\newtheorem{theorem}{\indent Theorem}[section]
\newtheorem{lemma}[theorem]{\indent Lemma}
\newtheorem{remark}{\indent Remark}[section]
\begin{document}
\renewcommand{\baselinestretch}{1.3}


\begin{center}
    {\large \bf   Normalized solutions of linearly coupled Choquard system with potentials}
\vspace{0.5cm}\\{\sc Meng  Li$^*$}\\  
\end{center}


\renewcommand{\theequation}{\arabic{section}.\arabic{equation}}
\numberwithin{equation}{section}


\begin{abstract}
In this paper, we consider the existence of solutions for  the linearly  coupled Choquard system with potentials
\begin{align*}
\left\{\begin{aligned}
&-\Delta u+\lambda_1 u+V_1(x)u=\mu_1(I_{\alpha}\star|u|^p)|u|^{p-2}u+\beta(x) v,\\
&-\Delta v+\lambda_2 v+V_2(x)v=\mu_2(I_{\alpha}\star|v|^q)|v|^{q-2}u+\beta(x) u,
\end{aligned}
\right.\quad x\in \mathbb{R}^N,
\end{align*}
under the constraint
\begin{align*}
\int_{\mathbb{R}^N}u^2dx=\xi^2,~ \int_{\mathbb{R}^N}v^2dx=\eta^2,
\end{align*}
where $I_{\alpha}=\frac{1}{|x|^{N-\alpha}},~\alpha\in(0,N),~1+\frac{\alpha}{N}<p,~q<\frac{N+\alpha}{N-2},~\mu_1>0,~\mu_2>0$ and $\beta(x)$ is a fixed function.\\
\textbf{Keywords:} Linearly  coupled Choquard system, normalized solutions, ground state solutions

\end{abstract}

\vspace{-1 cm}

\footnote[0]{ \hspace*{-7.4mm}
$^{*}$ Corresponding author.\\
AMS Subject Classification: 58F11,35Q30. \\
E-mails:  mengl@hust.edu.cn}
\section{Introduction}
In this paper, we consider the existence of solutions for  the linearly  coupled Choquard system with potentials
\begin{align}\label{system}
\left\{\begin{aligned}
&-\Delta u+\lambda_1 u+V_1(x)u=\mu_1(I_{\alpha}\star|u|^p)|u|^{p-2}u+\beta(x) v,\\
&-\Delta v+\lambda_2 v+V_2(x)v=\mu_2(I_{\alpha}\star|v|^q)|v|^{q-2}u+\beta(x) u,
\end{aligned}
\right.\quad x\in \mathbb{R}^N,
\end{align}
under the constraint
\begin{align}\label{constraint}
\int_{\mathbb{R}^N}u^2dx=\xi^2,~ \int_{\mathbb{R}^N}v^2dx=\eta^2,
\end{align}
where $I_{\alpha}=\frac{1}{|x|^{N-\alpha}},~\alpha\in(0,N),~1+\frac{\alpha}{N}<p,~q<\frac{N+\alpha}{N-2},~\mu_1>0,~\mu_2>0$ and $\beta(x)$ is a fixed function.

If $\beta(x)=0$ and $V_1(x)=V_2(x)=0$, then the system \eqref{system} can be reduced to the scalar Choquard equation
\begin{align}\label{ce}
-\Delta u+\lambda u=\mu(I_{\alpha}\star|u|^p)|u|^{p-2}u.
\end{align}
Physical motivations arise from the case $N = 3$ and $\alpha= 2$. In 1954, Pekar \cite{P54} described a polaron
at rest in the quantum theory. In 1976, to model an electron trapped in its own hole, Choquard \cite{L77} considered \eqref{ce} as an approximation to Hartree-Fock theory of one-component plasma. In particular cases, Penrose \cite{P96} investigated the selfgravitational collapse of a quantum mechanical wave function. The system of weakly coupled equations has been widely considered in recent years and it has applications especially in nonlinear optics \cite{M87,M89}. Furthermore, nonlocal nonlinearities have attracted considerable interest
as a means of eliminating collapse and stabilizing multidimensional solitary waves. It appears naturally
in optical systems \cite{L66} and is known to influence the propagation of electromagnetic waves in plasmas \cite{BC00}.

In \eqref{system}, if $\lambda_1$ and $\lambda_2$ are given, some literature call it as fixed frequency problem.  There are some related articles. In \cite{Y19}, the authors
studied the existence and nonexistence results for a class of linearly coupled Choquard system \eqref{system} with $V_1(x)=V_2(x)=0$ for critical case. In \cite{XMX20},
the authors were concerned with the following linearly coupled Choquard-type system
\begin{align*}
\left\{\begin{aligned}
&-\Delta u+\lambda_1 u=\mu_1(I_{\alpha}\star F(u))F'(u)+\beta v,\\
&-\Delta v+\lambda_2 v=\mu_2(I_{\alpha}\star F(v))F'(v)+\beta u,
\end{aligned}
\right.\quad x\in \mathbb{R}^N.
\end{align*}
Under some suitable conditions on $F'$, the existence of vector solutions was proved by using variational methods. Moreover, they proved the asymptotic behavior of the solutions as $\beta\to0^{+}$.

Condition \eqref{constraint}  is called as the normalization condition, which imposes a normalization on the $L^2$-masses of $u$ and $v$. The solutions to the Schr\"odiger system \eqref{system} under the constraint \eqref{constraint} are usually referred as normalized solutions. In order to obtain the solution to the fractional Schr\"odiger system \eqref{system}  satisfying the normalization condition \eqref{constraint}, one need to consider
the critical point of the functional $E_{V}(u,v)$ on  $S_{\xi}\times S_{\eta}$(see \eqref{fctl} and \eqref{l2sc}). And then, $\lambda_1$ and $\lambda_2$ appear as Lagrange multipliers with respect to the mass constraint, which cannot be determined a priori, but are part of the unknown. Some literature called this problem as fixed mass problem.

Recently, the normalized solutions of nonlinear
Schr\"{o}dinger equations and systems has attracted many researchers, see more details \cite{BC13,BJ18,BJN16,B20,BRRV21,BZZ20,CZ21,GJ18,J97,J2008,J20,LZ21,S20,SN20,W21,YZ21}. In particular, for the normalized solutions of Choquard equations, there are many papers such as \cite{JL21,LL20,LY14,Y16,YC20}. For the normalized solutions of nonlinearly coupled Choquard system, there are many papers, for example \cite{WJ21,WY18}.

Up to our knowledge, there is no paper about the normalized solutions of linearly coupled Choquard system. In our paper, we first consider the normalized solutions of linearly coupled Choquard system \eqref{system} without potentials, and then consider the normalized solutions of linearly coupled Choquard system \eqref{system} with different potentials.

The corresponding functional of \eqref{system} is
\begin{align}\label{fctl}
\begin{split}
E_{V}(u,v)=&\frac12\int_{\mathbb{R}^N}(|\nabla u|^2+|\nabla v|^2)dx+\frac12\int_{\mathbb{R}^N}(V_1(x)|u|^2+V_2(x)|v|^2)dx
\\&-\frac{\mu_1}{2p}\int_{\mathbb{R}^N}\int_{\mathbb{R}^N}\frac{|u(x)|^{p}| u(y)|^{p}}{|x-y|^{N-\alpha}}dydx
-\frac{\mu_2}{2q}\int_{\mathbb{R}^N}\int_{\mathbb{R}^N}\frac{|u(x)|^{q}| u(y)|^{q}}{|x-y|^{N-\alpha}}dydx\\
&-\int_{\mathbb{R}^N}\beta(x)uvdx.
\end{split}
\end{align}
If $V_1(x)=V_2(x)=0$, then we denote that $E_{V}(u,v)=E(u,v)$.
Denote the $L^2$-sphere
\begin{align}\label{l2sc}
S_{\xi}=\{u\in H^1(\mathbb{R}^N):\|u\|_2^2=\xi^2\}\text{~and~}S_{\eta}=\{u\in H^1(\mathbb{R}^N):\|u\|_2^2=\eta^2\}.
\end{align}

First, we consider the linearly coupled Choquard system \eqref{system} without potentials, i.e.,  $V_1(x)=V_2(x)=0$.

If $1+\frac{\alpha}{N}<p,q<1+\frac{\alpha+2}{N}$, then the Choquard terms are $L^2$-subcritical and $E(u,v)$ is bounded from below on $S_{\xi}\times S_{\eta}$. Let $\beta=\beta(x)$ be a fixed constant, we focus on the problem
\begin{align}\label{problem}
e(\xi,\eta):=\inf_{S_{\xi}\times S_{\eta}}E(u,v)
\end{align}
\begin{theorem}\label{t1}
Suppose that $\beta>0$, $V_1(x)=V_2(x)=0$ and $1+\frac{\alpha}{N}<p,q<1+\frac{\alpha+2}{N}$. Then there exists a solution $(u,v,\lambda_1,\lambda_2)$ of \eqref{system}, where $u$ and $v$ are positive and radial.
\end{theorem}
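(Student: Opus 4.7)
The plan is to realize $(u,v)$ as a minimizer of $e(\xi,\eta)=\inf_{S_\xi\times S_\eta}E(u,v)$, with $\lambda_1, \lambda_2$ emerging as Lagrange multipliers, and then to upgrade this minimizer to a positive radial solution. First I would verify that $e(\xi,\eta)$ is finite and that minimizing sequences are bounded. In the $L^2$-subcritical range, the Hardy--Littlewood--Sobolev and Gagliardo--Nirenberg inequalities give
\begin{align*}
\int_{\mathbb{R}^N}\!\!\int_{\mathbb{R}^N}\frac{|u(x)|^p|u(y)|^p}{|x-y|^{N-\alpha}}\,dxdy\leq C\|u\|_2^{2p(1-\gamma_p)}\|\nabla u\|_2^{2p\gamma_p}
\end{align*}
with $2p\gamma_p<2$ (and symmetrically for $q$); since $|\beta\int uv|\leq\beta\xi\eta$ is bounded on $S_\xi\times S_\eta$, $E$ is coercive and bounded below there. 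Evaluating $E$ on the rescaled pair $(t^{N/2}u(tx),t^{N/2}v(tx))$ with $u,v>0$ radial, the gradient terms scale like $t^2$, the Choquard terms like $t^{Np-N-\alpha}$ and $t^{Nq-N-\alpha}$ (both with positive exponent in the subcritical range), and $\int uv$ is $t$-invariant; hence $E\to-\beta\int uv<0$ as $t\to 0^+$ and $e(\xi,\eta)<0$.

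Next, replace any minimizing sequence by its Schwarz symmetrization: the $L^2$ norms are preserved, Dirichlet integrals cannot increase (P\'olya--Szeg\H{o}), the Choquard integrals cannot decrease (Riesz rearrangement), and because $\beta>0$ is a constant $\int u^*v^*\geq\int uv$. We may therefore assume that $(u_n,v_n)$ is nonnegative, radial, and nonincreasing. By coercivity it is bounded in $H^1_{\mathrm{rad}}(\mathbb{R}^N)^2$, so along a subsequence $u_n\rightharpoonup u_0$ and $v_n\rightharpoonup v_0$ weakly in $H^1$, strongly in $L^r$ for every $2<r<2N/(N-2)$ by Strauss's radial compactness, and a.e.~on $\mathbb{R}^N$. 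Since $2<\tfrac{2Np}{N+\alpha}<\tfrac{2N}{N-2}$ in the subcritical regime, HLS together with strong $L^r$ convergence yields convergence of the two Choquard integrals to their limits at $(u_0,v_0)$, while weak lower semicontinuity controls the Dirichlet integrals.

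The main obstacle is excluding loss of $L^2$ mass, since radial $H^1$ does \emph{not} embed compactly into $L^2$. Setting $r_n:=u_n-u_0$ and $s_n:=v_n-v_0$ with $r_n,s_n\rightharpoonup 0$ in $H^1$, a Br\'ezis--Lieb decomposition combined with the above convergence of the Choquard terms gives
\begin{align*}
e(\xi,\eta)=E(u_0,v_0)+\liminf_{n\to\infty}\Bigl[\tfrac12\|\nabla r_n\|_2^2+\tfrac12\|\nabla s_n\|_2^2-\beta\int_{\mathbb{R}^N}r_n s_n\,dx\Bigr],
\end{align*}
with $\|u_0\|_2^2+\lim\|r_n\|_2^2=\xi^2$ and $\|v_0\|_2^2+\lim\|s_n\|_2^2=\eta^2$. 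Writing $(\xi_0,\eta_0):=(\|u_0\|_2,\|v_0\|_2)$ and $(\xi',\eta'):=(\lim\|r_n\|_2,\lim\|s_n\|_2)$, I would establish the strict subadditivity $e(\xi,\eta)<e(\xi_0,\eta_0)+e(\xi',\eta')$ for every nontrivial splitting $\xi^2=\xi_0^2+\xi'^2$, $\eta^2=\eta_0^2+\eta'^2$. This reduces to a rescaling argument: since $p,q>1$, enlarging the $L^2$-mass of an approximate minimizer strictly decreases $e$, and $\beta>0$ produces an extra attractive gain when two pieces are combined. Together with the trivial bound $E(u_0,v_0)\geq e(\xi_0,\eta_0)$ and a matching lower bound for the remainder in terms of $e(\xi',\eta')$, one is forced to $(\xi',\eta')=(0,0)$, so $\|u_0\|_2=\xi$, $\|v_0\|_2=\eta$, and $(u_0,v_0)$ attains $e(\xi,\eta)$.

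Lagrange's multiplier rule on $S_\xi\times S_\eta$ then furnishes $\lambda_1,\lambda_2\in\mathbb{R}$ with $(u_0,v_0,\lambda_1,\lambda_2)$ a solution of \eqref{system}. Radiality is built into the construction; since $u_0,v_0\geq 0$ and they are not identically zero (because $E(u_0,v_0)=e(\xi,\eta)<0$), the strong maximum principle applied componentwise, using $\beta v_0\geq 0$ and $\beta u_0\geq 0$ as nonnegative forcing terms, upgrades them to $u_0,v_0>0$ on $\mathbb{R}^N$.
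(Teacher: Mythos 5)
Your overall strategy coincides with the paper's: minimize $E$ on $S_\xi\times S_\eta$, pass to Schwarz-symmetrized radial minimizing sequences, use radial compactness in $L^r$ for $2<r<2^*$ to pass to the limit in the Choquard terms, and rule out loss of $L^2$-mass by an energy-splitting argument. The coercivity, symmetrization, a.e.\ convergence, Lagrange multiplier and maximum-principle steps are all fine and match the paper.

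The gap is in the pivotal compactness step. You assert the \emph{strict} subadditivity $e(\xi,\eta)<e(\xi_0,\eta_0)+e(\xi',\eta')$ for \emph{every} nontrivial splitting $\xi^2=\xi_0^2+\xi'^2$, $\eta^2=\eta_0^2+\eta'^2$, and claim it "reduces to a rescaling argument." For a two-parameter constraint this does not follow from the scalar-type rescaling you invoke: dilating or multiplying $(u,v)$ by a common factor only compares $e$ along rays $(\theta\xi,\theta\eta)$, so it handles proportional splittings but says nothing about splittings in which the two components lose mass in different ratios, nor about the degenerate splittings $(\xi',0)$ or $(0,\eta')$ where only one component leaks. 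The paper proves only the \emph{weak} subadditivity $e(\xi,\eta)\le e(\xi_1,\eta_1)+e(\xi_2,\eta_2)$ (by disjoint supports, Lemma \ref{lsubadd}) and obtains the needed strictness from a different source, one you actually have in hand but do not exploit: since the remainder $(r_n,s_n)$ converges strongly in $L^{2Np/(N+\alpha)}$ by radial compactness, its Choquard energy vanishes, so $E(r_n,s_n)\ge-\beta\xi'\eta'+o(1)$; on the other hand, testing $e(\xi',\eta')$ with the pair $(u,\tfrac{\eta'}{\xi'}u)$ built from a minimizer of the scalar functional $F_{\mu_1/2}$ gives $e(\xi',\eta')\le 2m(\xi',\mu_1/2)-\beta\xi'\eta'<-\beta\xi'\eta'$ (and $e(0,\eta')=m(\eta',\mu_2)<0$ in the degenerate case). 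Hence the remainder's energy is \emph{strictly} larger than $e(\xi',\eta')$, and combining this strict inequality with the weak subadditivity already yields the contradiction. You should replace the unproved strict subadditivity by this comparison; as written, that step would not survive scrutiny.
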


If $1+\frac{\alpha+2}{N}<p,~q<\frac{N+\alpha}{N-2}$, then the Choquard terms are $L^2$-supercritical and $E(u,v)$ is unbounded from below on $S_{\xi}\times S_{\eta}$. Therefore, \eqref{problem} doesn't work. We adopt mountain pass structure to obtain the solutions of \eqref{system}. Denote that $\delta_{p}=\frac{N(p-1)-\alpha}{2p}$ and
\begin{align}\label{h}
h(x)=\frac{1}{2}x-\frac{C}{2p}x^{p\delta_{p}}.
\end{align}
\begin{theorem}\label{t2}
Assume that $1+\frac{\alpha+2}{N}<p=q<\frac{N+\alpha}{N-2}$ and $V_1(x)=V_2(x)=0$. Let $\beta(x)$ satisfy
\begin{itemize}
  \item [(i)] $\beta(x)=\beta(|x|)>0$,
  \item [(ii)] $\beta(x),~x\cdot\nabla \beta(x)$ are bounded, moreover, $\|\beta(x)\|_{\infty}< \frac{\max\{h(x)\}}{2\xi\eta}$,
  \item [(iii)] $2\beta(x)+\frac{x\cdot\nabla \beta(x)}{\delta_{p}}\ge0$.
\end{itemize}
Then there exists a solution $(u,v,\lambda_1,\lambda_2)$ of \eqref{system}, where $u$ and $v$ are positive and radial for some $\lambda_1,\lambda_2>0$.
\end{theorem}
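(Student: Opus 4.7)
The plan is to adapt Jeanjean's mountain-pass-with-Pohozaev strategy to the coupled setting, restricting from the outset to the radial subspace $H^1_{\mathrm{rad}}(\mathbb{R}^N)\times H^1_{\mathrm{rad}}(\mathbb{R}^N)$, where hypothesis (i) keeps the problem invariant and where compact embeddings into $L^{2p}$ are available. The basic device is the $L^2$-preserving dilation $u_s(x)=e^{Ns/2}u(e^s x)$. A change of variables yields
\[
E(u_s,v_s)=\frac{e^{2s}}{2}\bigl(\|\nabla u\|_2^2+\|\nabla v\|_2^2\bigr)-\frac{e^{2sp\delta_p}}{2p}\bigl(\mu_1 D(u)+\mu_2 D(v)\bigr)-\int_{\mathbb{R}^N}\beta(e^{-s}x)\,uv\,dx,
\]
with $D(w):=\iint |w(x)|^p|w(y)|^p|x-y|^{\alpha-N}dxdy$. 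Since $p>1+(\alpha+2)/N$ forces $p\delta_p>1$, the Choquard terms dominate for large $s$ while the Dirichlet term dominates for small $s$.

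To exhibit mountain-pass geometry on $S_\xi\times S_\eta$, I combine Hardy--Littlewood--Sobolev with Gagliardo--Nirenberg to obtain $D(w)\le C\|\nabla w\|_2^{2p\delta_p}\|w\|_2^{2p(1-\delta_p)}$, so that $E(u,v)$ is bounded above by $h(\|\nabla u\|_2^2)+h(\|\nabla v\|_2^2)+\|\beta\|_\infty\xi\eta$ on the constraint. The hypothesis $\|\beta\|_\infty<\max h/(2\xi\eta)$ is precisely what guarantees a strict positive barrier between a small inner sphere and a point of much higher $\|\nabla\cdot\|_2$ where the energy has been pushed below zero by the dominant Choquard term. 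I then apply Jeanjean's trick to the augmented functional $\widetilde E(u,v,s):=E(u_s,v_s)$ on $(S_\xi\times S_\eta)\times\mathbb{R}$; this yields a Palais--Smale sequence $(u_n,v_n)$ at the mountain-pass level $c$ satisfying the additional Pohozaev-type condition $P(u_n,v_n)\to0$, where
\[
P(u,v):=\|\nabla u\|_2^2+\|\nabla v\|_2^2-\delta_p\bigl(\mu_1 D(u)+\mu_2 D(v)\bigr)+\int_{\mathbb{R}^N}(x\cdot\nabla\beta)\,uv\,dx
\]
is $\partial_s\widetilde E|_{s=0}$.

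Boundedness of the PS sequence then follows from the combination $E-\frac{1}{2p\delta_p}P$, which cancels the Choquard contributions and leaves $\tfrac12(1-\tfrac{1}{p\delta_p})(\|\nabla u\|_2^2+\|\nabla v\|_2^2)$ plus a coupling term controlled by $(\|\beta\|_\infty+\tfrac{1}{2p\delta_p}\|x\cdot\nabla\beta\|_\infty)\xi\eta$ via hypothesis (ii). Because $p\delta_p>1$, gradients are bounded; testing the constrained Euler equation $E'(u_n,v_n)-\lambda_{1,n}u_n-\lambda_{2,n}v_n\to 0$ against $u_n,v_n$ then bounds the Lagrange multipliers. Extracting subsequences in $H^1_{\mathrm{rad}}$, the compact embedding $H^1_{\mathrm{rad}}(\mathbb{R}^N)\hookrightarrow L^{2p}(\mathbb{R}^N)$ (valid since $2<2p<2^*$ in the assumed range) together with HLS forces strong convergence of the Choquard nonlinearities, and standard manipulations produce a limit $(u,v,\lambda_1,\lambda_2)$ solving the system with $P(u,v)=0$.

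For the sign conclusions, multiplying the two equations by $u,v$ respectively, summing, and eliminating the Choquard terms through $P(u,v)=0$ gives
\[
\lambda_1\xi^2+\lambda_2\eta^2=\bigl(\tfrac{1}{\delta_p}-1\bigr)\bigl(\|\nabla u\|_2^2+\|\nabla v\|_2^2\bigr)+\int_{\mathbb{R}^N}\bigl[2\beta+\tfrac{1}{\delta_p}(x\cdot\nabla\beta)\bigr]uv\,dx,
\]
which is strictly positive because $\delta_p<1$ for $p<(N+\alpha)/(N-2)$ and because of condition (iii); passing to $(|u|,|v|)$ does not increase the energy since $\beta>0$, and the strong maximum principle gives strict positivity. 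The main obstacle is the simultaneous boundedness/compactness step: the linear coupling $\int\beta\,uv$ intertwines the two components, so the dilation has to be applied synchronously in both slots and the PS analysis executed jointly. A secondary difficulty, which may require squeezing more out of conditions (i)--(iii), is separating the signs of $\lambda_1$ and $\lambda_2$ individually rather than only controlling the positive combination displayed above.
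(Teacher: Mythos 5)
Your overall strategy coincides with the paper's: the dilation $s\star(u,v)$, the mountain pass geometry controlled by $h$ and the smallness of $\|\beta\|_\infty$, Jeanjean's augmented functional to produce a Palais--Smale sequence with the Pohozaev-type condition $P(u_n,v_n)\to 0$, boundedness from the combination of $E$ and $P$, and the identity for $\lambda_1\xi^2+\lambda_2\eta^2$ via condition (iii). However, there is a genuine gap at the compactness step, and it is exactly the step you flag as a ``secondary difficulty'' without resolving. The compact embedding $H^1_{\mathrm{rad}}\hookrightarrow L^{2p}$ gives strong convergence of the Choquard terms and lets the weak limit $(u,v)$ solve the system, but it does \emph{not} give strong convergence in $L^2$, so the limit need not satisfy $\|u\|_2^2=\xi^2$, $\|v\|_2^2=\eta^2$; ``standard manipulations'' do not produce a normalized solution here. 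In the paper this is repaired precisely by the signs of the multipliers: if $\lambda_1>0$, then passing to the limit in $\|\nabla \bar u_n\|_2^2+\lambda_{1n}\|\bar u_n\|_2^2$ together with weak lower semicontinuity forces $\bar u_n\to\bar u$ strongly in $H^1$, and likewise for $v$. So the sign analysis must precede, not follow, the claim of strong convergence, and your identity only yields that the \emph{combination} $\lambda_1\xi^2+\lambda_2\eta^2$ is positive, i.e.\ that at least one multiplier is positive.

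The missing idea for separating the signs is the non-existence of semi-trivial solutions. The paper argues: suppose $\lambda_2\le 0$; then the (nonnegative, radial) limit $\bar v$ satisfies $-\Delta\bar v=-\lambda_2\bar v+\mu_2(I_\alpha\star|\bar v|^p)|\bar v|^{p-2}\bar v+\beta(x)\bar u\ge 0$, and a Liouville-type theorem forces $\bar v\equiv 0$; but then the second equation reduces to $\beta(x)\bar u=0$, which is impossible since $\beta>0$ and $\bar u\not\equiv 0$ (the lower bound $\|\nabla\bar u_n\|_2^2+\|\nabla\bar v_n\|_2^2\ge \bar C>0$ plus the already-established strong convergence of the first component prevents $\bar u=0$). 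Hence $\lambda_2>0$ and the second component also converges strongly. Without this argument, or some substitute for it, your proof does not close: you would only obtain a solution of the differential system with possibly reduced masses, not a solution of the constrained problem.
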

\begin{remark}
In order to simplicity some notations, we only consider $p=q$. But for $p\neq q$, we just do some technical generalization.
\end{remark}

Next, we consider the linearly coupled Choquard system \eqref{system} with different potentials.

For the $L^2$-subcritical case, i.e, $1+\frac{\alpha}{N}<p,~q<1+\frac{\alpha+2}{N}$, we give two different potentials:
\begin{itemize}
  \item \textbf{(V1)}:$V(x)\in C(\mathbb{R}^N,\mathbb{R})$, $\lim\limits_{|x|\rightarrow\infty}V(x)=0$ and $V(x)<0$ for all $x\in\mathbb{R}^N$;
  \item \textbf{(V2)}:$V(x)\in C(\mathbb{R}^N,\mathbb{R})$ and $\lim\limits_{|x|\rightarrow\infty}V(x)=\infty$.
\end{itemize}

\begin{theorem}\label{tv1}
Assume that $V_1(x)$ and $V_2(x)$  satisfy \textbf{(V1)}, $\max\{1+\frac{\alpha}{N},2\}<p,q<1+\frac{\alpha+2}{N}$ and $\beta>0$. Then there exists a ground state solution for $(u,v,\lambda_1,\lambda_2)\in H^1(\mathbb{R}^N)\times H^1(\mathbb{R}^N)\times\mathbb{R}\times\mathbb{R}$ of the system  \eqref{system}, where $u$ and $v$ are positive.
\end{theorem}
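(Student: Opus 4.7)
The plan is to realize the ground state as a minimizer of $e_V(\xi,\eta):=\inf_{S_\xi\times S_\eta}E_V(u,v)$ and to extract $\lambda_1,\lambda_2$ from the Lagrange multiplier rule. Coercivity and boundedness below of $E_V$ on $S_\xi\times S_\eta$ come from Hardy--Littlewood--Sobolev and Gagliardo--Nirenberg, which bound each nonlocal term by $C\xi^{2p(1-\delta_p)}\|\nabla u\|_2^{2p\delta_p}$ with $2p\delta_p<2$ in the $L^2$-subcritical regime $p,q<1+\frac{\alpha+2}{N}$, together with boundedness of $V_1,V_2$ (inherited from continuity and the vanishing in (V1)) and the crude estimate $\int\beta uv\,dx\le\beta\xi\eta$. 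The decisive ingredient is the strict comparison
\[
e_V(\xi,\eta)<e(\xi,\eta),
\]
which is what prevents mass from leaking to infinity where the potential vanishes. To prove it, I would invoke Theorem \ref{t1} (applicable since $1+\frac{\alpha}{N}<p,q<1+\frac{\alpha+2}{N}$ and $\beta>0$) to obtain a positive minimizer $(u_0,v_0)$ of $e(\xi,\eta)$; then, using $V_1,V_2<0$ pointwise,
\[
e_V(\xi,\eta)\le E_V(u_0,v_0)=e(\xi,\eta)+\tfrac12\!\int_{\mathbb{R}^N}\!(V_1 u_0^2+V_2 v_0^2)\,dx<e(\xi,\eta).
\]

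For a minimizing sequence $(u_n,v_n)\subset S_\xi\times S_\eta$, coercivity yields boundedness in $H^1\times H^1$ and weak limits $(u,v)$. Weak continuity of the potential integrals follows by splitting $\mathbb{R}^N=B_R\cup B_R^c$ (Rellich on $B_R$, $\sup_{|x|\ge R}|V_i|\to 0$ on $B_R^c$). For the nonlocal Choquard terms, the Brezis--Lieb type decomposition of Moroz--Van Schaftingen (facilitated by the hypothesis $\max\{1+\frac{\alpha}{N},2\}<p,q$) gives
\[
E_V(u_n,v_n)=E_V(u,v)+E(u_n-u,v_n-v)+o(1).
\]
Setting $\xi_0=\|u\|_2$, $\eta_0=\|v\|_2$, $\xi'=\sqrt{\xi^2-\xi_0^2}$, $\eta'=\sqrt{\eta^2-\eta_0^2}$, and combining with weak lower semicontinuity, this yields the key lower bound
\[
e_V(\xi,\eta)\ge e_V(\xi_0,\eta_0)+e(\xi',\eta').
\]

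The hard step, and the main obstacle, is ruling out dichotomy, i.e., $(\xi_0,\eta_0)\ne(\xi,\eta)$. If $\xi_0=\eta_0=0$ the bound degenerates to $e_V(\xi,\eta)\ge e(\xi,\eta)$, contradicting the strict comparison established in the first paragraph. Otherwise, I would exploit the strict subadditivity of $e_V$ and $e$ in the $L^2$-subcritical regime (a standard consequence of the scaling inequality $e_V(t\xi,t\eta)<t^2 e_V(\xi,\eta)$ for $t>1$, and likewise for $e$) together with $e_V\le e$ to contradict the displayed lower bound. Forcing $\xi_0=\xi$ and $\eta_0=\eta$ implies $(u_n,v_n)\to(u,v)$ strongly in $L^2\times L^2$; weak $H^1$ convergence plus lower semicontinuity then upgrade this to $E_V(u,v)=e_V(\xi,\eta)$. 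Replacing $(u,v)$ by $(|u|,|v|)$ does not raise the energy (since $\beta>0$), so one may assume $u,v\ge 0$; the Lagrange multiplier rule produces $\lambda_1,\lambda_2\in\mathbb{R}$, and the strong maximum principle applied to the coupled system gives $u,v>0$.
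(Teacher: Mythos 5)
Your overall framework (minimize $e_V$ on $S_\xi\times S_\eta$, prove coercivity, compare $e_V<e$ strictly, decompose a minimizing sequence via Br\'ezis--Lieb, and rule out dichotomy) matches the paper's, and the first two thirds of your argument --- coercivity, the strict inequality $e_V(\xi,\eta)<e(\xi,\eta)$ via a minimizer of $e(\xi,\eta)$ and $V_i<0$, and the lower bound $e_V(\xi,\eta)\ge e_V(\xi_0,\eta_0)+e(\xi',\eta')$ --- are sound. The gap is in the step you yourself flag as the hard one: excluding dichotomy when $(\xi_0,\eta_0)\neq(0,0)$. Strict subadditivity of the form $e_V(\xi,\eta)<e_V(\xi_0,\eta_0)+e(\xi',\eta')$ for \emph{arbitrary} splittings $\xi^2=\xi_0^2+\xi'^2$, $\eta^2=\eta_0^2+\eta'^2$ is \emph{not} a standard consequence of the one-parameter scaling inequality $e_V(t\xi,t\eta)<t^2e_V(\xi,\eta)$: that inequality only controls proportional splittings. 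It says nothing about the semi-trivial configurations such as $(\xi',\eta')=(0,\eta')$ (i.e.\ only $v$ loses mass), where no common scaling factor relates $(\xi_0,\eta_0)$ to $(\xi,\eta)$, and scaling a single component fails because the coupling term $\int\beta uv$ is only linear in each variable while $B(v,q)$ is $2q$-homogeneous. Your argument therefore does not close in precisely the cases that carry the difficulty.

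The paper closes these cases by a different mechanism, which is why its splitting lemma (Lemma \ref{sub-lsplit}) is stated so as to produce an \emph{attained} decomposition: the weak limit $(u_0,v_0)$ realizes $e_V(\xi_1,\eta_1)$, the escaping profile $(\varphi_0,\psi_0)$ realizes $e(\xi_2,\eta_2)$ and solves the potential-free system \eqref{varphi0psi0}. The semi-trivial splittings (Cases 2 and 3) are then excluded because $(0,\psi_0)$ cannot solve a linearly coupled system with $\beta>0$; the remaining case is excluded by gluing the two attained profiles at the same location, testing with $(\sqrt{u_0^2+\varphi_0^2},\sqrt{v_0^2+\psi_0^2})$, and using the strict negativity of $V_1,V_2$ on the support of the escaping profile to beat the split energy. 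This gluing step is also where the hypothesis $p,q>\max\{1+\frac{\alpha}{N},2\}$ is actually consumed (it yields $|\sqrt{u_0^2+\varphi_0^2}|^p\ge|u_0|^p+|\varphi_0|^p$ and hence superadditivity of $B(\cdot,p)$); your proposal never uses $p,q>2$, which is a signal that the decisive ingredient is missing. To repair your argument you would need either to prove the general (non-proportional) strict subadditivity directly, or to import the attainment-plus-gluing and no-semi-trivial-solution arguments.
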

We define
\begin{align*}
\tilde{H}_{i}:=\{u\in H^1(\mathbb{R}^N):\int_{\mathbb{R}^N}V_{i}(x)|u|^2dx<\infty\}
\end{align*}
for $i=1,2$.
\begin{theorem}\label{tv2}
Assume that $V_1(x)$ and $V_2(x)$  satisfy \textbf{(V2)}, $\max\{1+\frac{\alpha}{N},2\}<p,q<1+\frac{\alpha+2}{N}$ and $\beta>0$. Then there exists a ground state solution $(u,v,\lambda_1,\lambda_2)\in \tilde{H}_{1}\times\tilde{H}_{2}\times\mathbb{R}\times\mathbb{R}$ for the system  \eqref{system}, where $u$ and $v$ are positive.
\end{theorem}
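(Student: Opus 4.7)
The plan is to obtain the ground state directly as a minimizer of
\[e_V(\xi,\eta) := \inf\{E_V(u,v) : (u,v) \in (S_\xi \cap \tilde{H}_1) \times (S_\eta \cap \tilde{H}_2)\},\]
exploiting the coercivity of $V_1,V_2$ to restore the compactness that is otherwise lost on $\mathbb{R}^N$. First, I would show $e_V(\xi,\eta)\in\mathbb{R}$ and that every minimizing sequence is bounded in $\tilde{H}_1\times\tilde{H}_2$. Boundedness from below follows from the Hardy--Littlewood--Sobolev inequality combined with the Gagliardo--Nirenberg interpolation: since $p,q<1+\frac{\alpha+2}{N}$, one has $2p\delta_p=N(p-1)-\alpha<2$, so the Choquard terms are absorbed into an arbitrarily small fraction of the kinetic energy up to constants depending on the masses $\xi,\eta$. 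The linear coupling contributes at most $\|\beta\|_\infty\xi\eta$, while \textbf{(V2)} together with continuity gives $V_i\ge -C_i$ globally on $\mathbb{R}^N$. Coercivity of $E_V$ on the constraint then yields the uniform $\tilde{H}_1\times\tilde{H}_2$ bound on minimizing sequences.

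The decisive step is the compact embedding $\tilde{H}_i\hookrightarrow L^s(\mathbb{R}^N)$ for every $s\in[2,2^*)$, a classical consequence of \textbf{(V2)}: after shifting $V_i$ to be $\ge 1$, one controls mass on complements of balls by $\int_{|x|\ge R}|u|^2 \le (\inf_{|x|\ge R} V_i)^{-1}\int V_i|u|^2$, while inside the ball Rellich--Kondrachov provides local compactness. Extracting a weakly convergent subsequence $(u_n,v_n)\rightharpoonup(u,v)$ in $\tilde{H}_1\times\tilde{H}_2$, the embedding promotes this to strong $L^s$ convergence for all $s\in[2,2^*)$. In particular, the mass is preserved so $(u,v)\in S_\xi\times S_\eta$, the linear coupling $\int\beta u_n v_n\to\int\beta uv$, and choosing $s=\frac{2Np}{N+\alpha}\in(2,2^*)$ (this lies in the admissible range precisely thanks to $p<1+\frac{\alpha+2}{N}$), HLS delivers convergence of both Choquard terms. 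Combined with the weak lower semicontinuity of the quadratic forms $\int(|\nabla u|^2+V_1|u|^2)$ and its $v$-analog, one obtains $E_V(u,v)\le\liminf_n E_V(u_n,v_n)=e_V(\xi,\eta)$, so $(u,v)$ is a minimizer.

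A standard Lagrange multiplier argument on the $C^1$ constraint manifold $S_\xi\times S_\eta$ then produces $\lambda_1,\lambda_2\in\mathbb{R}$ making $(u,v,\lambda_1,\lambda_2)$ a weak solution of \eqref{system}, which elliptic regularity upgrades to a classical solution. Replacing $(u,v)$ by $(|u|,|v|)$ is admissible because $\beta(x)>0$ ensures $\int\beta|u||v|\ge\int\beta uv$ while every other term of $E_V$ is invariant under taking absolute values; the strong maximum principle applied to each component equation then yields strict positivity. The principal technical obstacle I anticipate is the justification of the compact embedding $\tilde{H}_i\hookrightarrow L^s(\mathbb{R}^N)$ under only \textbf{(V2)} (with $V_i$ possibly negative on a bounded set): one needs to install the correct Hilbert structure on $\tilde{H}_i$ via a shift and verify that the resulting norm dominates the $H^1$-norm modulo a compact perturbation. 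Once this is done, the remaining steps --- passage to the limit in the Choquard terms, Lagrange multipliers, and positivity --- are routine and essentially parallel the proof of Theorem \ref{tv1}.
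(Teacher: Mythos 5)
Your proposal is correct and follows essentially the same route as the paper: minimize $E_V$ on the constraint in $\tilde H_1\times\tilde H_2$, use coercivity from Gagliardo--Nirenberg/Hardy--Littlewood--Sobolev with $2p\delta_p,2q\delta_q<2$, invoke the compact embedding $\tilde H_i\hookrightarrow L^s(\mathbb{R}^N)$ for $s\in[2,2^*)$ (the paper's Lemma \ref{lcompact}, cited from \cite{R93}) to pass to the limit in the mass constraint, the coupling term and the Choquard terms, and conclude by weak lower semicontinuity, followed by the standard Lagrange multiplier and positivity arguments. The only difference is that you flag and sketch the proof of the compact embedding, which the paper simply quotes.
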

\begin{theorem}\label{tv1v2}
Assume that $V_1(x)$ satisfies \textbf{(V1)} and $V_2(x)$ satisfies  \textbf{(V2)}, $\max\{1+\frac{\alpha}{N},2\}<p,q<1+\frac{\alpha+2}{N}$ and $\beta>0$. Then  the system \eqref{system} has a solution $(u,v,\lambda_1,\lambda_2)\in H^1(\mathbb{R}^N)\times\tilde{H}_{2}\times\mathbb{R}\times\mathbb{R}$, where $u$ and $v$ are positive.
\end{theorem}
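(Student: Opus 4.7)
My plan is to obtain $(u,v)$ as a minimizer of
\[
e_V(\xi,\eta):=\inf_{(u,v)\in S_\xi\times(S_\eta\cap\tilde H_2)} E_V(u,v)
\]
and to recover $\lambda_1,\lambda_2$ as the Lagrange multipliers. The mixed potentials can be treated in the most natural way: condition \textbf{(V2)} on $V_2$ supplies a compact embedding $\tilde H_2\hookrightarrow L^r(\mathbb R^N)$ for $r\in[2,2N/(N-2))$, which handles the $v$-component automatically (as in the proof of Theorem \ref{tv2}); while condition \textbf{(V1)} on $V_1$ (strict negativity pointwise, decay at infinity) is used, in the spirit of the proof of Theorem \ref{tv1}, to produce a strict subadditivity property that prevents loss of mass for the $u$-component.

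The first step is to verify that $E_V$ is bounded below and coercive on the constraint set. Using Hardy-Littlewood-Sobolev together with the Gagliardo-Nirenberg inequality, each Choquard term is dominated by $C\|\nabla\cdot\|_2^{2p\delta_p}\|\cdot\|_2^{2p(1-\delta_p)}$ with $p\delta_p<1$ by the subcritical assumption $p,q<1+(\alpha+2)/N$. Since $V_1$ is bounded (continuous, tending to $0$), $V_2$ is bounded below, and $|\int\beta uv\,dx|\le\beta\xi\eta$, any minimizing sequence $(u_n,v_n)$ is bounded in $H^1\times\tilde H_2$. Replacing $(u_n,v_n)$ by $(|u_n|,|v_n|)$, which does not increase $E_V$ since $\beta>0$ and $\mu_i>0$, one may assume $u_n,v_n\ge 0$; after passing to subsequences, $u_n\rightharpoonup u$ in $H^1$ and $v_n\rightharpoonup v$ in $\tilde H_2$. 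The compact embedding from \textbf{(V2)} forces $v_n\to v$ strongly in $L^r(\mathbb R^N)$ for admissible $r$, hence $\|v\|_2=\eta$, and both $\int V_2|v|^2\,dx$ and the $v$-Choquard term pass to the limit.

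The main obstacle is to prove $\|u\|_2=\xi$, i.e.\ that no mass of $u_n$ escapes to infinity. I would establish the strict subadditivity
\begin{equation*}
e_V(\xi,\eta) \;<\; e_V(\xi',\eta) + m_\infty\!\bigl(\sqrt{\xi^2-\xi'^2}\bigr),\qquad \xi'\in[0,\xi),
\end{equation*}
where $m_\infty(s):=\inf\{\tfrac12\|\nabla w\|_2^2-\tfrac{\mu_1}{2p}\iint\frac{|w(x)|^p|w(y)|^p}{|x-y|^{N-\alpha}}\,dx\,dy:\|w\|_2=s\}$ is the natural problem at infinity. This is the correct limit because $V_2\to\infty$ forces $v$ to stay localized, so any escaping $u$-mass decouples both from the $\beta$-coupling and (since $V_1\to 0$) from $V_1$. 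Strict inequality is achieved by taking a near-minimizer $(u^*,v^*)$ of $e_V(\xi',\eta)$ and a near-minimizer $w$ of $m_\infty(\sqrt{\xi^2-\xi'^2})$, then forming the competitor $(u^*+w(\cdot-y_R),v^*)$ with $y_R$ chosen so that the supports essentially separate while $V_1$ remains noticeably negative on the support of the shifted copy; the gain $\tfrac12\int V_1|w(\cdot-y_R)|^2\,dx<0$ dominates the vanishing cross terms, which is precisely where the strict negativity in \textbf{(V1)} is essential.

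Once strict subadditivity is in hand, a standard Brezis-Lieb decomposition together with $\int V_1|u_n-u|^2\,dx\to 0$ (because $V_1\to 0$ at infinity and $u_n-u\to 0$ in $L^2_{loc}$) and $\int\beta(u_n-u)v\,dx\to 0$ yields $e_V(\xi,\eta)\ge E_V(u,v)+m_\infty(\sigma)\ge e_V(\xi',\eta)+m_\infty(\sigma)$ with $\sigma^2=\xi^2-\|u\|_2^2$; if $\sigma>0$ this contradicts strict subadditivity, so $\|u\|_2=\xi$. Weak lower semicontinuity then makes $(u,v)$ a minimizer, and the Lagrange multiplier theorem supplies $\lambda_1,\lambda_2\in\mathbb R$ so that $(u,v,\lambda_1,\lambda_2)$ solves \eqref{system}. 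Positivity of $u,v$ follows from $u,v\ge 0$, elliptic regularity, and the strong maximum principle, since $\mu_i>0$ and $\beta>0$ keep each right-hand side nonnegative. The hardest step throughout is the strict subadditivity estimate; everything else amounts to careful Brezis-Lieb bookkeeping in the presence of both the Riesz convolution and the linear coupling.
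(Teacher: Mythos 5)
Your overall architecture matches the paper's: minimize $E_V$ on $S_\xi\times(S_\eta\cap\tilde H_2)$, use the compact embedding from \textbf{(V2)} to lock in the $v$-mass, and rule out escape of $u$-mass by comparison with the scalar problem at infinity $m_\infty(\cdot)=m(\cdot,\mu_1)$. However, there is a genuine gap in the one step you yourself identify as the hardest: your proposed proof of the strict subadditivity $e_V(\xi,\eta)<e_V(\xi',\eta)+m_\infty(\sqrt{\xi^2-\xi'^2})$ does not work as described. You build the competitor $(u^*+w(\cdot-y_R),v^*)$ with $y_R$ pushed out far enough that the supports separate, and you claim the gain $\tfrac12\int V_1|w(\cdot-y_R)|^2\,dx<0$ dominates the vanishing cross terms. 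But \textbf{(V1)} gives $V_1(x)\to 0$ as $|x|\to\infty$, so as $|y_R|\to\infty$ this "gain" tends to $0$ together with the cross terms, and there is no reason it dominates; moreover, if $w$ is only a near-minimizer within $\varepsilon$, the gain has no lower bound uniform in $\varepsilon$. Translating the bump to infinity is exactly the wrong move here: the negativity of $V_1$ is only quantitatively useful at \emph{finite} locations.

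The inequality you want is true, but it must be obtained differently, and this is where the paper's proof diverges from yours. The paper proves only the \emph{non-strict} subadditivity $e_V(\xi,\eta)\le e_V(\xi_1,\eta)+m(\xi-\xi_1,\mu_1)$ (Lemma \ref{lssub}), then establishes a splitting lemma (Lemma \ref{lssplit}) showing that if $\|u_0\|_2=\xi_1<\xi$, the escaping profile $w_0\neq 0$ and the weak limit $(u_0,v_0)$ actually \emph{attain} $m(\xi_2,\mu_1)$ and $e_V(\xi_1,\eta)$ respectively, with $e_V(\xi,\eta)=e_V(\xi_1,\eta)+m(\xi_2,\mu_1)$. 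Only then does strictness enter, via the recombined test function $(\sqrt{u_0^2+w_0^2},v_0)\in S_\xi\times S_\eta$: the pointwise inequalities $|\nabla\sqrt{u_0^2+w_0^2}|^2\le|\nabla u_0|^2+|\nabla w_0|^2$ and $B(\sqrt{u_0^2+w_0^2},p)\ge B(u_0,p)+B(w_0,p)$ give $E_V(\sqrt{u_0^2+w_0^2},v_0)\le e_V(\xi_1,\eta)+m(\xi_2,\mu_1)+\tfrac12\int V_1 w_0^2\,dx$, and now $\int V_1w_0^2\,dx<0$ is a \emph{fixed} strictly negative number because $w_0$ sits at a finite location where $V_1<0$ pointwise. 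This yields $e_V(\xi,\eta)<e_V(\xi,\eta)$, the desired contradiction. If you prefer to keep your "strict subadditivity first" organization, the fix is the same device: take $w$ an \emph{achieved} minimizer of $m(\sigma,\mu_1)$ (existence is part of Lemma \ref{yt1} and its references) and a near-minimizer $(u^*,v^*)$ of $e_V(\xi',\eta)$, and use the competitor $(\sqrt{(u^*)^2+w^2},v^*)$, which respects the mass constraints exactly and carries the fixed gain $\tfrac12\int V_1w^2\,dx<0$ independent of the approximation parameter. The remainder of your argument (Brezis--Lieb bookkeeping, Lagrange multipliers, maximum principle) is fine.
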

\begin{remark}
In the above theorem, if $V_1(x)$ satisfies \textbf{(V2)} and $V_2(x)$ satisfies  \textbf{(V1)}, then the theorem still holds.
\end{remark}

In this paper, the essential difficulty is to obtain the $L^2$-strong convergence of critical sequence due to the constraint \eqref{constraint}.

For $1+\frac{\alpha}{N}<p,q<1+\frac{\alpha+2}{N}$ and $V_1(x)=V_2(x)=0$, the key step is proved the subadditivity of $e(\xi,\eta)$(see Lemma \ref{lsubadd}), combining with Br\'ezis-Lieb lemma, it can lead to the $L^2$-strong convergence of minimizing sequence of $e(\xi,\eta)$.

For $1+\frac{\alpha+2}{N}<p=q<\frac{N+\alpha}{N-2}$ and $V_1(x)=V_2(x)=0$, applying  the  properties of the functional $E(u,v)$, we construct the mountain pass structure and obtain the bounded Palais-Smale sequence satisfying Pohozev identity in limit case. In addition, the sign of Lagrange multipliers plays an important role in
$L^2$-strong convergence of Palais-Smale sequence. Hence the difficulty can be turned into showing the sign of Lagrange multipliers. By $E'(u,v)\to 0$ and Pohozev identity in limit case, we can verify the sign of one of Lagrange multipliers. But for the other, by the fact the linearly coupled system \eqref{system} has no semi-trivial solutions, we can obtain the sign of the other Lagrange multiplier. Therefore, the $L^2$-strong convergence of Palais-Smale sequence is established.

For $1+\frac{\alpha}{N}<p,q<1+\frac{\alpha+2}{N}$ and $V_1(x)\neq 0, V_2(x)\neq0$, if $V_1(x)$ and $V_2(x)$  are trapping potentials, then
the $L^2$-strong convergence of Palais-Smale sequence can be given by the compact embedding lemma, which can be seen in \cite{R93}. If $V_1(x)$ or  $V_2(x)$ is not trapping potential, then the key step is to establish the split lemma for $e_{V}(\xi,\eta)$, see Lemma \ref{sub-lsplit} and Lemma \ref{lssplit}, combining with Br\'ezis-Lieb lemma, it can lead to the $L^2$-strong convergence of minimizing sequence of $e_{V}(\xi,\eta)$.

In our paper, there is a unsolved problem, which is the existence of normalized solutions for the linearly coupled with potentials for  $L^2$-supercritical case. Indeed, by comparing the energy, the proper linking geometry can be constructed and the bounded Palais-Smale sequence $\{(u_n,v_n)\}$ can be obtained. However, we can not verity the strong convergence of the Palais-Smale sequence $\{(u_n,v_n)\}$.  Similar to Lemma 3.1 in \cite{BC87}, there exists a $(u,v)$ such that $(u_n,v_n)\rightharpoonup(u,v)$ weakly in $H^1(\mathbb{R}^N)\times H^1(\mathbb{R}^N)$. Moreover, there exist an integer $k\ge0$, $k$ sequences of points $y^{j}_{n}(1\le j\le k)$ with $|y^{j}_{n}|\rightarrow\infty$ as $n\rightarrow\infty$ and nontrivial solutions $(w_{1j},w_{2j})(1\le j\le k)$ for the system \eqref{system} with $V_1(x)=V_2(x)=0$ such that
\begin{align*}
\|u_n-u-\sum_{i=1}^{k}w_{1i}(\cdot-y_{n}^{i})\|_{H^1(\mathbb{R}^N)}\rightarrow0,~\|v_n-v-\sum_{i=1}^{k}w_{2i}(\cdot-y_{n}^{i})\|_{H^1(\mathbb{R}^N)}\rightarrow0.
\end{align*}
By the upper bound of the energy, we can exclude the case $k\ge2$. Since the uniqueness of the solutions for the system \eqref{system} with $V_1(x)=V_2(x)=0$ is not clear, we can not exclude the case $k=1$ by comparing the energy.

The paper is organized as follows. In the Section 2, we give some preliminaries.  In the Section 3, we prove Theorem \ref{t1}. In the Section 4, we prove Theorem \ref{t2}. In the Section 5, we prove Theorem \ref{tv1}, Theorem \ref{tv2} and Theorem \ref{tv1v2}.

\section{Preliminaries}
In this section, we give some  preliminaries which will be used in later.

For simplicity, denote that
$$
B(u,p)=\int_{\mathbb{R}^N}(I_{\alpha}\star|u|^p)|u|^{p}dx=\int_{\mathbb{R}^N}\int_{\mathbb{R}^N}\frac{|u(x)|^{p}|u(y)|^{p}}{|x-y|^{N-\alpha}}dydx.
$$

First, we list some inequalities such as Gagliardo-Nirenberg inequality and Hardy-Littewood-Sobolev inequality \cite{LL01}.
\begin{lemma}
\begin{itemize}
  \item [(1)] Let $p\in[2,2^{*})$. If $N\ge3$ and $p\ge2$ if $N=1,2$. Then,
\begin{align}\label{Gagliardo-Nirenberg inequality}
\|f\|_{p}\le C_{N,p}\|\nabla f\|_2^{\gamma_p}\|f\|_2^{1-\gamma_p}
\end{align}
 with $\gamma_p=N(\frac12-\frac1p)$.
  \item [(2)]Let $t,r>1$ and $\alpha\in(0,N)$ with $\frac{1}{t}+\frac{1}{r}=1+\frac{\alpha}{N}$, $f\in L^{t}(\mathbb{R}^N)$ and $h\in L^{r}(\mathbb{R}^N)$. Then there exists a sharp constant $C(N,\alpha, t,r)>0$ such that
      \begin{align}\label{Hardy-Littewood-Sobolev inequality}
      \bigg|\int_{\mathbb{R}^N}\int_{\mathbb{R}^N}\frac{|f(x)||h(y)|}{|x-y|^{N-\alpha}}dxdy\bigg|\le C(N,\alpha, t,r)\|f\|_{t}\|h\|_{r}.
      \end{align}
      Moreover, if $t=r=\frac{2N}{N+\alpha}$, then
      \begin{align*}
      C(N,\alpha):=C(N,\alpha,t,r)=\pi^{\frac{N-\alpha}{2}}\frac{\Gamma(\frac{\alpha}{2})}{\Gamma(\frac{N+\alpha}{2})}\bigg\{\frac{\Gamma(\frac{N}{2})}{\Gamma(N)}\bigg\}^{-\frac{\alpha}{N}}.
      \end{align*}
\end{itemize}
\end{lemma}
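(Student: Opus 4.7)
The plan is to handle the two parts separately, since they rest on rather different techniques, and in both cases I would ultimately defer to \cite{LL01} rather than derive everything from scratch.

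For part (1), the Gagliardo--Nirenberg inequality, the natural approach when $N \ge 3$ is interpolation between $L^2$ and the Sobolev critical exponent. H\"older's inequality gives
\begin{equation*}
\|f\|_p \le \|f\|_2^{1-\theta}\,\|f\|_{2^*}^{\theta},
\end{equation*}
where $\theta \in [0,1]$ is chosen so that $\frac{1}{p} = \frac{1-\theta}{2} + \frac{\theta}{2^*}$. A short calculation using $\frac{1}{2^*} = \frac{1}{2} - \frac{1}{N}$ identifies $\theta = N\bigl(\tfrac{1}{2}-\tfrac{1}{p}\bigr) = \gamma_p$. Applying the Sobolev embedding $\|f\|_{2^*} \le C\|\nabla f\|_2$ to the second factor then produces the stated estimate. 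For $N = 1, 2$, where the critical Sobolev exponent is not available in the same form, I would work directly on Schwartz functions, use the fundamental theorem of calculus (or Fourier transform) to obtain a Nash-type bound, interpolate with $L^2$, and extend by density.

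For part (2), the Hardy--Littlewood--Sobolev inequality, the inequality itself (for \emph{some} constant) can be obtained by the classical layer-cake approach: decompose $|x|^{-(N-\alpha)}$ as a superposition of characteristic functions of balls, apply Young's inequality to each layer to get a weak-type $(t, r')$ bound on the Riesz potential, and interpolate via the Marcinkiewicz theorem. The truly delicate step, and the main obstacle, is identifying the \emph{sharp} constant in the conformal case $t = r = \frac{2N}{N+\alpha}$. The standard route, due to Lieb, is to apply the Riesz rearrangement inequality to reduce to symmetric decreasing competitors, transfer the extremal problem from $\mathbb{R}^N$ to $S^N$ via stereographic projection so as to exploit conformal invariance and compactness, and classify the maximizers as $(1+|x|^2)^{-(N+\alpha)/2}$ up to translation, dilation and scalar multiplication. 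Plugging such a profile back into the functional and evaluating the resulting Beta integrals yields exactly $\pi^{(N-\alpha)/2}\,\Gamma(\alpha/2)\,\Gamma((N+\alpha)/2)^{-1}\,\{\Gamma(N/2)/\Gamma(N)\}^{-\alpha/N}$. Since this argument is carried out in full detail in \cite{LL01}, my proposal is to simply cite it rather than reproduce the classification of extremals, which is the only genuinely technical point.
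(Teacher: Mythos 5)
Your proposal is correct and matches the paper's treatment: the paper offers no proof of this lemma at all, simply citing \cite{LL01}, which is exactly where you ultimately defer. The sketches you add (interpolation between $L^2$ and $L^{2^*}$ plus Sobolev embedding for part (1), and the layer-cake/Marcinkiewicz bound followed by Lieb's rearrangement--stereographic-projection classification of extremals for the sharp constant in part (2)) are the standard arguments and are accurate, including the identification $\theta=\gamma_p$ and the Gamma-function form of $C(N,\alpha)$.
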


\begin{lemma}[Weak Young inequality \cite{LL01}] Let $N\in\mathbb{N},~\alpha\in(0,N),~p,r>1$ and $\frac{1}{p}=\frac{\alpha}{N}+\frac{1}{r}$. If $v\in L^{p}(\mathbb{R}^N)$, then $I_{\alpha}\star v\in L^{r}(\mathbb{R}^N)$
 and
$$
 \bigg(\int_{\mathbb{R}^N}|I_{\alpha}\star v|^{r}\bigg)^{\frac{1}{r}}\le C(N,\alpha,p)\bigg(\int_{\mathbb{R}^N}|v|^{p}\bigg)^{\frac{1}{p}}.
$$
In particular, we can set $p=\frac{N}{\alpha}$ and $r=+\infty$.
\end{lemma}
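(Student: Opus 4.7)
The plan is to realize this as the Hardy--Littlewood--Sobolev estimate in its generalized (weak) Young form for convolution against a weak-$L^{q}$ kernel. The starting point is the observation that the Riesz kernel $I_{\alpha}(x)=|x|^{-(N-\alpha)}$ lies in the Lorentz space $L^{q,\infty}(\mathbb{R}^{N})$ with $q=\tfrac{N}{N-\alpha}$: its distribution function is $|\{x\in\mathbb{R}^{N}:|x|^{-(N-\alpha)}>t\}|=\omega_{N}t^{-q}$ (with $\omega_{N}$ the volume of the unit ball), so $\|I_{\alpha}\|_{L^{q,\infty}}=\omega_{N}^{1/q}<\infty$, even though $I_{\alpha}\notin L^{q}(\mathbb{R}^{N})$. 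The hypothesis $\tfrac{1}{p}=\tfrac{\alpha}{N}+\tfrac{1}{r}$ is exactly the convolution scaling relation $\tfrac{1}{p}+\tfrac{1}{q}=1+\tfrac{1}{r}$ needed to apply weak Young.

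First I would establish a weak-type $(p,r)$ bound for the sublinear operator $Tv:=I_{\alpha}\star v$, namely $\lambda\,|\{x\in\mathbb{R}^{N}:|(Tv)(x)|>\lambda\}|^{1/r}\le C\|v\|_{p}$ for all $\lambda>0$, by the usual truncation device: for $R>0$ decompose $I_{\alpha}=I_{\alpha}\chi_{B_{R}}+I_{\alpha}\chi_{B_{R}^{c}}$, observing that $I_{\alpha}\chi_{B_{R}}\in L^{s}$ for every $s<q$ and $I_{\alpha}\chi_{B_{R}^{c}}\in L^{s}$ for every $s>q$, with norms that are explicit powers of $R$. Applying classical Young's inequality separately on each piece and then optimizing $R=R(\lambda,\|v\|_{p})$ produces the displayed weak-type estimate. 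To obtain the strong-type statement, I would pick two auxiliary pairs $(p_{0},r_{0})$ and $(p_{1},r_{1})$ on the critical line $\tfrac{1}{p_{j}}-\tfrac{1}{r_{j}}=\tfrac{\alpha}{N}$ straddling the target pair $(p,r)$, with $1<p_{j},r_{j}<\infty$, rerun the previous step at both to produce weak-type $(p_{j},r_{j})$ bounds for $T$, and invoke the Marcinkiewicz interpolation theorem to upgrade to the strong-type bound $\|I_{\alpha}\star v\|_{r}\le C(N,\alpha,p)\|v\|_{p}$ at $(p,r)$, which is the main claim.

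The genuinely subtle step, which I expect to be the main obstacle, is the endpoint case $p=N/\alpha$, $r=+\infty$ included in the lemma. It is not a consequence of the Marcinkiewicz argument above, because $T$ does not map $L^{N/\alpha}$ into $L^{\infty}$ on a generic element; instead it lands only in $BMO$. This case should therefore be treated pointwise, by splitting $\mathbb{R}^{N}=B_{1}(x)\cup B_{1}(x)^{c}$ and applying H\"older's inequality on each piece, exploiting the $L^{s}$-integrability of $I_{\alpha}$ near the origin for $s<q$ and near infinity for $s>q$; this yields $|(Tv)(x)|\le C\|v\|_{N/\alpha}$ under a mild additional decay (or compact support) hypothesis on $v$, which is the formulation recorded in Lieb--Loss and is the step that would require the most careful bookkeeping in a full write-up.
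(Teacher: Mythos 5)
The paper does not prove this lemma at all: it is quoted from Lieb--Loss with only a citation, so there is no argument of the paper's to compare yours against. For the interior range $1<p,r<\infty$ your proposal is the standard proof and it is correct: the computation of the distribution function of $I_\alpha$, the identification of the hypothesis $\tfrac1p=\tfrac{\alpha}{N}+\tfrac1r$ with the convolution scaling $\tfrac1p+\tfrac1q=1+\tfrac1r$ for $q=\tfrac{N}{N-\alpha}$, the truncation $I_\alpha=I_\alpha\chi_{B_R}+I_\alpha\chi_{B_R^c}$ with optimization in $R$ to get the weak-type $(p,r)$ bound, and the upgrade to strong type via Marcinkiewicz interpolation between two straddling pairs on the critical line all go through.

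Your suspicion about the endpoint $p=N/\alpha$, $r=+\infty$ is correct, and in fact the situation is worse than you describe: the sentence ``in particular, we can set $p=\tfrac{N}{\alpha}$ and $r=+\infty$'' is false as stated, and a compact-support hypothesis does not repair it. Take $v(y)=|y|^{-\alpha}\bigl(\log(1/|y|)\bigr)^{-\beta}\chi_{B_{1/2}}(y)$ with $\alpha/N<\beta\le 1$; then $v\in L^{N/\alpha}(\mathbb{R}^N)$ and $v$ is compactly supported, yet $(I_\alpha\star v)(0)=c\int_0^{1/2}t^{-1}\bigl(\log(1/t)\bigr)^{-\beta}\,dt=+\infty$, and $I_\alpha\star v$ is unbounded near the origin. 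The local H\"older step in your last paragraph requires $I_\alpha\chi_{B_1}\in L^{s'}$ with $s'<\tfrac{N}{N-\alpha}$, hence $v\in L^{s}_{\mathrm{loc}}$ for some $s>N/\alpha$; that is a genuine strengthening of the hypothesis, not a mild decay condition. Correct sufficient conditions are $v\in L^{s_1}\cap L^{s_2}$ with $s_1<N/\alpha<s_2$, or $v$ in the Lorentz space $L^{N/\alpha,1}$; Lieb--Loss's weak Young inequality itself requires all three exponents strictly between $1$ and $\infty$ and does not contain this endpoint. In short, your argument completely proves the part of the lemma that is true, and the defect you flagged at the endpoint is a defect of the lemma as the paper states it, not of your proof.
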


In \eqref{Hardy-Littewood-Sobolev inequality}, take $t=r=\frac{2N}{N+\alpha}$ and $f=h=|u|^{p}$, together with \eqref{Gagliardo-Nirenberg inequality}, we have
\begin{align}\label{bu}
B(u,p)\le C(N,\alpha)\|u\|_{\frac{2Np}{N+\alpha}}^{2p}\le C(N,s,\alpha,p)\|\nabla u\|_2^{2p\delta_{p}}\|u\|_2^{2p(1-\delta_{p})},
\end{align}
where $\delta_{p}=\frac{N(p-1)-\alpha}{2p}$.  Furthermore, it is easy to observe that $B(u,p)$ is well defined for $p\in(1+\frac{\alpha}{N},\frac{N+\alpha}{N-2})$.

\begin{lemma}\label{lbw}\cite{Lx20}
Assume that $N>2s,~\alpha\in(0,N)$ and $r\in[1+\frac{\alpha}{N},\frac{N+\alpha}{N-2s}]$. Suppose that $\{u_n\}_{n=1}^{\infty}\subset H^{s}(\mathbb{R}^N)$ satisfy $u_n\rightharpoonup u$  weakly in $H^{s}(\mathbb{R}^N)$ as $n\rightarrow\infty$. Then
\begin{align*}
\int_{\mathbb{R}^N}(I_{\alpha}\star|u_n|^p)|u_n|^{p-2}u_n\varphi\rightarrow\int_{\mathbb{R}^N}(I_{\alpha}\star|u|^p)|u|^{p-2}u\varphi\text{~weakly in }H^{-s}(\mathbb{R}^N)\text{~as~}n\rightarrow\infty,
\end{align*}
for any $\varphi\in H^{s}(\mathbb{R}^N)$.
\end{lemma}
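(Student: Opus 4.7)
The plan is to establish weak convergence in $H^{-s}(\mathbb{R}^N)$ via the classical two-step recipe: (i) prove uniform boundedness of $T_n := (I_\alpha\star|u_n|^p)|u_n|^{p-2}u_n$ in $H^{-s}(\mathbb{R}^N)$, and (ii) verify the convergence $\langle T_n,\varphi\rangle\to \langle T,\varphi\rangle$ on the dense subspace $C_c^\infty(\mathbb{R}^N)\subset H^s(\mathbb{R}^N)$, where $T := (I_\alpha\star|u|^p)|u|^{p-2}u$. A standard $3\varepsilon$-argument then upgrades (ii) to weak convergence in $H^{-s}(\mathbb{R}^N)$.

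For (i), applying the Hardy--Littlewood--Sobolev inequality (Lemma 2.1(2)) with $t=r=\tfrac{2N}{N+\alpha}$ and then H\"older gives, for any $\varphi\in H^s(\mathbb{R}^N)$,
\begin{equation*}
|\langle T_n,\varphi\rangle|\le C\,\||u_n|^p\|_{\frac{2N}{N+\alpha}}\,\|u_n\|_{\frac{2Np}{N+\alpha}}^{p-1}\,\|\varphi\|_{\frac{2Np}{N+\alpha}}.
\end{equation*}
The admissible range $p\in[1+\tfrac{\alpha}{N},\tfrac{N+\alpha}{N-2s}]$ is exactly what makes $\tfrac{2Np}{N+\alpha}\in[2,2_s^{*}]$, so the Sobolev embedding $H^s(\mathbb{R}^N)\hookrightarrow L^{2Np/(N+\alpha)}(\mathbb{R}^N)$ yields $|\langle T_n,\varphi\rangle|\le C\|u_n\|_{H^s}^{2p-1}\|\varphi\|_{H^s}$, which is uniform in $n$ since weakly convergent sequences are bounded.

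For (ii), fix $\varphi\in C_c^\infty(\mathbb{R}^N)$ with $\operatorname{supp}\varphi=K$ and split
\begin{equation*}
\langle T_n - T,\varphi\rangle = I_n^{(1)} + I_n^{(2)},
\end{equation*}
where $I_n^{(1)}:=\int(I_\alpha\star|u_n|^p)(|u_n|^{p-2}u_n-|u|^{p-2}u)\varphi\,dx$ and $I_n^{(2)}:=\int(I_\alpha\star(|u_n|^p-|u|^p))|u|^{p-2}u\,\varphi\,dx$. The integrand of $I_n^{(1)}$ is supported in $K$, and Rellich--Kondrachov gives $u_n\to u$ strongly in $L^{2p}(K)$ (along a subsequence); continuity of the Nemytskii operator $v\mapsto|v|^{p-2}v$ converts this into strong convergence of the difference in $L^{2p/(p-1)}(K)$, and H\"older against the uniform $L^{2N/(N-\alpha)}$-bound on $I_\alpha\star|u_n|^p$ (by Lemma 2.2) closes this term. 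For $I_n^{(2)}$, use the symmetry $\int(I_\alpha\star f)g=\int f(I_\alpha\star g)$ to rewrite it as $\int(|u_n|^p-|u|^p)\,g_0\,dx$ with $g_0:=I_\alpha\star(|u|^{p-2}u\varphi)$; since $|u|^{p-2}u\varphi$ is compactly supported, $g_0\in L^{2N/(N-\alpha)}(\mathbb{R}^N)$ and moreover decays like $|x|^{-(N-\alpha)}$ at infinity. Localizing on a large ball $B_R\supset K$, the inner contribution vanishes because $|u_n|^p\to|u|^p$ strongly in $L^{2N/(N+\alpha)}(B_R)$ by Rellich, while on $B_R^c$ the pointwise decay of $g_0$ combined with the uniform $L^{2N/(N+\alpha)}$-bound on $|u_n|^p$ makes the tail arbitrarily small.

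\textbf{Main obstacle.} The delicate point is precisely this tail control in $I_n^{(2)}$: weak convergence in $H^s$ only yields strong \emph{local} convergence of $u_n$, while the Riesz potential couples distant regions. The resolution is to test against $C_c^\infty$ rather than arbitrary $H^s$ elements, because then the auxiliary convolution $g_0=I_\alpha\star(|u|^{p-2}u\varphi)$ inherits polynomial decay from the compact support of $|u|^{p-2}u\varphi$; this decay, paired with the uniform $L^{2N/(N+\alpha)}$-bound on $|u_n|^p$, is what tames the nonlocal interaction and allows the limit to be taken. Once (i) and (ii) are in hand, density of $C_c^\infty(\mathbb{R}^N)$ in $H^s(\mathbb{R}^N)$ finishes the proof.
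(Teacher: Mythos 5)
The paper itself does not prove this lemma: it is imported verbatim from \cite{Lx20} (X.~Li, J.~Differential Equations 268 (2020)), so there is no in-paper argument to compare yours against. Your two-step architecture --- uniform $H^{-s}$-bounds via Hardy--Littlewood--Sobolev, H\"older and Sobolev embedding, then convergence against $C_c^\infty$ test functions via the splitting into $I_n^{(1)}+I_n^{(2)}$, with the self-adjointness of $I_\alpha\star$ and the decay of $g_0=I_\alpha\star(|u|^{p-2}u\varphi)$ taming the tail --- is exactly the standard proof of this statement (it is essentially the argument in Moroz--Van Schaftingen and in \cite{Lx20}), and step (i) together with the treatment of $I_n^{(2)}$ on $B_R^c$ is correct as written.

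There is, however, a concrete exponent gap in the local compactness steps over the full stated range $p\in[1+\tfrac{\alpha}{N},\tfrac{N+\alpha}{N-2s}]$. In $I_n^{(1)}$ you invoke Rellich--Kondrachov to get $u_n\to u$ in $L^{2p}(K)$, but $2p<2_s^{*}=\tfrac{2N}{N-2s}$ only when $p<\tfrac{N}{N-2s}$, whereas the lemma allows $p$ up to $\tfrac{N+\alpha}{N-2s}>\tfrac{N}{N-2s}$; the fix is to run the Nemytskii/H\"older step at the exponent actually needed, namely $u_n\to u$ in $L^{2N(p-1)/(N+\alpha)}(K)$, which is strictly subcritical for every admissible $p$ since $\tfrac{2N(p-1)}{N+\alpha}\le\tfrac{2N}{N-2s}\cdot\tfrac{\alpha+2s}{N+\alpha}<2_s^{*}$. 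A second, less cosmetic issue sits at the closed right endpoint $p=\tfrac{N+\alpha}{N-2s}$: there your inner estimate for $I_n^{(2)}$ requires $|u_n|^p\to|u|^p$ strongly in $L^{2N/(N+\alpha)}(B_R)$, i.e.\ $u_n\to u$ in $L^{2_s^{*}}(B_R)$, which local compactness does not give. At that endpoint one must instead use that $|u_n|^p$ is bounded in $L^{2N/(N+\alpha)}$ and converges a.e., hence converges weakly in $L^{2N/(N+\alpha)}$, and pair this weak convergence against the fixed function $g_0\in L^{2N/(N-\alpha)}$. For the strictly subcritical exponents actually used everywhere in this paper ($1+\tfrac{\alpha}{N}<p<\tfrac{N+\alpha}{N-2}$), your argument, with the corrected exponent in $I_n^{(1)}$, goes through.
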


Next we give Br\'ezis-Lieb lemma for the nonlocal term of the functional, which can be seen in \cite{MV13}.
\begin{lemma}\label{lbl}
Let $N\in\mathbb{N},~\alpha\in(0,N),~p\in[1,\frac{2N}{N+\alpha})$ and $\{u_n\}$ be a bounded sequence in $L^{\frac{2Np}{N+\alpha}}$. If $u_n\to u$ almost everywhere on $\mathbb{R}^N$ as $n\to\infty$, then
$$
\lim_{n\to\infty}\int_{\mathbb{R}^N}(I_{\alpha}\star|u_n|^{p})|u_n|^{p}-\int_{\mathbb{R}^N}(I_{\alpha}\star|u_n-u|^{p})|u_n-u|^{p}
=\int_{\mathbb{R}^N}(I_{\alpha}\star|u|^{p})|u|^{p}.$$
\end{lemma}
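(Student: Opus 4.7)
The plan is to expand the nonlocal quadratic form bilinearly around $u_n = u + w_n$ and reduce each resulting piece via the classical Br\'ezis--Lieb lemma and the Hardy--Littlewood--Sobolev (HLS) inequality. Set $w_n := u_n - u$, so that $w_n \to 0$ a.e.\ and $\{w_n\}$ is bounded in $L^{\frac{2Np}{N+\alpha}}$; note also $u\in L^{\frac{2Np}{N+\alpha}}$ by Fatou. Introduce the symmetric bilinear form
\begin{align*}
T(f,g):=\int_{\mathbb{R}^N}(I_\alpha\star f)\, g\, dx,
\end{align*}
which is continuous on $L^{\frac{2N}{N+\alpha}}$ by HLS. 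Writing $|u_n|^p=|w_n|^p+a_n$ with $a_n:=|u_n|^p-|w_n|^p$, bilinearity and symmetry yield
\begin{align*}
T(|u_n|^p,|u_n|^p)-T(|w_n|^p,|w_n|^p)=2\,T(|w_n|^p,a_n)+T(a_n,a_n),
\end{align*}
so the lemma reduces to showing this right-hand side tends to $T(|u|^p,|u|^p)$.

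The main step, which I expect to be the principal obstacle, is the strong convergence $a_n \longrightarrow |u|^p$ in $L^{\frac{2N}{N+\alpha}}(\mathbb{R}^N)$. Pointwise convergence is immediate from $u_n\to u$ and $w_n\to 0$ a.e. For uniform $L^{\frac{2N}{N+\alpha}}$ control, I would invoke the elementary inequality
\begin{align*}
\bigl||s+t|^p-|s|^p-|t|^p\bigr|\le \varepsilon|s|^p+C_\varepsilon|t|^p \qquad (p\ge 1,\ \varepsilon>0),
\end{align*}
applied with $s=w_n$, $t=u$, which gives $|a_n-|u|^p|\le \varepsilon|w_n|^p+C_\varepsilon|u|^p$. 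Raising to the $\frac{2N}{N+\alpha}$-power and combining the uniform $L^{\frac{2Np}{N+\alpha}}$ bound on $\{w_n\}$ with the absolute continuity of $\int|u|^{\frac{2Np}{N+\alpha}}$, a Vitali-type argument upgrades a.e.\ convergence of $a_n$ to strong convergence in $L^{\frac{2N}{N+\alpha}}$.

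Granting this, the rest is routine. By HLS-continuity of $T$ on $L^{\frac{2N}{N+\alpha}}$ one obtains $T(a_n,a_n)\to T(|u|^p,|u|^p)$. For the cross term, $|w_n|^p$ is bounded in $L^{\frac{2N}{N+\alpha}}$ and converges to $0$ a.e., so $|w_n|^p\rightharpoonup 0$ weakly in this reflexive space. On the other hand, the weak Young inequality (Lemma 2.2) turns the strong convergence of $a_n$ in $L^{\frac{2N}{N+\alpha}}$ into strong convergence $I_\alpha\star a_n\to I_\alpha\star|u|^p$ in $L^{\frac{2N}{N-\alpha}}$, the dual of $L^{\frac{2N}{N+\alpha}}$. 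Pairing strong against weak,
\begin{align*}
T(|w_n|^p,a_n)=\int_{\mathbb{R}^N}(I_\alpha\star a_n)\,|w_n|^p\, dx\longrightarrow 0,
\end{align*}
and combining with the preceding limit in the bilinear identity completes the proof.
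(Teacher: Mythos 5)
Your argument is correct and is essentially the standard proof of this nonlocal Br\'ezis--Lieb lemma from Moroz--Van Schaftingen \cite{MV13}, which the paper cites without reproducing: bilinear expansion of the quadratic form, strong $L^{\frac{2N}{N+\alpha}}$ convergence of $|u_n|^p-|u_n-u|^p$ to $|u|^p$ via the refined Br\'ezis--Lieb inequality, and a strong--weak pairing (through HLS/weak Young) to kill the cross term. No gaps; this matches the intended proof.
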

\begin{lemma}\cite{LL01}\label{lra}
Let $f,g$ and $h$ be three Lebesgue measurable non-negative functions on $\mathbb{R}^N$. Then, with
$$
\Psi(f,g,h)=\int_{\mathbb{R}^N}\int_{\mathbb{R}^N}f(x)g(x-y)h(y)dxdy,
$$
we have
$$
\Psi(f,g,h)\le\Psi(f^{*},g^{*},h^{*}),
$$
where $f^{*}$ is the Schwartz rearrangement of $f$.
\end{lemma}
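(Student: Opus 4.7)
The plan is to prove the Riesz rearrangement inequality by the classical strategy: reduce to characteristic functions via the layer-cake formula, then obtain the inequality by successive Steiner symmetrizations whose limit is the Schwarz rearrangement. Throughout I assume $f,g,h$ have finite measure support (or are in suitable $L^p$ spaces); the general case follows by monotone approximation.

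First I would apply the layer-cake decomposition: write $f(x)=\int_0^\infty \chi_{\{f>s\}}(x)\,ds$ and similarly for $g,h$. Inserting these into $\Psi$ and using Fubini--Tonelli yields
\begin{equation*}
\Psi(f,g,h)=\int_0^\infty\!\!\int_0^\infty\!\!\int_0^\infty \Psi(\chi_{A_s},\chi_{B_t},\chi_{C_u})\,ds\,dt\,du,
\end{equation*}
where $A_s=\{f>s\}$ etc. Since $(\chi_{A_s})^{*}=\chi_{A_s^{*}}$ and $(f^{*})_s=(A_s)^{*}$ by the definition of symmetric decreasing rearrangement, it suffices to prove the inequality for characteristic functions of measurable sets of finite measure. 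In that reduction I would also approximate such sets by bounded sets, so I may assume $A,B,C$ are bounded.

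Next I would carry out the Steiner symmetrization argument. Given a unit direction $e\in\mathbb{R}^N$, the Steiner symmetrization $A^e$ of $A$ about the hyperplane $e^{\perp}$ is obtained by replacing each one-dimensional fiber $A\cap(x'+\mathbb{R}e)$ by the centered symmetric interval of the same length. The key one-dimensional lemma is: for measurable $A,B,C\subset\mathbb{R}$ of finite measure,
\begin{equation*}
\int_\mathbb{R}\!\int_\mathbb{R}\chi_A(x)\chi_B(x-y)\chi_C(y)\,dx\,dy\le\int_\mathbb{R}\!\int_\mathbb{R}\chi_{A^{*}}(x)\chi_{B^{*}}(x-y)\chi_{C^{*}}(y)\,dx\,dy,
\end{equation*}
where $A^{*}$ denotes the centered interval of the same length. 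This can be proved either by a direct ``sliding'' argument (translate $A$ and $C$ toward the origin and show the triple overlap is non-decreasing), or first for finite unions of intervals by combinatorial rearrangement and then extending by approximation. Applying this inequality to each fiber along $e$ and integrating over $e^{\perp}$ (using Fubini) gives $\Psi(\chi_A,\chi_B,\chi_C)\le \Psi(\chi_{A^e},\chi_{B^e},\chi_{C^e})$.

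Finally I would iterate: by a classical theorem there exists a sequence of directions $e_k$ such that iterated Steiner symmetrizations $A^{e_1 e_2\cdots e_k}$ converge in measure (and in Hausdorff distance, on bounded sets) to the Schwarz rearrangement $A^{*}$. Since each Steiner symmetrization does not decrease $\Psi$ and the functional $(A,B,C)\mapsto \Psi(\chi_A,\chi_B,\chi_C)$ is continuous under convergence in measure (dominated convergence, using the boundedness assumption and that $\chi_B(x-y)$ is a bounded kernel), passing to the limit yields $\Psi(\chi_A,\chi_B,\chi_C)\le \Psi(\chi_{A^{*}},\chi_{B^{*}},\chi_{C^{*}})$. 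Combined with the layer-cake reduction, this completes the proof. The main obstacle I expect is the one-dimensional rearrangement inequality together with the convergence of iterated Steiner symmetrizations to the Schwarz rearrangement; both are technical but classical, and since the statement is quoted from Lieb--Loss \cite{LL01} I would simply cite the proof there rather than reprove everything.
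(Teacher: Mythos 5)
The paper offers no proof of this lemma at all: it is quoted verbatim from Lieb--Loss \cite{LL01} (their Riesz rearrangement inequality), and your sketch is precisely the standard argument given there — layer-cake reduction to characteristic functions, the one-dimensional sliding lemma, Steiner symmetrization fiberwise, and convergence of iterated symmetrizations to the Schwarz rearrangement. Your outline is correct and, like the paper, ultimately defers to the cited source for the technical steps, so there is nothing to compare beyond noting the agreement.
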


Next we consider the single equation
\begin{align}\label{scalar problem}
\left\{\begin{aligned}
&-\Delta u+\lambda u=\mu(I_{\alpha}\star|u|^p)|u|^{p-2}u~\quad x\in \mathbb{R}^N,\\
&\int_{\mathbb{R}^N}u^2dx=c^2.
\end{aligned}
\right.
\end{align}
The corresponding functional with \eqref{scalar problem} is
\begin{align*}
F_{\mu}(u)=\frac{1}{2}\|\nabla u\|_2^2-\frac{\mu}{2p}B(u,p).
\end{align*}
Denote that
\begin{align}\label{mcnc}
\begin{split}
&m(c,\mu):=\inf_{u\in S_{c}}F_{\mu}(u)\text{ for }1+\frac{\alpha}{N}<p<1+\frac{\alpha+2}{N},\\
&n(c,\mu):=\inf_{\mathcal{N}_{c}}F_{\mu}(u) \text{ for }1+\frac{\alpha+2}{N}<p<\frac{N+\alpha}{N-2},
\end{split}
\end{align}
where
\begin{align*}
G(u)=\|\nabla u\|_2^2-\mu\delta_{p}B(u,p)\text{~and~}\mathcal{N}_{c}=\{u\in S_{c}:G(u)=0\}.
\end{align*}
We collect some properties about the scalar equation which can be seen in \cite{CL82,LL20,LY14}.
\begin{lemma}\label{yt1}
\begin{itemize}
\item[(i)] Assume that $1+\frac{\alpha}{N}<p<1+\frac{\alpha+2}{N}$. Then
\begin{enumerate}
  \item $m(c,\mu)<0$ for all $c>0$.
  \item $m(c,\mu)<m(c-\alpha,\mu)+m(\alpha,\mu)$ for any $\alpha\in(0,c)$.
\end{enumerate}
\item[(ii)]Assume that $1+\frac{\alpha+2}{N}<p<\frac{N+\alpha}{N-2}$. Then
  $n(c,\mu)$ is strictly decreasing with respect to $c$.
\end{itemize}
\end{lemma}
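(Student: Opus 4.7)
The plan is to exploit the different scaling behavior of the kinetic and Choquard terms through two kinds of test-function modifications: the $L^2$-preserving dilation $u_t(x):=t^{N/2}u(tx)$, and the pure multiplicative rescaling $v:=\theta u$ which changes the mass. A direct change of variables gives $\|u_t\|_2=\|u\|_2$, $\|\nabla u_t\|_2^2=t^2\|\nabla u\|_2^2$ and $B(u_t,p)=t^{2p\delta_p}B(u,p)$, so that
\begin{equation*}
F_\mu(u_t)=\frac{t^2}{2}\|\nabla u\|_2^2-\frac{\mu t^{2p\delta_p}}{2p}B(u,p).
\end{equation*}
For part (i)(1), fix any $u\in S_c$ (so $B(u,p)>0$). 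Since the $L^2$-subcritical hypothesis $p<1+\frac{\alpha+2}{N}$ is exactly $2p\delta_p<2$, the Choquard term dominates as $t\to 0^+$, giving $F_\mu(u_t)<0$ for $t$ sufficiently small; hence $m(c,\mu)\le F_\mu(u_t)<0$.

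For part (i)(2), I first establish the sharper comparison $m(\theta c,\mu)<\theta\,m(c,\mu)$ for every $\theta>1$. Testing with $v:=\theta u\in S_{\theta c}$ one computes
\begin{equation*}
F_\mu(v)-\theta^2F_\mu(u)=\frac{\mu(\theta^2-\theta^{2p})}{2p}B(u,p)<0,
\end{equation*}
since $p>1$ and $B(u,p)>0$, so passing to the infimum yields $m(\theta c,\mu)\le\theta^2m(c,\mu)$; combining this with $\theta^2m(c,\mu)<\theta m(c,\mu)$, which uses $m(c,\mu)<0$ from (i)(1), produces the claim. Now, for the partition parameter $\alpha\in(0,c)$ appearing in the statement, apply this comparison with $\theta=c/\alpha$ and $\theta=c/(c-\alpha)$ respectively to obtain
\begin{equation*}
\tfrac{\alpha}{c}\,m(c,\mu)<m(\alpha,\mu),\qquad \tfrac{c-\alpha}{c}\,m(c,\mu)<m(c-\alpha,\mu),
\end{equation*}
and simply add the two inequalities to reach the strict subadditivity $m(c,\mu)<m(\alpha,\mu)+m(c-\alpha,\mu)$.

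For part (ii), observe that $L^2$-supercriticality $p>1+\frac{\alpha+2}{N}$ translates into $2p\delta_p>2$, so for any $u\in S_c$ with $B(u,p)>0$ the map $t\mapsto F_\mu(u_t)$ is positive near the origin, tends to $-\infty$ as $t\to\infty$, and attains a unique maximum at some $t_u>0$ with $u_{t_u}\in\mathcal{N}_c$; in particular $n(c,\mu)>0$, because on $\mathcal{N}_c$ one has $F_\mu(u)=\frac{p\delta_p-1}{2p\delta_p}\|\nabla u\|_2^2>0$. Given $c'>c$, pick any $u\in\mathcal{N}_c$, set $\lambda:=c'/c>1$ and $v:=\lambda u\in S_{c'}$, and let $t_v>0$ be the unique scale with $v_{t_v}\in\mathcal{N}_{c'}$. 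Using $G(u)=0$ one finds $t_v=\lambda^{-(p-1)/(p\delta_p-1)}$, so
\begin{equation*}
F_\mu(v_{t_v})=\lambda^2\,t_v^2\,F_\mu(u)=\lambda^{\gamma}F_\mu(u),\qquad \gamma:=\frac{2p(\delta_p-1)}{p\delta_p-1}.
\end{equation*}
The range $1+\frac{\alpha+2}{N}<p<\frac{N+\alpha}{N-2}$ forces $\delta_p<1<p\delta_p$, so $\gamma<0$ and $\lambda^\gamma<1$; since $v_{t_v}\in\mathcal{N}_{c'}$, taking the infimum over $u\in\mathcal{N}_c$ yields $n(c',\mu)\le\lambda^{\gamma}n(c,\mu)<n(c,\mu)$, where the last inequality uses $n(c,\mu)>0$.

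The main technical obstacle is the careful bookkeeping of the competing scaling exponents and verifying the sign conditions (namely $2p\delta_p<2$ in the subcritical regime and $\delta_p<1<p\delta_p$ in the supercritical regime) that make each comparison go in the required direction; once the right test functions are fixed, everything reduces to elementary manipulation of exponents together with the sign information $m(c,\mu)<0$ from (i)(1) and $n(c,\mu)>0$ coming from the geometry of $\mathcal{N}_c$.
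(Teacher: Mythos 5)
Your argument is correct in substance, and it is worth noting that the paper itself offers no proof of this lemma: it simply cites \cite{CL82,LL20,LY14}, where exactly the two devices you use appear — the Cazenave--Lions multiplicative rescaling $u\mapsto\theta u$ combined with $m(c,\mu)<0$ to get strict subadditivity, and the fiber map $t\mapsto F_\mu(u_t)$ on the Pohozaev manifold $\mathcal{N}_c$ to get monotonicity of $n(c,\mu)$. So you have supplied the standard proof that the paper outsources. All the exponent bookkeeping checks out: $2p\delta_p<2$ is equivalent to $p<1+\frac{\alpha+2}{N}$, the projection scale $t_v=\lambda^{-(p-1)/(p\delta_p-1)}$ is what $G(u)=0$ forces, and $\delta_p<1<p\delta_p$ in the supercritical range gives $\gamma<0$. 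One step needs a line more than you give it: you justify $n(c,\mu)>0$ by saying $F_\mu(u)=\frac{p\delta_p-1}{2p\delta_p}\|\nabla u\|_2^2>0$ on $\mathcal{N}_c$, but pointwise positivity only yields $n(c,\mu)\ge 0$, and the final strict inequality $\lambda^\gamma n(c,\mu)<n(c,\mu)$ collapses if $n(c,\mu)=0$. The fix is the standard one: on $\mathcal{N}_c$ the constraint $\|\nabla u\|_2^2=\mu\delta_p B(u,p)$ combined with \eqref{bu} gives $\|\nabla u\|_2^2\le \mu\delta_p C\,\|\nabla u\|_2^{2p\delta_p}c^{2p(1-\delta_p)}$, and since $2p\delta_p>2$ this forces a uniform positive lower bound on $\|\nabla u\|_2$ over $\mathcal{N}_c$, hence $n(c,\mu)>0$. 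With that line inserted the proof is complete.
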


\begin{lemma}\cite{WY17}
Suppose that $N=3,4,5$, if $u$ is a positive solution of
\begin{align*}
-\Delta u+u=\int_{\mathbb{R}^N}\frac{|u(y)|^2}{|x-y|^{N-2}}dyu,\quad u\in\mathbb{R}^N,
\end{align*}
then $u$ is unique up to translations.
\end{lemma}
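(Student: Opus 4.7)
\medskip

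\noindent\textbf{Proof proposal (plan).} The plan is to combine three ingredients: symmetrization/regularity, a careful study of the linearization at a positive solution, and a continuation argument. Throughout, write the equation as the Schr\"odinger--Newton system
\begin{align*}
-\Delta u+u=\phi_{u}u,\qquad -\Delta \phi_{u}=c_{N}u^{2},\qquad \phi_{u}=I_{2}\star u^{2}.
\end{align*}
First I would recall that any positive $H^{1}$ solution is classical and decays exponentially: by elliptic regularity and the Weak Young inequality ($p=N/2$, $r=\infty$), $\phi_{u}\in L^{\infty}$, so bootstrapping gives $u\in C^{2}\cap W^{2,r}$ and $u(x)\to 0$ as $|x|\to\infty$. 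Then I would apply the moving plane method in its nonlocal form (as developed by Ma--Zhao and refined by Moroz--Van Schaftingen) to the coupled system: the key observation is that $\phi_{u}(x)=\phi_{u}(|x-x_{0}|)$ whenever $u$ is radial about $x_{0}$, and the interaction comparison for $|x-y|^{2-N}$ under reflections is sign-definite. This reduces the problem to positive radial solutions centered at the origin.

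Next I would linearize at such a radial positive solution $U$. Set
\begin{align*}
L_{+}\eta=-\Delta\eta+\eta-2(I_{2}\star(U\eta))U-(I_{2}\star U^{2})\eta.
\end{align*}
The three translations $\partial_{x_{i}}U$ always lie in $\ker L_{+}$, and the content of the non-degeneracy statement is that they span it. I would decompose $L_{+}$ using spherical harmonics. On the $k$-th angular sector the operator becomes a one-dimensional Schr\"odinger operator with an added nonlocal rank-one perturbation coming from the $U\eta$ term (this perturbation is only present in the $k=0$ sector, where the convolution sees the radial average). For $k\ge 2$ one shows positivity by comparison with the radial sector and an oscillation/Sturm argument; for $k=1$, $\partial_{x_{1}}U$ is a sign-changing zero-energy ground state, so by the standard Perron--Frobenius argument in the $k=1$ sector the kernel is one-dimensional; for $k=0$, I would use that $U$ itself is associated with a variational minimization and invoke a Morse-index bound, combined with the Pohozaev identity, to rule out extra zero modes. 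The restriction $N=3,4,5$ enters here because the subcritical scaling of the Hartree nonlinearity $B(u,2)$ (i.e.\ $2\delta_{2}=(N-2)<2$ in the Gagliardo--Nirenberg bound \eqref{bu}) guarantees that the radial mountain-pass solution has Morse index exactly one, which is what forces $\ker L_{+}|_{k=0}=\{0\}$.

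The main obstacle will be this non-degeneracy in the radial sector: the nonlocal Hartree term destroys the pointwise maximum-principle arguments that work for local nonlinearities, and one must instead exploit variational characterizations (uniqueness of the positive ground state follows from Schwarz symmetrization a la Lieb), together with the dimensional restriction to control the spectrum. I would follow the strategy in \cite{WY17} here, since repeating the detailed spectral computation would be lengthy.

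Finally, I would conclude uniqueness by a continuation argument. Scale out $\lambda$: any positive radial solution $U_{\lambda}$ of $-\Delta u+\lambda u=(I_{2}\star u^{2})u$ is obtained from a solution at $\lambda=1$ by the rescaling $U_{\lambda}(x)=\lambda^{(4-N)/2}\,U_{1}(\sqrt{\lambda}\,x)$ when $N\le 5$. Combined with non-degeneracy, the implicit function theorem applied along the branch of radial ground states shows the set of $\lambda>0$ admitting a unique positive solution is open and closed (compactness coming from the decay and uniform $H^{1}$ bounds of ground states), hence equals $(0,\infty)$. This yields uniqueness up to translation, as claimed.
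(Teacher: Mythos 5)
The paper does not prove this lemma at all: it is quoted from \cite{WY17} as a known result, so there is no in-paper argument to compare yours against. Judged on its own terms, your proposal is a plan rather than a proof, and it explicitly defers the central difficulty --- non-degeneracy of the linearization in the radial sector --- back to \cite{WY17}, the very reference whose content is at issue. That is where essentially all of the work lies, so as written the argument is not self-contained.

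Beyond that, three concrete points would need repair. First, non-degeneracy of a given positive solution only shows that it is isolated modulo translations; it does not by itself exclude a second, non-minimizing positive solution, so some global input (for instance that every positive solution is a ground state, or an ODE/shooting or Wronskian-type comparison of two putative radial solutions, which is closer to what \cite{WY17} actually does) is still required. Second, the continuation-in-$\lambda$ step is vacuous: for every $\lambda>0$ the rescaling $u\mapsto \lambda\,u(\sqrt{\lambda}\,\cdot)$ is a bijection between the positive solution sets at frequency $\lambda$ and at frequency $1$ (note the exponent is $1$, not $(4-N)/2$: balancing $\lambda^{a+1}=\lambda^{3a-1}$ forces $a=1$), so ``the set of $\lambda$ with uniqueness is open and closed'' carries no information; a genuine continuation argument would have to deform the nonlinearity, not the frequency. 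Third, your explanation of the restriction $N=3,4,5$ is off: with $p=\alpha=2$ one has $2p\delta_{p}=N-2$ in \eqref{bu}, so the problem is $L^{2}$-subcritical only for $N=3$ and $L^{2}$-supercritical for $N=5$; the actual role of $N\le 5$ is the Hardy--Littlewood--Sobolev/Sobolev subcriticality $2<\frac{N+2}{N-2}$, i.e.\ $N<6$. Consequently the Morse-index-one claim in the $k=0$ sector cannot be justified by $L^{2}$-subcriticality and needs a different argument for $N=4,5$.
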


\section{$L^2$-subcritical case }
In this section, we prove the existence of solutions for $L^2$-subcritical case without potential. In order to show Theorem \ref{t1}, we need some lemmata.

\begin{lemma}\label{lbfb}
$E(u,v)$ is coercive and bounded from below on $S_{\xi}\times S_{\eta}$.
\end{lemma}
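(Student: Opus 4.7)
The plan is to use the Hardy–Littlewood–Sobolev/Gagliardo–Nirenberg chain \eqref{bu} to control the two Choquard nonlocal terms by a sub-quadratic power of the gradient norms on the mass constraint, and then dominate the cross term $\int \beta uv$ by a pure constant via Cauchy–Schwarz, since on $S_\xi\times S_\eta$ the $L^2$-norms are fixed.

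First, I would record that the $L^2$-subcritical assumption $1+\frac{\alpha}{N}<p,q<1+\frac{\alpha+2}{N}$ is equivalent, via $\delta_p=\frac{N(p-1)-\alpha}{2p}$, to the strict inequalities $2p\delta_p<2$ and $2q\delta_q<2$. Applying \eqref{bu} on $S_\xi$ and $S_\eta$ gives
\begin{align*}
B(u,p)\le C_{N,\alpha,p}\,\xi^{2p(1-\delta_p)}\|\nabla u\|_2^{2p\delta_p},\qquad B(v,q)\le C_{N,\alpha,q}\,\eta^{2q(1-\delta_q)}\|\nabla v\|_2^{2q\delta_q}.
\end{align*}
Since $\beta$ is here taken as a positive constant (the setting of Theorem \ref{t1}), Cauchy–Schwarz yields $\bigl|\int_{\mathbb{R}^N}\beta uv\,dx\bigr|\le \beta\xi\eta$, so this term is uniformly bounded on $S_\xi\times S_\eta$.

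Combining the three estimates, for every $(u,v)\in S_\xi\times S_\eta$,
\begin{align*}
E(u,v)\ge \tfrac12\|\nabla u\|_2^2+\tfrac12\|\nabla v\|_2^2-C_1\|\nabla u\|_2^{2p\delta_p}-C_2\|\nabla v\|_2^{2q\delta_q}-\beta\xi\eta,
\end{align*}
with $C_1,C_2$ depending only on $\mu_1,\mu_2,p,q,N,\alpha,\xi,\eta$. Because $2p\delta_p<2$ and $2q\delta_q<2$, a Young inequality $x^{2p\delta_p}\le \varepsilon x^2+C_\varepsilon$ (for suitably small $\varepsilon$, say $\varepsilon=\tfrac{1}{4C_1}$, and analogously for $v$) lets me absorb the negative gradient powers into a small fraction of $\|\nabla u\|_2^2+\|\nabla v\|_2^2$, leaving
\begin{align*}
E(u,v)\ge \tfrac14\|\nabla u\|_2^2+\tfrac14\|\nabla v\|_2^2-C(\xi,\eta,\beta,\mu_1,\mu_2,p,q,N,\alpha).
\end{align*}
This gives both the lower bound and coercivity: $E(u_n,v_n)\to+\infty$ whenever $\|\nabla u_n\|_2^2+\|\nabla v_n\|_2^2\to\infty$ on $S_\xi\times S_\eta$.

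There is no real obstacle here; the only place to be careful is that the subcritical assumption is used precisely to guarantee the strict inequalities $2p\delta_p,2q\delta_q<2$, without which Young's inequality would not suffice to absorb the nonlocal terms. Everything else is a direct application of results already recorded, namely \eqref{Gagliardo-Nirenberg inequality}, \eqref{Hardy-Littewood-Sobolev inequality} and the derived estimate \eqref{bu}.
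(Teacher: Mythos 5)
Your proposal is correct and follows essentially the same route as the paper: both apply the estimate \eqref{bu} on the constraint to bound the nonlocal terms by sub-quadratic powers of the gradient norms, bound the coupling term by $\beta\xi\eta$ via Cauchy--Schwarz, and conclude from $2p\delta_p,2q\delta_q<2$. Your added Young-inequality step merely makes explicit the absorption that the paper leaves implicit.
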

\begin{proof}
By \eqref{bu}, it holds that
\begin{align*}
&B(u,p)\le C(N,\alpha,p)\|\nabla u\|_2^{2p\delta_p}\xi^{2p(1-\delta_{p})},\\
&B(v,q)\le C(N,\alpha,q)\|\nabla v\|_2^{2q\delta_q}\eta^{2q(1-\delta_{q})},
\end{align*}
where $\delta_p=\frac{N(p-1)-\alpha}{2p}$.  Thus,
\begin{align*}
E(u,v)\ge& \frac12(\|\nabla u\|_2^2+\|\nabla v\|_2^2)-\frac{\mu_1C(N,\alpha,p)\xi^{2p(1-\delta_{p})}}{2p}\|\nabla u\|_2^{2p\delta_p}\\
&-\frac{\mu_2C(N,\alpha,q)\eta^{2q(1-\delta_{q})}}{2q}\|\nabla v\|_2^{2q\delta_q}-\beta \xi\eta.
\end{align*}
Since $1+\frac{\alpha}{N}<p,q<1+\frac{\alpha+2}{N}$, $2p\delta_p<2$ and $2q\delta_q<2$. Hence we complete the proof.
\end{proof}
From Lemma \ref{lbfb}, there exists a minimizing sequence $\{(u_n,v_n)\}$ for $E|_{S_{\xi}\times S_{\eta}}$. Due to $\beta>0$ and $\|\nabla |u|\|_2^2\le\|\nabla u\|_2^2$, we can assume that $u_n,v_n\ge0$.

In the following, we give the subadditivity of $e(\xi,\eta)$.
\begin{lemma}\label{lsubadd}
There holds that
$e(\xi,\eta)\le e(\xi_1,\eta_1)+e(\xi-\xi_1,\eta-\eta_1)$.
\end{lemma}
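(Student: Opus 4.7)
The plan is the standard translation-and-bump construction for subadditivity of mass-constrained infima; I treat the non-degenerate case $\xi_1\in(0,\xi)$, $\eta_1\in(0,\eta)$, the boundary cases being similar. Fix $\varepsilon>0$. Using density of $C_c^{\infty}$ in $H^1$ together with continuity of $E$ under the subcritical growth of the Choquard nonlinearity, select compactly supported pairs $(u_1,v_1)\in S_{\xi_1}\times S_{\eta_1}$ and $(u_2,v_2)\in S_{\xi-\xi_1}\times S_{\eta-\eta_1}$ with $E(u_i,v_i)$ within $\varepsilon$ of $e(\xi_1,\eta_1)$ and $e(\xi-\xi_1,\eta-\eta_1)$, respectively.

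Next I choose $y_R\in\mathbb{R}^N$ with $|y_R|\to\infty$ fast enough that for $R$ large the supports of $\{u_1,v_1\}$ and $\{u_2(\cdot-y_R),v_2(\cdot-y_R)\}$ are disjoint. Setting $w_R:=u_1+u_2(\cdot-y_R)$ and $z_R:=v_1+v_2(\cdot-y_R)$, disjointness gives $\|w_R\|_2^2=\xi_1^2+(\xi-\xi_1)^2$, $\|z_R\|_2^2=\eta_1^2+(\eta-\eta_1)^2$, and $E(w_R,z_R)=E(u_1,v_1)+E(u_2,v_2)$ exactly, since the $L^2$ inner product, the Choquard cross-integrals, and the coupling integral $\beta\int w_Rz_R$ all split as the sum of the local pieces.

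Since $\xi_1^2+(\xi-\xi_1)^2<\xi^2$ for $\xi_1\in(0,\xi)$, and analogously in $\eta$, the pair $(w_R,z_R)$ falls short of $S_\xi\times S_\eta$, so I close the $L^2$-deficit by adjoining two widely spread bumps. Fix $\phi,\psi\in C_c^{\infty}(\mathbb{R}^N)$ with disjoint supports and $\|\phi\|_2^2=2\xi_1(\xi-\xi_1)$, $\|\psi\|_2^2=2\eta_1(\eta-\eta_1)$, and introduce the dilations $\phi_R(x):=R^{-N/2}\phi((x-a_R)/R)$, $\psi_R(x):=R^{-N/2}\psi((x-b_R)/R)$, where the centers $a_R,b_R$ are pushed to infinity so that $\phi_R,\psi_R$ together with the previous four functions have pairwise disjoint supports for $R$ large. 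The dilation preserves the $L^2$-mass, and a direct change of variables yields $\|\nabla\phi_R\|_2^2=R^{-2}\|\nabla\phi\|_2^2\to 0$ and $B(\phi_R,p)=R^{\alpha-N(p-1)}B(\phi,p)\to 0$; the latter convergence is exactly where the lower bound $p>1+\frac{\alpha}{N}$ enters (and analogously for $\psi_R$ with $q$).

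Assembling, $\tilde u_R:=w_R+\phi_R$ and $\tilde v_R:=z_R+\psi_R$ lie exactly in $S_\xi\times S_\eta$ by disjointness of all supports, and the same disjointness forces $E(\tilde u_R,\tilde v_R)=E(u_1,v_1)+E(u_2,v_2)+E(\phi_R,0)+E(0,\psi_R)$, with the last two contributions vanishing as $R\to\infty$ by the scaling above. Hence $e(\xi,\eta)\le E(\tilde u_R,\tilde v_R)\le e(\xi_1,\eta_1)+e(\xi-\xi_1,\eta-\eta_1)+2\varepsilon+o(1)$, and letting $R\to\infty$ and then $\varepsilon\to 0$ yields the claim. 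The only delicate point is the bump step: the subcritical hypothesis on $p,q$ is precisely what lets the mass-correcting bumps be taken with simultaneously vanishing kinetic and Choquard energy, so no positive residue survives in the limit.
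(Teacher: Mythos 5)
Your proof is correct and, measured against the statement as literally written, it is more careful than the paper's own argument. The paper simply picks disjointly supported near-minimizers $\varphi_\varepsilon\in S_{\xi_1}\times S_{\eta_1}$ and $\psi_\varepsilon\in S_{\xi-\xi_1}\times S_{\eta-\eta_1}$, sets $u_\varepsilon=\varphi_\varepsilon+\psi_\varepsilon$, and asserts $u_\varepsilon\in S_{\xi}\times S_{\eta}$. Under the paper's convention $S_\xi=\{u:\|u\|_2^2=\xi^2\}$ the glued pair has squared masses $\xi_1^2+(\xi-\xi_1)^2\neq\xi^2$, so what that gluing actually establishes is the ``Pythagorean'' subadditivity $e\big(\sqrt{\xi_1^2+\xi_2^2},\sqrt{\eta_1^2+\eta_2^2}\big)\le e(\xi_1,\eta_1)+e(\xi_2,\eta_2)$ --- which is in fact the form invoked later (in Lemma \ref{ll2c}, where $\xi_2^2=\xi^2-\xi_1^2$). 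You noticed exactly this mass deficit and repaired it with the far-away dilated bumps of squared mass $2\xi_1(\xi-\xi_1)$ and $2\eta_1(\eta-\eta_1)$, whose kinetic and Choquard energies vanish; this yields the statement with the mass parameters split linearly, and the same device would also give the squared-mass version. In short, your route buys the literal statement at the cost of one extra (routine) construction, while the paper's shorter gluing only gives the squared-mass version and silently conflates the two.

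One small inaccuracy to fix: disjointness of supports does not make the Choquard cross terms vanish, because $I_\alpha$ is a nonlocal kernel. Indeed
$B(u_1+u_2(\cdot-y_R),p)=B(u_1,p)+B(u_2,p)+2\iint |u_1(x)|^p\,|u_2(y-y_R)|^p\,|x-y|^{\alpha-N}\,dx\,dy$,
and the last integral is strictly positive, so $E(w_R,z_R)=E(u_1,v_1)+E(u_2,v_2)$ does not hold exactly. Fortunately this cross term enters $E$ with a minus sign, so you still get $E(w_R,z_R)\le E(u_1,v_1)+E(u_2,v_2)$, which is all you need (and the cross term tends to $0$ as $|y_R|\to\infty$ in any case); the same remark applies to the cross terms between $w_R$ and $\phi_R$. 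With ``$=$'' replaced by ``$\le$'' in those two places, your argument is complete.
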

\begin{proof}
Denote that $\gamma=(\xi,\eta),~\gamma_1=(\xi_1,\eta_1)\in[0,\xi]\times[0,\eta]$ and $S(\gamma):=S_{\xi}\times S_{\eta}$, one need to show that
\begin{align*}
e(\gamma)\le e(\gamma_1)+e(\gamma-\gamma_1).
\end{align*}
For any $\varepsilon>0$, we may find $\varphi_{\varepsilon},\psi_{\varepsilon}\in C_c^{\infty}(\mathbb{R}^N)\times C_c^{\infty}(\mathbb{R}^N)$ such that
\begin{align*}
&supp\varphi_{\varepsilon}\cap supp\psi_{\varepsilon}=\emptyset,~ \varphi_{\varepsilon}\in S(\gamma_1),~E(\varphi_{\varepsilon})\le e(\gamma_1)+\varepsilon,\\
&\psi_{\varepsilon}\in S(\gamma-\gamma_1),~E(\psi_{\varepsilon})\le e(\gamma-\gamma_1)+\varepsilon.
\end{align*}
Set $u_{\varepsilon}=\varphi_{\varepsilon}+\psi_{\varepsilon}$. Since $supp\varphi_{\varepsilon}\cap supp\psi_{\varepsilon}=\emptyset$, we can observe that $u_{\varepsilon}\in S(\gamma)$. Moreover,
\begin{align*}
e(\gamma)\le E(u_{\varepsilon})\le E(\varphi_{\varepsilon})+E(\psi_{\varepsilon})\le  e(\gamma_1)+ e(\gamma-\gamma_1)+2\varepsilon.
\end{align*}
Since $\varepsilon$ is arbitrary, we can establish the lemma.
\end{proof}
\begin{remark}\label{r31}
If $e(\gamma)$ is achieved, then $e(\gamma)<e(\gamma_1)+ e(\gamma-\gamma_1)$.
\end{remark}

Let $u^{*}$  be the Schwartz rearrangement of $u$.  Then $\|\nabla u^{*}\|_2^2\le\|\nabla u\|_2^2$.  Denote that $S_{c}^{r}:=S_{c}\cap H_{r}^{1}(\mathbb{R}^N)$.

\begin{lemma}
There holds that
\begin{align*}
\inf_{S_{\xi}^{r}\times S_{\eta}^{r}}E(u,v)=\inf_{S_{\xi}\times S_{\eta}}E(u,v).
\end{align*}
\end{lemma}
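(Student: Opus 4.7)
The inclusion $\inf_{S_{\xi}^{r}\times S_{\eta}^{r}}E(u,v)\ge\inf_{S_{\xi}\times S_{\eta}}E(u,v)$ is immediate from $S_{\xi}^{r}\times S_{\eta}^{r}\subset S_{\xi}\times S_{\eta}$, so the whole content is the reverse inequality. The plan is to show, for an arbitrary $(u,v)\in S_{\xi}\times S_{\eta}$, that the radial pair $(|u|^{*},|v|^{*})\in S_{\xi}^{r}\times S_{\eta}^{r}$ satisfies $E(|u|^{*},|v|^{*})\le E(u,v)$; passing to the infimum then gives the claim.

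First I would pass from $(u,v)$ to $(|u|,|v|)$. This clearly preserves the $L^{2}$-constraints, does not increase $\|\nabla\cdot\|_{2}^{2}$, leaves $B(u,p)$ and $B(v,q)$ unchanged, and (since $\beta>0$ is a constant in the setting of Theorem \ref{t1}) replaces $-\beta\int_{\mathbb{R}^N}uv\,dx$ by the no-larger quantity $-\beta\int_{\mathbb{R}^N}|u||v|\,dx$. Hence $E(|u|,|v|)\le E(u,v)$, and we may assume $u,v\ge0$.

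Next I would replace $(u,v)$ by its Schwartz rearrangement $(u^{*},v^{*})$. The Pólya--Szegő inequality gives
\begin{align*}
\|\nabla u^{*}\|_{2}^{2}\le\|\nabla u\|_{2}^{2},\qquad \|\nabla v^{*}\|_{2}^{2}\le\|\nabla v\|_{2}^{2},
\end{align*}
while equimeasurability preserves $\|u^{*}\|_{2}=\|u\|_{2}$ and $\|v^{*}\|_{2}=\|v\|_{2}$, so $(u^{*},v^{*})\in S_{\xi}^{r}\times S_{\eta}^{r}$. Applying Lemma \ref{lra} with $f=h=u^{p}$ and $g(x)=|x|^{\alpha-N}$, and observing that $g$ is already radially symmetric decreasing (so $g^{*}=g$), yields $B(u,p)\le B(u^{*},p)$, and similarly $B(v,q)\le B(v^{*},q)$. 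For the coupling term, the same lemma applied with $f=u$, $h=v$, and $g=\mathbf{1}_{\{0\}}$ — or more directly the Hardy--Littlewood rearrangement inequality $\int fg\,dx\le\int f^{*}g^{*}\,dx$ — gives $\int_{\mathbb{R}^N}uv\,dx\le\int_{\mathbb{R}^N}u^{*}v^{*}\,dx$, so $-\beta\int u^{*}v^{*}\le -\beta\int uv$. Summing these four inequalities produces $E(u^{*},v^{*})\le E(u,v)$, which completes the argument.

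I do not expect any serious obstacle; the proof is essentially a bookkeeping exercise making sure that every non-quadratic term in $E(u,v)$ cooperates with rearrangement. The only mildly delicate point is the coupling term, where one must use that $\beta$ is a positive constant (as is assumed throughout this subsection) so that the Hardy--Littlewood inequality applies directly; if $\beta$ were a non-constant, non-symmetric-decreasing function, this step would fail.
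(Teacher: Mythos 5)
Your proposal is correct and follows essentially the same route as the paper: symmetrize via Schwartz rearrangement, use P\'olya--Szeg\H{o} for the gradient terms, the Riesz rearrangement inequality (Lemma \ref{lra}) for the Choquard terms, and the Hardy--Littlewood inequality together with $\beta>0$ for the coupling term. If anything, your write-up is slightly more careful than the paper's, since you make explicit the preliminary reduction to $(|u|,|v|)$ that the paper leaves implicit.
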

\begin{proof}
On the one hand, since $S_{\xi}^{r}\times S_{\eta}^{r}\subset S_{\xi}\times S_{\eta}$, it can be derived that
\begin{align*}
\inf_{S_{\xi}^{r}\times S_{\eta}^{r}}E(u,v)\ge\inf_{S_{\xi}\times S_{\eta}}E(u,v).
\end{align*}
On the other hand, for any $(u,v)\in S_{\xi}\times S_{\eta}$, let $(u^{*},v^{*})$ be the Schwartz rearrangement of $(u,v)$. By Lemma \ref{lra}, it holds that $B(u^{*},p)\ge B(u,p)$.  Therefore,
\begin{align*}
E(u^{*},v^{*})
=&\frac{1}{2}(\|\nabla u^{*}\|_{2}^{2}+\|\nabla v^{*}\|_2^2)
-\frac{\mu_1}{2p}B(u^{*},p)-\frac{\mu_2}{2q}B(v^{*},q) -\beta\int_{\mathbb{R}^N}u^{*}v^{*}dx\\
\le&\frac{1}{2}(\|\nabla u\|_{2}^{2}+\|\nabla v\|_2^2)
-\frac{\mu_1}{2p}B(u,p)-\frac{\mu_2}{2q}B(v,q) -\beta\int_{\mathbb{R}^N}uvdx,
\end{align*}
hence the lemma holds.
\end{proof}
\begin{remark}\label{r1}
From the above lemma, we can suppose that $(u_n,v_n)\in H_{r}^{1}(\mathbb{R}^N)\times H_{r}^{1}(\mathbb{R}^N)$. From Lemma \ref{lbfb}, it can be deduced that $(u_n,v_n)$ is bounded in $H_{r}^{1}(\mathbb{R}^N)\times H_{r}^{1}(\mathbb{R}^N)$. Therefore, there exists $(u_0,v_0)\in H_{r}^{1}(\mathbb{R}^N)\times H_{r}^{1}(\mathbb{R}^N)$ such that $(u_n,v_n)\rightharpoonup(u_0,v_0)$ weakly in $H_{r}^{1}(\mathbb{R}^N)\times H_{r}^{1}(\mathbb{R}^N)$ and strongly in $L^{t}(\mathbb{R}^N)\times L^{t}(\mathbb{R}^N)$ for $t\in(2,2_{s}^{*})$. By \eqref{bu}, it follows that $B(u_n,p)\rightarrow B(u_0,p)$ and $B(v_n,q)\rightarrow B(v_0,q)$.
\end{remark}
In order to obtain the critical point of $E(u,v)$ on $S_{\xi}\times S_{\eta}$, we need to show the strong convergence of minimizing sequence in $L^2(\mathbb{R}^N)\times L^2(\mathbb{R}^N)$.

\begin{lemma}\label{ll2c}
There holds that
\begin{align*}
\|u_0\|_2^2=\xi^2,~\|v_0\|_2^2=\eta^2.
\end{align*}
\end{lemma}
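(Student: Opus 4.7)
The plan is to argue by contradiction. Set $\xi_1:=\|u_0\|_2$ and $\eta_1:=\|v_0\|_2$; weak lower semicontinuity of the $L^2$-norm under the weak $H^1$-convergence from Remark \ref{r1} gives $\xi_1\le\xi$ and $\eta_1\le\eta$. Suppose $(\xi_1,\eta_1)\ne(\xi,\eta)$ and set $\xi_2:=\sqrt{\xi^2-\xi_1^2}$, $\eta_2:=\sqrt{\eta^2-\eta_1^2}$, so that at least one of $\xi_2,\eta_2$ is strictly positive. The goal is to contradict Lemma \ref{lsubadd}.

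The first step is a Brezis--Lieb decomposition of $E(u_n,v_n)$. I would combine the standard Brezis--Lieb identities for $\|\cdot\|_2$ and $\|\nabla\cdot\|_2$ with Lemma \ref{lbl} for the two Choquard terms, and use the weak $L^2$-convergences $u_n-u_0\rightharpoonup 0$ and $v_n-v_0\rightharpoonup 0$ to split the bilinear coupling as
\begin{align*}
\int_{\mathbb{R}^N} u_nv_n\,dx=\int_{\mathbb{R}^N} u_0v_0\,dx+\int_{\mathbb{R}^N} (u_n-u_0)(v_n-v_0)\,dx+o(1).
\end{align*}
Assembling these yields $E(u_n,v_n)=E(u_0,v_0)+E(u_n-u_0,v_n-v_0)+o(1)$. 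Moreover, Remark \ref{r1} gives $B(u_n,p)\to B(u_0,p)$, $B(v_n,q)\to B(v_0,q)$, so Lemma \ref{lbl} forces $B(u_n-u_0,p)\to 0$ and $B(v_n-v_0,q)\to 0$; in particular the residual energy reduces to gradient plus coupling.

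Next I pass to inequalities. Since $(u_0,v_0)\in S_{\xi_1}\times S_{\eta_1}$, one has $E(u_0,v_0)\ge e(\xi_1,\eta_1)$, with the convention $e(c,0)=m(c,\mu_1)$ and $e(0,c)=m(c,\mu_2)$ in the degenerate coordinates. Rescaling $u_n-u_0$ and $v_n-v_0$ by factors tending to $1$ so as to land exactly on $S_{\xi_2}\times S_{\eta_2}$, and noting that $E$ is continuous under such rescalings (gradient, Choquard and coupling terms all scale continuously in the factor), gives $\liminf_n E(u_n-u_0,v_n-v_0)\ge e(\xi_2,\eta_2)$. Passing to the limit in the decomposition yields
\begin{align*}
e(\xi,\eta)=\lim_n E(u_n,v_n)\ge e(\xi_1,\eta_1)+e(\xi_2,\eta_2).
\end{align*}

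The main obstacle is closing the loop: combined with the reverse direction Lemma \ref{lsubadd} this produces only the non-strict equality $e(\xi,\eta)=e(\xi_1,\eta_1)+e(\xi_2,\eta_2)$, and to reach a contradiction I need the \emph{strict} inequality $e(\xi,\eta)<e(\xi_1,\eta_1)+e(\xi_2,\eta_2)$ for every nontrivial split. The intended mechanism, in the spirit of Remark \ref{r31}, is to exploit $\beta>0$: taking near-minimizers of the two halves (positive and radial thanks to Schwarz symmetrization) and \emph{overlapping} them rather than separating their supports as in Lemma \ref{lsubadd} produces an admissible competitor whose coupling contribution $-\beta\int uv\,dx$ is strictly smaller, giving the required strict inequality. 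The degenerate sub-cases where $\xi_2$ or $\eta_2$ vanishes are handled by the scalar strict subadditivity in Lemma \ref{yt1}(i)(2), while the complete-vanishing case $(\xi_1,\eta_1)=(0,0)$ is excluded by comparing with a test pair that shows $e(\xi,\eta)<0$ whereas the residual energy $\lim E(u_n,v_n)$ would only be bounded below by $-\beta\xi\eta$ plus nonnegative gradient terms. This strict-subadditivity step is the crux of the argument and the place where the structural assumption $\beta>0$ is essential.
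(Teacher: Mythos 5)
Your skeleton (contradiction, Br\'ezis--Lieb decomposition $E(u_n,v_n)=E(u_0,v_0)+E(u_n-u_0,v_n-v_0)+o(1)$, vanishing of the Choquard terms of the remainder, comparison with the subadditivity of $e$) matches the paper's up to the last step, but the last step --- the only place strictness can enter --- is a genuine gap. You reduce everything to the claim that $e(\xi,\eta)<e(\xi_1,\eta_1)+e(\xi_2,\eta_2)$ for every nontrivial split, and you do not prove it. Lemma \ref{lsubadd} only gives the non-strict inequality; Remark \ref{r31} gives strictness only under the hypothesis that the infimum is attained, which is precisely what this lemma is being used to establish, so invoking it here is circular. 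Your proposed mechanism for strictness --- overlapping near-minimizers of the two halves so that the coupling term $-\beta\int uv$ decreases --- does not work as stated: once the supports overlap, $\|\varphi_1+\varphi_2\|_2^2=\xi_1^2+\xi_2^2+2\langle\varphi_1,\varphi_2\rangle>\xi^2$, so the competitor leaves $S_\xi\times S_\eta$; and after renormalizing back onto the sphere the gradient cross term $2\int\nabla\varphi_1\cdot\nabla\varphi_2$ (which is \emph{positive} for positive radially decreasing profiles) and the change in the Choquard terms are uncontrolled, so there is no reason the total energy drops below the sum. The degenerate case $(\xi_1,\eta_1)=(0,0)$ is also mishandled: knowing $e(\xi,\eta)<0$ is not enough, since the residual energy is only bounded below by $-\beta\xi\eta<0$; you need $e(\xi,\eta)<-\beta\xi\eta$.

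The paper avoids strict subadditivity altogether, and this is where your bound $\liminf_nE(u_n-u_0,v_n-v_0)\ge e(\xi_2,\eta_2)$ throws away exactly the information that closes the argument. Because the remainder $(\hat u_n,\hat v_n)$ has vanishing Choquard energy (radial compactness), one has the \emph{stronger} lower bound $E(\hat u_n,\hat v_n)\ge-\beta\xi_2\eta_2+o(1)$. On the other hand, the explicit competitor $(u,\tfrac{\eta_2}{\xi_2}u)$ with $u$ a minimizer for $m(\xi_2,\tfrac{\mu_1}{2})<0$ (Lemma \ref{yt1}) shows $e(\xi_2,\eta_2)\le 2F_{\frac{\mu_1}{2}}(u)-\beta\xi_2\eta_2<-\beta\xi_2\eta_2$. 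Hence the remainder's energy exceeds $e(\xi_2,\eta_2)$ by the fixed amount $-2F_{\frac{\mu_1}{2}}(u)>0$, and
\begin{align*}
e(\xi,\eta)\ \ge\ e(\xi_1,\eta_1)+e(\xi_2,\eta_2)-2F_{\frac{\mu_1}{2}}(u)\ >\ e(\xi_1,\eta_1)+e(\xi_2,\eta_2)\ \ge\ e(\xi,\eta),
\end{align*}
a contradiction using only the non-strict Lemma \ref{lsubadd}; the half-degenerate cases are handled the same way with $m(\eta_2,\mu_2)<0$. You should restructure your final step along these lines rather than trying to prove strict subadditivity of $e$.
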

\begin{proof}
From Remark \ref{r1}, $(u_n,v_n)\rightharpoonup(u_0,v_0)$ weakly in $H_{r}^{1}(\mathbb{R}^N)\times H_{r}^{1}(\mathbb{R}^N)$. Consequently,
\begin{align*}
\|u_0\|_2^2\le\liminf_{n\rightarrow\infty}\|u_n\|_2^2=\xi^2,~\|v_0\|_2^2\le\liminf_{n\rightarrow\infty}\|v_n\|_2^2=\eta^2.
\end{align*}
Next we show that $"="$ holds by contradiction. Set $\|u_0\|_2=\xi_1~\|v_0\|_2=\eta_1$. There exist three cases:
\begin{align*}
\left\{
\begin{array}{ll}
\textbf{Case } 1: & \hbox{$\xi_1<\xi,~\eta_1<\eta$;} \\
\textbf{Case } 2: & \hbox{$\xi_1=\xi,~\eta_1<\eta$;} \\
\textbf{Case }3: & \hbox{$\xi_1<\xi,~\eta_1=\eta$.}
\end{array}
\right.
\end{align*}
Set $\hat{u}_n=u_n-u_0$ and $\hat{v}_n=v_n-v_0$, denote that $\|\hat{u}_n\|_2^2:=\xi_2^2=\xi^2-\xi_1^2,~\|\hat{v}_n\|_2^2:=\eta_2^2=\eta^2-\eta_1^2$.

For \textbf{Case} 1, we have $\xi_2>0$ and $\eta_2>0$. By Br\'ezis-Lieb lemma in \cite{BL83}, it holds
\begin{align}\label{e01}
\begin{split}
e(\xi,\eta)+o_n(1)=E(u_n,v_n)&= E(u_0,v_0)+E(\hat{u}_n,\hat{v}_n)+o_n(1)\\
&\ge e(\xi_1,\eta_1)+E(\hat{u}_n,\hat{v}_n).
\end{split}
\end{align}
From Remark \ref{r1}, $B(\hat{u}_n,\hat{v}_n)=0$. As a consequence,
\begin{align}\label{ehat}
E(\hat{u}_n,\hat{v}_n)=\frac{1}{2}(\|\nabla\hat{u}_n\|_2^2+\|\nabla\hat{v}_n\|_2^2)-\beta\int_{\mathbb{R}^N}\hat{u}_n\hat{v}_ndx+o(1)\ge-\beta\xi_2\eta_2+o(1).
\end{align}
Without loss of generality, we may assume that $\xi_2^2\ge\eta_2^2$. For any $u\in S_{\xi_2}$,  it yields that $(u,\frac{\eta_2}{\xi_2}u)\in S_{\xi_2}\times S_{\eta_2}$. Therefore,
\begin{align*}
E(u,\frac{\eta_2}{\xi_2}u)
=&\frac{1}{2}\bigg(1+\frac{\eta_2^2}{\xi_2^2}\bigg)\|\nabla u\|^2
-\frac{\mu_1}{2p}B(u,p)-\frac{\mu_2}{2q}\frac{\eta_2^{2q}}{\xi_2^{2q}}B(u,q)-\beta\xi_2\eta_2\\
\le&2\bigg(\frac{1}{2}\|\nabla u\|^2-\frac{\mu_1}{2}\frac{1}{2p}B(u,p)\bigg)-\beta\xi_2\eta_2\\
=&2F_{\frac{\mu_1}{2}}(u)-\beta\xi_2\eta_2.
\end{align*}
Applying  Lemma
\ref{yt1}, let $u$ satisfy $m(\xi_2,\frac{\mu_1}{2})=F_{\frac{\mu_1}{2}}(u)<0$. Then
\begin{align}\label{e2}
e(\xi_2,\eta_2)\le E(u,\frac{\eta_2}{\xi_2}u)\le 2F_{\frac{\mu_1}{2}}(u)-\beta\xi_2\eta_2.
\end{align}
By \eqref{e01}-\eqref{e2} and Lemma \ref{lsubadd},  we can derive that
\begin{align*}
e(\xi,\eta)&\ge e(\xi_1,\eta_1)+e(\xi_2,\eta_2)-2F_{\frac{\mu_1}{2}}(u)\\
&> e(\xi_1,\eta_1)+e(\xi_2,\eta_2)\\
&\ge e(\xi,\eta),
\end{align*}
which is a contradiction. Hence \textbf{Case} 1  does not hold.

Next we show \textbf{Case} 2 does not hold by contradiction. For \textbf{Case} 2, we have $\xi_2=0,~\eta_2>0$. Similar to \textbf{Case} 1, it can be derived that
\begin{align*}
e(\xi,\eta)= e(\xi,\eta_1)+E(\hat{u}_n,\hat{v}_n)+o_n(1),
\end{align*}
by $B(u_n,p)\rightarrow B(u_0,p),B(v_n,q)\rightarrow B(v_0,q)$ and $u_n,v_n\ge0$, we have
\begin{align*}
E(\hat{u}_n,\hat{v}_n)
=\frac{1}{2}(\|\nabla\hat{u}_n\|_2^2+\|\nabla\hat{v}_n\|_2^2)-\beta\int_{\mathbb{R}^N}\hat{u}_n\hat{u}_ndx+o(1)\ge o(1).
\end{align*}
where $0\le\beta\int_{\mathbb{R}^N}\hat{u}_n\hat{v}_ndx\le\beta\|\hat{u}_n\|_2\|\hat{v}_n\|_2=0$.
From $e(0,\eta_2)=m(\eta_2,\mu)=F_{\mu}(u)+o(1)<0$, we obtain  that
\begin{align*}
e(\xi,\eta)&\ge e(\xi,\eta_1)+E(\hat{u}_n,\hat{v}_n)\ge e(\xi,\eta_1)+o(1)\\
&=e(\xi,\eta_1)+e(0,\eta_2)-F_{\mu}(u)+o(1)>e(\xi,\eta),
\end{align*}
which is a contradiction. Hence \textbf{Case} 2 does not hold. Moreover, similar to the proof of \textbf{Case} 2, \textbf{Case} 3 does not hold. Hence the lemma holds.
\end{proof}

\begin{proof}[\textbf{Proof of Theorem \ref{t1}}]
From Lemma \ref{ll2c}, $(u_n,v_n)\rightarrow(u_0,v_0)$ strongly in $L^2(\mathbb{R}^N)\times L^2(\mathbb{R}^N)$. Applying $(u_n,v_n)$ is a minimizing sequence, there holds that
\begin{align*}
e(\xi,\eta)\le E(u_0,v_0)\le\liminf_{n\rightarrow\infty}E(u_n,v_n)=e(\xi,\eta),
\end{align*}
which implies that $(u_0,v_0)$ is a minimizer. Moreover, $(u_0,v_0)$ satisfies
\begin{align*}
\left\{\begin{aligned}
&-\Delta u_0+\lambda_1 u_0=\mu_1(I_{\alpha}\star|u_0|^p)|u_0|^{p-2}u+\beta v_0,\\
&-\Delta v_0+\lambda_2 v_0=\mu_2(I_{\alpha}\star|v_0|^q)|v_0|^{q-2}u+\beta u_0,
\end{aligned}
\right.
\end{align*}
with $\|u_0\|_2^2=\xi^2$ and $\|v_0\|_2^2=\eta^2$. Finally by maximum principle and $u_0,v_0\ge0$, there holds that $u_0,v_0>0$. Hence the theorem is established.
\end{proof}

\section{$L^2$-supercritical case}
In this section, we mainly consider $1+\frac{\alpha+2}{N}<p<\frac{\alpha+N}{N-2}$. We work in radial setting. For any $(u,v)\in H^{1}_{r}(\mathbb{R}^N)\times H^{1}_{r}(\mathbb{R}^N)$, define the map
\begin{align*}
s\star(u,v)=(s\star u,s\star v):=(e^{\frac{Ns}{2}}u(e^{sx}),e^{\frac{Ns}{2}}v(e^{sx})).
\end{align*}
It is obvious to observe that $\|s\star u\|_2^2=\|u\|_2^2$ and $\|s\star v\|_2^2=\|v\|_2^2$. Denote that $S_{c}^{r}=S(c)\cap H^{1}_{r}(\mathbb{R}^N)$.
\begin{lemma}\label{sinfty}
Let $(u,v)\in S^{r}_{\xi}\times S^{r}_{\eta}$. Then
\begin{align*}
\lim_{s\rightarrow-\infty}\|\nabla(s\star u)\|_2^2+\|\nabla(s\star v)\|_2^2=0^{+},~\lim_{s\rightarrow\infty}\|\nabla(s\star u)\|_2^2+\|\nabla(s\star v)\|_2^2=\infty,
\end{align*}
and
\begin{align*}
\lim_{s\rightarrow-\infty}E(s\star(u,v))=-\lim_{s\rightarrow-\infty}\int_{\mathbb{R}^N}\beta(e^{-s}x)uvdx,~\lim_{s\rightarrow\infty}E(s\star(u,v))=-\infty.
\end{align*}
\end{lemma}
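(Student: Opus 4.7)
The plan is to carry out the scaling computations for each piece of $E(s\star(u,v))$ separately, then read off the limits from the different exponential rates. Throughout, I will use the change of variables $y = e^{s}x$, under which $dy = e^{Ns}\,dx$ and $|x-y| = e^{-s}|x'-y'|$ after a two-variable substitution.

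First I would record the three basic identities. By direct substitution,
\[
\|\nabla(s\star u)\|_2^2 = e^{2s}\|\nabla u\|_2^2, \qquad \|\nabla(s\star v)\|_2^2 = e^{2s}\|\nabla v\|_2^2.
\]
For the nonlocal term, the change of variables gives (using $\delta_p = \frac{N(p-1)-\alpha}{2p}$, so that $Np - N - \alpha = 2p\delta_p$)
\[
B(s\star u, p) = e^{2p\delta_p s} B(u,p), \qquad B(s\star v, q) = e^{2q\delta_q s} B(v,q).
\]
For the coupling term, the factor $e^{Ns}$ from the two profiles cancels the Jacobian, leaving
\[
\int_{\mathbb{R}^N} \beta(x)(s\star u)(s\star v)\,dx = \int_{\mathbb{R}^N} \beta(e^{-s}x)\,u(x)v(x)\,dx.
\]

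With these, the first pair of claims is immediate: since $p,q > 1+\tfrac{\alpha}{N}$ forces $\delta_p,\delta_q > 0$, both $\|\nabla(s\star u)\|_2^2 + \|\nabla(s\star v)\|_2^2 = e^{2s}(\|\nabla u\|_2^2+\|\nabla v\|_2^2)$ tends to $0$ as $s\to -\infty$ and to $\infty$ as $s\to +\infty$. Substituting into $E$,
\[
E(s\star(u,v)) = \tfrac{e^{2s}}{2}\bigl(\|\nabla u\|_2^2 + \|\nabla v\|_2^2\bigr) - \tfrac{\mu_1}{2p}e^{2p\delta_p s}B(u,p) - \tfrac{\mu_2}{2q}e^{2q\delta_q s}B(v,q) - \int_{\mathbb{R}^N}\beta(e^{-s}x)uv\,dx.
\]
As $s\to -\infty$ the first three terms vanish because their exponential prefactors $e^{2s}$, $e^{2p\delta_p s}$, $e^{2q\delta_q s}$ all tend to $0$, and one is left precisely with $-\lim_{s\to -\infty}\int \beta(e^{-s}x)uv\,dx$.

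For the limit at $+\infty$, the hypothesis $p=q > 1+\tfrac{\alpha+2}{N}$ of Theorem \ref{t2} is equivalent to $2p\delta_p > 2$. Hence $e^{2p\delta_p s}$ beats $e^{2s}$ as $s\to +\infty$, and since $B(u,p)>0$ the negative Choquard contribution dominates the positive gradient contribution. The coupling term remains uniformly bounded because $\|\beta\|_\infty < \infty$ (assumption (ii) of Theorem \ref{t2}) gives $\bigl|\int \beta(e^{-s}x)uv\,dx\bigr| \le \|\beta\|_\infty\,\xi\eta$. Combining, $E(s\star(u,v))\to -\infty$.

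There is no serious obstacle here; the only point to be alert about is that the coupling term is not a pure power of $e^{s}$, so one cannot simply factor it out of $E$. Boundedness of $\beta$ handles this once and for all: it guarantees the coupling term cannot disrupt the dominant balance at $s\to +\infty$ and it forces the other three terms to dictate the $s\to -\infty$ behavior up to the residual integral that the lemma explicitly leaves unevaluated.
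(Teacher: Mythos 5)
Your proposal is correct and follows exactly the route the paper intends: the paper's own proof of this lemma is just the remark ``by direct computation, the lemma can be established,'' and your scaling identities $\|\nabla(s\star u)\|_2^2=e^{2s}\|\nabla u\|_2^2$, $B(s\star u,p)=e^{2p\delta_p s}B(u,p)$, and the change of variables turning the coupling term into $\int_{\mathbb{R}^N}\beta(e^{-s}x)uv\,dx$ are precisely the omitted details. The exponent comparisons ($\delta_p,\delta_q>0$ for the $s\to-\infty$ limit, $2p\delta_p>2$ plus boundedness of the coupling term for the $s\to+\infty$ limit) are all correctly identified, so nothing is missing.
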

\begin{proof}
By direct computation, the lemma can be established. Hence we omit the details.
\end{proof}

\begin{lemma}\label{lh}
Let $h(x)$ be defined as in \eqref{h}. Then there holds
\begin{itemize}
  \item[(i)] there exists a $x_0>0$ such that $h(0)=h(x_0)=0$;
  \item[(ii)]there exists a $x_1>0$ such that $h(x_1)=\max\limits_{x\ge0}\{h(x)\}$.
\end{itemize}
\end{lemma}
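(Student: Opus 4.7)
The plan is to treat both assertions as a direct calculus exercise for the one-variable function $h$, using only the exponent information coming from the supercritical assumption.

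First I would record the key sign information on the exponent $p\delta_p$. Since we are in the regime $1+\frac{\alpha+2}{N}<p<\frac{N+\alpha}{N-2}$, a one-line computation gives $p\delta_p=\frac{N(p-1)-\alpha}{2}>1$. This is the only structural fact about $h$ that I will use: $h(x)=\tfrac12 x-\tfrac{C}{2p}x^{\theta}$ with $\theta:=p\delta_p>1$ and $C>0$.

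For part (i), $h(0)=0$ is immediate. For $x>0$, factoring gives
\begin{equation*}
h(x)=\tfrac{x}{2}\bigl(1-\tfrac{C}{p}x^{\theta-1}\bigr),
\end{equation*}
so $h(x)=0$ on $(0,\infty)$ exactly when $x^{\theta-1}=p/C$. Because $\theta-1>0$, this has the unique positive solution $x_0=(p/C)^{1/(\theta-1)}$, which proves (i).

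For part (ii), I differentiate: $h'(x)=\tfrac12-\tfrac{C\theta}{2p}x^{\theta-1}$. Since $\theta-1>0$, $h'$ is strictly decreasing on $[0,\infty)$ with $h'(0)=1/2>0$ and $h'(x)\to-\infty$ as $x\to\infty$, so $h'$ has a unique zero $x_1=(p/(C\theta))^{1/(\theta-1)}>0$. Equivalently, $h''(x)=-\tfrac{C\theta(\theta-1)}{2p}x^{\theta-2}<0$ on $(0,\infty)$, so $h$ is strictly concave on $(0,\infty)$; combined with $h(0)=0$ and $h(x)\to-\infty$ as $x\to\infty$, this forces $x_1$ to be the unique maximizer of $h$ on $[0,\infty)$, giving (ii).

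There is no real obstacle here: the supercritical hypothesis $p>1+\frac{\alpha+2}{N}$ guarantees $\theta>1$, which is all that is needed to turn both statements into elementary monotonicity and concavity observations. I would keep the write-up to a few lines, noting explicitly the identity $x_1<x_0$ (from $\theta>1$) since this ordering is implicit in how $h$ is used elsewhere in the paper.
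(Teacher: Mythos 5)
Your proof is correct and follows essentially the same route as the paper: both reduce everything to the single fact $p\delta_p>1$ and then do elementary one-variable calculus on $h$ (the paper finds the unique zero $x_1$ of $h'$ first and deduces the existence of $x_0$ from the increasing--decreasing shape of $h$, while you additionally give the closed forms $x_0=(p/C)^{1/(\theta-1)}$ and $x_1=(p/(C\theta))^{1/(\theta-1)}$). The explicit ordering $x_1<x_0$ you note is a harmless and useful addition consistent with how $K_1$ and $K_2$ are chosen later.
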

\begin{proof}
Direct computation shows that
\begin{align*}
h'(x)=\frac{1}{2}-\frac{Cp\delta_{p}}{2p}x^{p\delta_{p}-1},
\end{align*}
due to $p\delta_{p}-1>0$, it is easily to obtain that $h'(x)=0$ has a unique solution, denote by $x_1$. Hence $h(x)$ is increasing in $(0,x_1)$ and decreasing in $(x_1,+\infty)$. Since $h(0)=0,~h(x_1)=\max\{h(x)\}>0$,  there exists a $x_0>0$ such that $h(0)=h(x_0)=0$. Therefore, we can establish the lemma.
\end{proof}

By \eqref{bu}, it follows that
\begin{align}\label{estimate u^p}
\begin{split}
\mu_1B(u,p)+\mu_2B(v,p)\le &C(N,\alpha,p)(\mu_1\|\nabla u\|_2^{2p\delta_{p}}\xi^{2p(1-\delta_{p})}+\mu_2\|\nabla v\|_2^{2p\delta_{p}}\eta^{2p(1-\delta_{p})})\\
=:&
C_{\xi,\eta}(N,\alpha,p,\mu_1,\mu_2)(\|\nabla u\|_2^{2p\delta_{p}}+\|\nabla v\|_2^{2p\delta_{p}})\\
\le&C_{\xi,\eta}(N,\alpha,p,\mu_1,\mu_2)(\|\nabla u\|_2^2+\|\nabla v\|_2^2)^{p\delta_{p}}.
\end{split}
\end{align}
The next lemma enlightens the mountain pass structure of the problem.
\begin{lemma}\label{lmps}
There exist a constant $K_1>0$ sufficient small, a constant $K_2$ satisfying $K_2>K_1$  and $\|\beta(x)\|_{\infty}< \frac{\max\{h(x)\}}{2\xi\eta}$. Define the sets
\begin{align*}
&\Omega=\{(u,v)\in S^{r}_{\xi}\times S^{r}_{\eta},\|\nabla u\|_2^2+\|\nabla v\|_2^2\le K_1\},\\
&\Pi=\{(u,v)\in S^{r}_{\xi}\times S^{r}_{\eta},\|\nabla u\|_2^2+\|\nabla v\|_2^2= K_2\}.
\end{align*}
There holds
\begin{itemize}
  \item [(i)] for any $(u,v)\in\Omega$, there exists $C_1>0$ such that $E(u,v)>-C_1$;
  \item [(ii)] $\sup\limits_{\Omega}E(u,v)<\inf\limits_{\Pi}E(u,v)$;
  \item[(iii)]$\inf\limits_{\Pi}E(u,v)>0$.
\end{itemize}
\end{lemma}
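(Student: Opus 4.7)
The plan is to establish matching two-sided bounds for $E(u,v)$ on $S^r_\xi \times S^r_\eta$ depending only on $t := \|\nabla u\|_2^2 + \|\nabla v\|_2^2$, and then to exploit the shape of $h$ from Lemma \ref{lh} to pick $K_1$ and $K_2$ for which the three mountain-pass conditions become automatic. Concretely, using estimate \eqref{estimate u^p} (which is exactly what furnishes the constant $C$ in the definition of $h$) together with the coupling bound $\bigl|\int_{\mathbb{R}^N}\beta(x)uv\,dx\bigr| \le \|\beta(x)\|_\infty \xi\eta$, I would show
\begin{align*}
h(t) - \|\beta(x)\|_\infty \xi\eta \,\le\, E(u,v) \,\le\, \frac{t}{2} + \|\beta(x)\|_\infty \xi\eta
\end{align*}
for every $(u,v)\in S^r_\xi \times S^r_\eta$. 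The lower bound drops the coupling using its absolute value; the upper bound drops the two nonnegative Choquard terms.

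With these estimates in hand I would invoke the hypothesis $\|\beta(x)\|_\infty < \max\{h(x)\}/(2\xi\eta)$ to pick $K_2$ close to the maximizer $x_1$ of $h$ supplied by Lemma \ref{lh}(ii), so that $h(K_2) > 2\|\beta(x)\|_\infty \xi\eta$; this is possible precisely because $2\|\beta(x)\|_\infty \xi\eta < \max h$. Then I would take $K_1\in(0,K_2)$ small enough that $K_1/2 < h(K_2) - 2\|\beta(x)\|_\infty \xi\eta$.

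The three claims then follow directly from the two-sided bound. Condition (i) comes from the lower bound together with continuity of $h$ on the compact interval $[0,K_1]$, giving $E(u,v) \ge -C_1$ on $\Omega$ with $C_1 := \|\beta(x)\|_\infty\xi\eta - \min_{[0,K_1]} h$. For (ii), the upper bound applied on $\Omega$ yields $E \le K_1/2 + \|\beta(x)\|_\infty\xi\eta$, while the lower bound applied on $\Pi$ yields $E \ge h(K_2) - \|\beta(x)\|_\infty\xi\eta$, and the choice of $K_1$ makes the first quantity strictly smaller than the second. For (iii), the same lower bound on $\Pi$ gives $E \ge h(K_2) - \|\beta(x)\|_\infty\xi\eta > \|\beta(x)\|_\infty\xi\eta \ge 0$. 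The only delicate point is the calibration of $K_1$ relative to $K_2$; the \emph{strict} inequality in the hypothesis on $\|\beta(x)\|_\infty$ is precisely what provides the slack needed to carry it out.
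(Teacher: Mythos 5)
Your proposal is correct and follows essentially the same route as the paper: both rest on the two-sided bound $h(t)-\|\beta\|_\infty\xi\eta\le E(u,v)\le \tfrac{t}{2}+\|\beta\|_\infty\xi\eta$ with $t=\|\nabla u\|_2^2+\|\nabla v\|_2^2$, obtained from \eqref{estimate u^p} and the coupling estimate, and then choose $K_2$ at (or near) the maximizer $x_1$ of $h$ and $K_1$ small. The only cosmetic difference is that the paper fixes $K_2=x_1$ outright, while you take $K_2$ near $x_1$; either choice works under the hypothesis $2\|\beta\|_\infty\xi\eta<\max h$.
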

\begin{proof}
(i) For any $(u,v)\in\Omega$, by \eqref{estimate u^p}, it follows that
\begin{align*}
E(u,v)=&\frac{1}{2}(\|\nabla u\|_2^2+\|\nabla v\|_2^2)-\frac{1}{2p}(\mu_1B(u,p)+\mu_2B(v,p))-\int_{\mathbb{R}^N}\beta(x)uvdx\\
\ge&\frac{1}{2}(\|\nabla u\|_2^2+\|\nabla v\|_2^2)
-\frac{C_{\xi,\eta}(N,\alpha,p,\mu_1,\mu_2)}{2p}(\|\nabla u\|_2^2+\|\nabla v\|_2^2)^{p\delta_{p}}-\|\beta(x)\|_{\infty}\xi\eta.
\end{align*}
Taking $C=C_{\xi,\eta}(N,\alpha,p,\mu_1,\mu_2)$ and $x=\|\nabla u\|_2^2+\|\nabla v\|_2^2\le K_1<x_0$ in \eqref{h}, by Lemma \ref{lh}, we get that
$$
E(u,v)\ge-\|\beta(x)\|_{\infty}\xi\eta:=-C_1.
$$
Therefore (i) holds.

(ii) If $(u_1,v_1)\in\Pi$ and $(u_2,v_2)\in\Omega$, then by \eqref{estimate u^p},
\begin{align}\label{-}
\begin{split}
E(u_1,v_1)-E(u_2,v_2)
\ge\frac{K_2}{2}-\frac{C_{\xi,\eta}(N,\alpha,p,\mu_1,\mu_2)K_2^{p\delta_{p}}}{2p}-\frac{K_1}{2}-2\|\beta(x)\|_{\infty}\xi\eta.
\end{split}
\end{align}
From Lemma \ref{lh}, take $K_2=x_1$ and let $K_1$ be sufficiently small, if $\|\beta(x)\|_{\infty}<\frac{h(K_2)}{2\xi\eta}$, then
$$
E(u_1,v_1)-E(u_2,v_2)>0
$$
for any $(u_1,v_1)\in\Pi$ and $(u_2,v_2)\in\Omega$.
Therefore (ii) holds.

(iii) Similar to \eqref{-},  for any $(u_1,v_1)\in\Pi$, there holds
\begin{align*}
E(u_1,v_1)\ge&\frac{K_2}{2}-\frac{C_{\xi,\eta}(N,\alpha,p,\mu_1,\mu_2)K_2^{p\delta_{p}}}{2p}-\|\beta(x)\|_{\infty}\xi\eta>0
\end{align*}
for $\|\beta(x)\|_{\infty}<\frac{h(K_2)}{2\xi\eta}$.  Hence (iii) holds.
\end{proof}
Define
\begin{align*}
\Delta=\{(u,v)\in S^{r}_{\xi}\times S^{r}_{\eta},\|\nabla u\|_2^2+\|\nabla v\|_2^2\ge2K_2~\text{and}~E(u,v)\le 0\}.
\end{align*}
Notice that $\Delta\neq\emptyset$ by Lemma \ref{sinfty}. From Lemma \ref{sinfty} and Lemma \ref{lmps}, if $(\bar{u},\bar{v})\in\Omega$ and $(\hat{u},\hat{v})\in\Delta$, then there is a mountain path linking $(\bar{u},\bar{v})$ and $(\hat{u},\hat{v})$ and passing through $\Pi$. Define
\begin{align*}
\Gamma:=\{\gamma:=(\gamma_1(t),\gamma_2(t))\in C([0,1],S_{\xi}\times S_{\eta}):\gamma(0)=(\bar{u},\bar{v}),\gamma(1)=(\hat{u},\hat{v})\}.
\end{align*}
\begin{lemma}
There exists a Palais-Smale sequence $(\bar{u}_n,\bar{v}_n)$ for $E$ on $S^{r}_{\xi}\times S^{r}_{\eta}$ at the level
\begin{align*}
c:=\inf_{\gamma\in\Gamma}\max_{t\in[0,1]}E(\gamma(t))\ge\inf_{\Pi}E>0\ge \max\{E(\bar{u},\bar{v}),E((\hat{u},\hat{v}))\}
\end{align*}
satisfying additional condition
\begin{align}\label{add condition}
\begin{split}
&(\|\nabla \bar{u}_n\|_2^2+\nabla \bar{v}_n\|_2^2)
-\delta_{p}(\mu_1B(\bar{u}_n,p)+\mu_2B(\bar{v}_n,p))\\
+&\int_{\mathbb{R}^N}(x\cdot\nabla \beta(x))\bar{u}_n(x)\bar{v}_n(x)dx=o_n(1)
\end{split}
\end{align}
with $o_{n}(1)\rightarrow0$ as $n\rightarrow\infty$.
\end{lemma}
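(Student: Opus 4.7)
The plan is to implement the Jeanjean fibration trick \cite{J97} adapted to the coupled Choquard setting. I would introduce the augmented functional
\[
\tilde{E}\colon \mathbb{R}\times S^r_\xi\times S^r_\eta\to\mathbb{R},\qquad \tilde{E}(s,u,v):=E\bigl(s\star(u,v)\bigr).
\]
Using the identities $\|s\star u\|_2=\|u\|_2$, $\|\nabla(s\star u)\|_2^2=e^{2s}\|\nabla u\|_2^2$, $B(s\star u,p)=e^{2p\delta_p s}B(u,p)$, together with the substitution $x\mapsto e^{-s}x$ in the coupling term, one obtains
\[
\tilde{E}(s,u,v)=\frac{e^{2s}}{2}\bigl(\|\nabla u\|_2^2+\|\nabla v\|_2^2\bigr)-\frac{e^{2p\delta_p s}}{2p}\bigl(\mu_1 B(u,p)+\mu_2 B(v,p)\bigr)-\int_{\mathbb{R}^N}\beta(e^{-s}x)\,u(x)v(x)\,dx.
\]
Differentiating in $s$ at $s=0$ reproduces exactly the left-hand side of \eqref{add condition}, so the desired side condition is naturally encoded in $\partial_s\tilde{E}\to 0$ after an appropriate rescaling back.

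Since $s\star$ is a $C^1$-diffeomorphism of $S^r_\xi\times S^r_\eta$ preserving $L^2$ norms, the mountain-pass geometry established in Lemma \ref{lmps} lifts to $\tilde{E}$ on the augmented manifold $\mathbb{R}\times S^r_\xi\times S^r_\eta$. Concretely, I would consider the enlarged path class
\[
\tilde\Gamma:=\bigl\{\tilde\gamma\in C([0,1],\mathbb{R}\times S^r_\xi\times S^r_\eta)\colon \tilde\gamma(0)=(0,\bar u,\bar v),\ \tilde\gamma(1)=(0,\hat u,\hat v)\bigr\};
\]
the map $(s,u,v)\mapsto s\star(u,v)$ furnishes an energy-preserving bijection between $\tilde\Gamma$ and the radial sub-class of $\Gamma$, and consequently the associated min-max level for $\tilde{E}$ over $\tilde\Gamma$ equals the level $c$ for $E$ over $\Gamma$.

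Applying Ekeland's variational principle in the form used by Jeanjean \cite{J97} to the $C^1$-functional $\tilde{E}$ on the Hilbert manifold $\mathbb{R}\times S^r_\xi\times S^r_\eta$ then produces a sequence $(s_n,u_n,v_n)$ with $\tilde{E}(s_n,u_n,v_n)\to c$, $\partial_s\tilde{E}(s_n,u_n,v_n)\to 0$, and $d_{(u,v)}\tilde{E}(s_n,u_n,v_n)\big|_{T_{(u_n,v_n)}(S^r_\xi\times S^r_\eta)}\to 0$. Setting $(\bar u_n,\bar v_n):=s_n\star(u_n,v_n)$, the $L^2$-invariance of $s\star$ keeps the sequence in $S^r_\xi\times S^r_\eta$, while the identity $E(\bar u_n,\bar v_n)=\tilde{E}(s_n,u_n,v_n)$ gives convergence to $c$. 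By the equivariance of $s\star$ and the chain rule, the partial derivative in $(u,v)$ translates to $E'(\bar u_n,\bar v_n)\big|_{T_{(\bar u_n,\bar v_n)}(S^r_\xi\times S^r_\eta)}\to 0$, and $\partial_s\tilde{E}(s_n,u_n,v_n)=\partial_s\tilde{E}(0,\bar u_n,\bar v_n)$ is precisely the expression in \eqref{add condition}.

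The main obstacle I expect is producing the Palais-Smale sequence with both derivative smallness conditions simultaneously: this requires the quantitative deformation lemma adapted to the product manifold $\mathbb{R}\times S^r_\xi\times S^r_\eta$, combined with the Jeanjean argument that penalizes paths failing to realize the min-max level. A secondary technical check is that hypothesis (i) of Theorem \ref{t2}, namely $\beta(x)=\beta(|x|)$, renders $E$ invariant under $O(N)$, which legitimizes working entirely inside the radial subspace throughout the construction.
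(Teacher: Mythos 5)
Your proposal is correct and follows essentially the same route as the paper: the paper likewise introduces the augmented functional $\bar{E}(s,u,v)=E(s\star u,s\star v)$ on $\mathbb{R}\times S^{r}_{\xi}\times S^{r}_{\eta}$, shows the min-max levels coincide, invokes the min-max principle (Theorem 3.2 of \cite{G93}, the same Ekeland-type machinery you cite from \cite{J97}) to get a Palais--Smale sequence with both derivative conditions, and recovers \eqref{add condition} from $\partial_s\bar{E}\to0$ exactly as you compute.
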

\begin{proof}
We consider the auxiliary functional $\bar{E}$:
\begin{align*}
&\bar{E}:\mathbb{R}\times S^{r}_{\xi}\times S^{r}_{\eta}\rightarrow\mathbb{R},\\
&\bar{E}(s,u,v)=E(s\star u,s\star v).
\end{align*}
Set
$$
\bar{\Gamma}=\{\bar{\gamma}(t)=(s(t),\gamma_1(t),\gamma_2(t))\in C([0,1],\mathbb{R}\times S^{r}_{\xi}\times S^{r}_{\eta}):\bar{\gamma}(0)=(0,\bar{u},\bar{v}),\bar{\gamma}(1)=(0,\hat{u},\hat{v})\},
$$
and define
\begin{align*}
\bar{c}:=\inf_{\bar{\gamma}\in\bar{\Gamma}}\sup_{t\in[0,1]}\bar{E}(\bar{\gamma}(t)).
\end{align*}

We next show $c=\bar{c}$. On the one hand, since $\Gamma\subset\bar{\Gamma}$, it is easy to find that $c\ge\bar{c}$. On the other hand, for any $\bar{\gamma}(t)=(s(t),\gamma_1(t),\gamma_2(t))\in\bar{\Gamma}$, it holds that
\begin{align*}
(s(t)\star\gamma_1(t),s(t)\star\gamma_2(t))\in\Gamma~\text{and}~\bar{E}(\bar{\gamma}(t))=E(s(t)\star\gamma_1(t),s(t)\star\gamma_2(t)),
\end{align*}
hence $c\le\bar{c}$. Therefore, $c=\bar{c}$.

Notice that
\begin{align*}
\bar{E}(\bar{\gamma}(t))=E(s(t)\star\gamma_1(t),s(t)\star\gamma_2(t))=\bar{E}(0,s(t)\star\gamma_1(t),s(t)\star\gamma_2(t)).
\end{align*}
Since $\beta(x)\ge0$, $\bar{E}(s,|u|,|v|)\le\bar{E}(s,u,v)$. As a consequence, we can choose the minimizing sequence $\bar{\gamma}_{n}=(s_n,\gamma_{1n},\gamma_{2n})$ for $\bar{c}$ satisfying
\begin{align*}
\gamma_{1n},~\gamma_{2n}\ge0 \text{ a.e. in }\mathbb{R}^N.
\end{align*}
Using Theorem 3.2 in \cite{G93}, there exists a Palais-Smale sequence $(s_n,u_n,v_n)\in\mathbb{R}\times S_{\xi}\times S_{\eta}$ for $\bar{E}$ at level $\bar{c}$ such that
\begin{itemize}
  \item [(1)]$\lim\limits_{n\rightarrow\infty}\bar{E}(s_n,u_n,v_n)=\bar{c}=c$,
  \item[(2)]$\lim\limits_{n\rightarrow\infty}|s_n|+dist((u_n,v_n),(\gamma_{1n},\gamma_{2n}))=0$,
  \item [(3)]$\lim\limits_{n\rightarrow\infty}\|\nabla_{\mathbb{R}\times S_{\xi}\times S_{\eta}}\bar{E}(s_n,u_n,v_n)\|=0$.
\end{itemize}
From (1), we have $E(s_n\star u_n,s_n\star v_n)=c,~s_n\rightarrow 0$.  From $\gamma_{1n},~\gamma_{2n}\ge0$ and (2), it follows that $u_n,v_n\ge0$. Therefore, $\{(\bar{u}_n,\bar{v}_n)\}=:\{(s_n\star u_n,s_n\star v_n)\}$ is a Palais-Smale sequence for $E$. Similarly, for any $(\varphi_n,\psi_n)\in H_1^r\times H_1^r$, setting $(\tilde{\varphi}_n,\tilde{\psi}_n)=((-s_n)\star \varphi_n,(-s_n)\star \psi_n)$, it is easily to find that $\int_{\mathbb{R}^N}\bar{u}_n\varphi_n=0$ and $\int_{\mathbb{R}^N}\bar{v}_n\psi_n=0$ is equivalent to $\int_{\mathbb{R}^N}u_n\tilde{\varphi}_n=0$ and $\int_{\mathbb{R}^N}v_n\tilde{\psi}_n=0$. Therefore,
\begin{align*}
DE(\bar{u}_n,\bar{v}_n)[(\varphi_n,\psi_n)]=D\bar{E}(s_n,u_n,v_n)[(0,\tilde{\varphi}_n,\tilde{\psi}_n)]+o(1)\|(\tilde{\varphi}_n,\tilde{\psi}_n)\|_{H^1(\mathbb{R}^N)\times H^1(\mathbb{R}^N)},
\end{align*}
due to $\|(\tilde{\varphi}_n,\tilde{\psi}_n)\|^2_{H^1(\mathbb{R}^N)\times H^1(\mathbb{R}^N)}\le 2\|(\varphi_n,\psi_n)\|^2_{H^1(\mathbb{R}^N)\times H^1(\mathbb{R}^N)}$ for $n$ large, it can be deduced that $\nabla_{S_{\xi}\times S_{\eta}}E(\bar{u}_n,\bar{v}_n)\rightarrow 0$. By (3), it implies that
\begin{align*}
D\bar{E}(s_n,u_n,v_n)[(1,0,0)]\rightarrow0, ~~s_n\rightarrow 0\text{~as~}n\rightarrow\infty.
\end{align*}
We can compute that
\begin{align*}
\partial_{s}(\int_{\mathbb{R}^N}\beta(x)e^{Ns}u(e^sx)v(e^sx)dx)&=\partial_{s}(\int_{\mathbb{R}^N}\beta(e^{-s}x)u(x)v(x)dx)\\
&=-\int_{\mathbb{R}^N}(e^{-s}x\cdot\nabla\beta(e^{-s}x))u(x)v(x)dx,
\end{align*}
and
\begin{align*}
\left\{\begin{aligned}
&\partial_{s}\bigg\{\frac{1}{2}(\|\nabla (s\star u)\|_2^2+\|\nabla (s\star v)\|_2^2)\bigg\}
=e^{2s}(\|\nabla u\|_2^2+\|\nabla  v\|_2^2),\\
&\partial_{s}\frac{\mu_1B(s\star u,p)+\mu_2B(s\star v,p)}{2p}=\delta_{p}e^{2p\delta_{p}s}(\mu_1B(u,p)+\mu_2B(v,p)),
\end{aligned}
\right.
\end{align*}
by (3), it follows that
\begin{align*}
&(\|\nabla \bar{u}_n\|_2^2+\nabla \bar{v}_n\|_2^2)
-\delta_{p}(\mu_1B(\bar{u}_n,p)+\mu_2B(\bar{v}_n,p))\\
+&\int_{\mathbb{R}^N}(x\cdot\nabla \beta(x))\bar{u}_n(x)\bar{v}_n(x)dx\rightarrow 0~~\text{as~}n\rightarrow\infty.
\end{align*}
Thus we finish the proof.
\end{proof}

Next lemma gives the boundedness of Palais-Smale sequence $\{(\bar{u}_n,\bar{v}_n)\}$.

\begin{lemma}\label{lbdd}
Suppose that $\beta(x)$ and $x\cdot\nabla \beta(x)$ are bounded. Then $\{(\bar{u}_n,\bar{v}_n)\}$ is bounded in $H^1_{r}(\mathbb{R}^N)\times H^1_{r}(\mathbb{R}^N)$. Furthermore, there exists a constant $\bar{C}>0$ such that
\begin{align*}
\|\nabla \bar{u}_n\|_2^2+\|\nabla \bar{v}_n\|_2^2\ge \bar{C}.
\end{align*}
\end{lemma}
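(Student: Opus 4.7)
The plan is to algebraically eliminate the Choquard terms between the energy identity $E(\bar{u}_n,\bar{v}_n)=c+o_n(1)$ and the additional condition \eqref{add condition}, exploiting the $L^2$-supercritical exponent to extract a bound on the kinetic part.

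Write $T_n:=\|\nabla\bar{u}_n\|_2^2+\|\nabla\bar{v}_n\|_2^2$, $I_0^{(n)}:=\int_{\mathbb{R}^N}\beta(x)\bar{u}_n\bar{v}_n\,dx$, and $I_1^{(n)}:=\int_{\mathbb{R}^N}(x\cdot\nabla\beta(x))\bar{u}_n\bar{v}_n\,dx$. Solving \eqref{add condition} yields $\mu_1 B(\bar{u}_n,p)+\mu_2 B(\bar{v}_n,p)=\delta_p^{-1}(T_n+I_1^{(n)})+o_n(1)$; inserting this into the expression for $E$ and rearranging gives
\[
\Bigl(\tfrac12-\tfrac{1}{2p\delta_p}\Bigr)T_n \;=\; c+I_0^{(n)}+\tfrac{1}{2p\delta_p}\,I_1^{(n)}+o_n(1).
\]
A direct calculation shows $p\delta_p=\tfrac{N(p-1)-\alpha}{2}>1$ is equivalent to the hypothesis $p>1+\frac{\alpha+2}{N}$, so the coefficient of $T_n$ is strictly positive. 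Since $\beta$ and $x\cdot\nabla\beta$ are bounded, Cauchy--Schwarz with the mass constraints yields $|I_0^{(n)}|\le\|\beta\|_\infty\,\xi\eta$ and $|I_1^{(n)}|\le\|x\cdot\nabla\beta\|_\infty\,\xi\eta$. Hence $T_n$ is bounded, and together with $\|\bar{u}_n\|_2=\xi$, $\|\bar{v}_n\|_2=\eta$, this gives the $H^1_r(\mathbb{R}^N)\times H^1_r(\mathbb{R}^N)$ boundedness.

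For the lower bound, suppose by contradiction that $T_{n_k}\to 0$ along a subsequence. Estimate \eqref{estimate u^p} gives $\mu_1 B(\bar{u}_{n_k},p)+\mu_2 B(\bar{v}_{n_k},p)\le C\,T_{n_k}^{p\delta_p}\to 0$ since $p\delta_p>1>0$. Recalling $\bar{u}_n,\bar{v}_n\ge 0$ from the preceding lemma and $\beta(x)>0$ by hypothesis (i), so that $I_0^{(n_k)}\ge 0$, we obtain
\[
c+o(1)\;=\;E(\bar{u}_{n_k},\bar{v}_{n_k})\;=\;\tfrac12 T_{n_k}-\tfrac{1}{2p}\bigl(\mu_1 B(\bar{u}_{n_k},p)+\mu_2 B(\bar{v}_{n_k},p)\bigr)-I_0^{(n_k)}\;\le\; o(1),
\]
contradicting $c\ge\inf_{\Pi}E>0$ from Lemma \ref{lmps}. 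Therefore $T_n\ge\bar{C}$ for some $\bar{C}>0$.

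The essential obstacle is really the combined algebraic step: one must organize the energy and scaling identities so that the surviving coefficient $\tfrac12-\tfrac{1}{2p\delta_p}$ is positive (which is precisely the $L^2$-supercritical threshold), while controlling the new coupling terms $I_0^{(n)}$ and $I_1^{(n)}$ produced by the spatially dependent $\beta(x)$ via pure $L^\infty$ bounds rather than any sign condition. The strict positivity of the mountain-pass level $c$ is what ultimately prevents the sequence from degenerating to zero kinetic energy.
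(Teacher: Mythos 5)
Your proof is correct. For the $H^1$-boundedness you form exactly the same linear combination as the paper: eliminating the Choquard terms between $E(\bar{u}_n,\bar{v}_n)\to c$ and \eqref{add condition} yields $(p\delta_p-1)(\|\nabla\bar{u}_n\|_2^2+\|\nabla\bar{v}_n\|_2^2)=2p\delta_p c+2p\delta_p I_0^{(n)}+I_1^{(n)}+o_n(1)$, which is your identity multiplied by $2p\delta_p$, and the conclusion follows from $p\delta_p>1$ and the $L^\infty$ bounds on $\beta$ and $x\cdot\nabla\beta$. Where you genuinely diverge is the lower bound $\|\nabla\bar{u}_n\|_2^2+\|\nabla\bar{v}_n\|_2^2\ge\bar{C}$: the paper reads it off the same identity, asserting it follows from boundedness of $\beta$ and $x\cdot\nabla\beta$ alone --- which, taken literally, is insufficient, since the integral term could in principle be negative enough to cancel $2p\delta_p c$; one really needs the sign hypothesis (iii) (which gives $2p\delta_p\beta+x\cdot\nabla\beta\ge 2\delta_p\beta+x\cdot\nabla\beta\ge0$ and hence a nonnegative integral against $\bar{u}_n\bar{v}_n\ge0$). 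Your contradiction argument via \eqref{estimate u^p} and the strict positivity of the mountain-pass level avoids condition (iii) entirely, using only $\beta\ge0$, the nonnegativity of $\bar{u}_n,\bar{v}_n$ from the construction of the Palais--Smale sequence, and $c\ge\inf_\Pi E>0$; it is arguably the more rigorous route and patches a small gap in the paper's own justification.
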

\begin{proof}
By \eqref{add condition}, it yields that
\begin{align*}
2p\delta_{p}c\leftarrow&2p\delta_{p}E(\bar{u}_n,\bar{v}_n)-D\bar{E}(s_n,u_n,v_n)[(1,0,0)]\\
=&(p\delta_{p}-1)(\|\nabla \bar{u}_n\|_2^2+\|\nabla \bar{v}_n\|_2^2)
-\int_{\mathbb{R}^N}(2p\delta_{p}\beta(x)+x\cdot\nabla\beta(x))\bar{u}_n(x)\bar{v}_n(x)dx
\end{align*}
as $n\rightarrow\infty$.
Since $1+\frac{\alpha+2}{N}<p<\frac{\alpha+N}{N-2}$, we have  $p\delta_{p}-1>0$. By the boundedness of $\beta(x)$ and $x\cdot\nabla\beta(x)$, there exist $C_1>0$ and $C_2>0$ such that
\begin{align*}
C_2\le\|\nabla \bar{u}_n\|_2^2+\|\nabla \bar{v}_n\|_2^2\le C_1.
\end{align*}
Hence the desired results is obtained.
\end{proof}

Since $E'|_{S_{\xi}\times S_{\eta}}(\bar{u}_n,\bar{v}_n)\rightarrow 0$, there exist two sequence $\{\lambda_{1n}\}\subset\mathbb{R}$ and $\{\lambda_{2n}\}\subset\mathbb{R}$ such that
\begin{align}\label{dvt e}
\begin{split}
&\int_{\mathbb{R}^N}[\nabla \bar{u}_n\nabla \varphi+\nabla \bar{v}_n\nabla \psi]dx
-\mu_1\int_{\mathbb{R}^N}(I_{\alpha}\star(|\bar{u}_n|^p))|\bar{u}_n|^{p-2}\bar{u}_n\varphi dx\\
-&\mu_2\int_{\mathbb{R}^N}(I_{\alpha}\star(|\bar{v}_n|^p))|\bar{v}_n|^{p-2}\bar{v}_n\psi dx
-\int_{\mathbb{R}^N}\beta(x)(\bar{u}_n\psi+\bar{v}_n\varphi)dx\\
+&\lambda_{1n}\int_{\mathbb{R}^N}\bar{u}_n\varphi dx+\lambda_{2n}\int_{\mathbb{R}^N}\bar{v}_n\psi dx
=o_{n}(1)\|(\varphi,\psi)\|_{H^1(\mathbb{R}^N)\times H^1(\mathbb{R}^N)}
\end{split}
\end{align}
for any $(\varphi,\psi)\in H^1(\mathbb{R}^N)\times H^1(\mathbb{R}^N)$ with $o_{n}(1)\rightarrow 0$ as $n\rightarrow\infty$.

\begin{lemma}\label{llambda}
Both $\{\lambda_{1n}\}$ and $\{\lambda_{2n}\}$ are bounded sequences. Moreover,  if $2\beta(x)+\frac{x\cdot\nabla \beta(x)}{\delta_{p}}\ge0$, then at least one of the sequences converges to a strict positive value.
\end{lemma}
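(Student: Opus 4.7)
The plan is to read off $\lambda_{1n}$ and $\lambda_{2n}$ by testing \eqref{dvt e} against $(\bar{u}_n,0)$ and $(0,\bar{v}_n)$ separately, and then to combine the two resulting identities with the Pohozaev--type relation \eqref{add condition} to pin down the sign of the sum $\lambda_{1n}\xi^2 + \lambda_{2n}\eta^2$.

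First, choosing $\varphi=\bar{u}_n$, $\psi=0$ in \eqref{dvt e} yields
\begin{align*}
\lambda_{1n}\xi^2 = -\|\nabla \bar{u}_n\|_2^2 + \mu_1 B(\bar{u}_n,p) + \int_{\mathbb{R}^N}\beta(x)\bar{u}_n\bar{v}_n\,dx + o_n(1),
\end{align*}
and similarly for $\lambda_{2n}\eta^2$ with $\psi=\bar{v}_n$, $\varphi=0$. By Lemma \ref{lbdd}, the sequences $\{\bar{u}_n\},\{\bar{v}_n\}$ are bounded in $H^1_r(\mathbb{R}^N)$, so by \eqref{bu} the quantities $B(\bar{u}_n,p), B(\bar{v}_n,p)$ are bounded; combined with the boundedness of $\beta(x)$ and the Cauchy--Schwarz estimate $\int \beta(x)\bar{u}_n\bar{v}_n \le \|\beta\|_\infty \xi\eta$, this gives at once that both $\{\lambda_{1n}\}$ and $\{\lambda_{2n}\}$ are bounded in $\mathbb{R}$.

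For the sign statement, I would add the two identities above to get
\begin{align*}
\lambda_{1n}\xi^2 + \lambda_{2n}\eta^2 = -(\|\nabla\bar{u}_n\|_2^2 + \|\nabla \bar{v}_n\|_2^2) + \mu_1 B(\bar{u}_n,p) + \mu_2 B(\bar{v}_n,p) + 2\int_{\mathbb{R}^N}\beta(x)\bar{u}_n\bar{v}_n\,dx + o_n(1).
\end{align*}
The key trick is then to use \eqref{add condition} to replace $\mu_1 B(\bar{u}_n,p)+\mu_2 B(\bar{v}_n,p)$ by
\[
\frac{1}{\delta_p}\Bigl(\|\nabla \bar{u}_n\|_2^2 + \|\nabla \bar{v}_n\|_2^2\Bigr) + \frac{1}{\delta_p}\int_{\mathbb{R}^N}(x\cdot\nabla \beta(x))\bar{u}_n\bar{v}_n\,dx + o_n(1).
\]
Substituting and collecting terms gives
\begin{align*}
\lambda_{1n}\xi^2 + \lambda_{2n}\eta^2 = \Bigl(\tfrac{1}{\delta_p}-1\Bigr)\bigl(\|\nabla \bar{u}_n\|_2^2 + \|\nabla\bar{v}_n\|_2^2\bigr) + \int_{\mathbb{R}^N}\Bigl(2\beta(x)+\tfrac{x\cdot\nabla\beta(x)}{\delta_p}\Bigr)\bar{u}_n\bar{v}_n\,dx + o_n(1).
\end{align*}

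The range $1+\frac{\alpha+2}{N}<p<\frac{N+\alpha}{N-2}$ forces $\delta_p<1$ (indeed $\delta_p<1$ is equivalent to $p<\frac{N+\alpha}{N-2}$), so $\tfrac{1}{\delta_p}-1>0$. Using the lower bound $\|\nabla\bar u_n\|_2^2 + \|\nabla \bar v_n\|_2^2 \ge \bar C>0$ from Lemma \ref{lbdd}, together with $\bar u_n,\bar v_n\ge 0$ and the assumption $2\beta(x)+\frac{x\cdot\nabla\beta(x)}{\delta_p}\ge 0$, every term on the right-hand side above is nonnegative and bounded below away from zero modulo $o_n(1)$. Hence
\[
\liminf_{n\to\infty}(\lambda_{1n}\xi^2 + \lambda_{2n}\eta^2) \ge \Bigl(\tfrac{1}{\delta_p}-1\Bigr)\bar C > 0,
\]
and by passing to a subsequence (where both $\lambda_{1n},\lambda_{2n}$ converge thanks to the first part) at least one of the two limits must be strictly positive. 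The main technical nuisance is simply bookkeeping the $o_n(1)$ errors coming from the approximate Lagrange multiplier identity \eqref{dvt e} and the approximate Pohozaev identity \eqref{add condition}; no additional compactness is needed for this lemma.
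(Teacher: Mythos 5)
Your proposal is correct and follows essentially the same route as the paper: test \eqref{dvt e} against $(\bar{u}_n,0)$ and $(0,\bar{v}_n)$ to get boundedness, then combine the sum of the two multiplier identities with \eqref{add condition} to obtain $\lambda_{1n}\xi^2+\lambda_{2n}\eta^2=(\tfrac{1}{\delta_p}-1)(\|\nabla\bar{u}_n\|_2^2+\|\nabla\bar{v}_n\|_2^2)+\int(2\beta+\tfrac{x\cdot\nabla\beta}{\delta_p})\bar{u}_n\bar{v}_n\,dx+o_n(1)\ge(\tfrac{1}{\delta_p}-1)\bar{C}>0$. Your explicit remark that $\bar{u}_n,\bar{v}_n\ge0$ is needed for the sign of the integral term is a point the paper uses only implicitly (it arranged the Palais--Smale sequence to be nonnegative), so that is a welcome clarification rather than a deviation.
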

\begin{proof}
Taking $(\varphi,\psi)=(\bar{u}_n,0)$ ($(\varphi,\psi)=(0,\bar{v}_n)$, respectively) in \eqref{dvt e}, we can obtain that
\begin{align}\label{l1}
-\lambda_{1n}\xi^2+o_{n}(1)=\|\nabla \bar{u}_n\|_2^2-\mu_1B(\bar{u}_n,p)-\int_{\mathbb{R}^N}\beta(x)\bar{u}_n\bar{v}_n dx,
\end{align}
and
\begin{align}\label{l2}
-\lambda_{2n}\eta^2+o_{n}(1)=\|\nabla \bar{v}_n\|_2^2-\mu_2B(\bar{v}_n,p)-\int_{\mathbb{R}^N}\beta(x)\bar{u}_n\bar{v}_n dx.
\end{align}
From the boundedness of $\beta(x)$ and $\{(\bar{u}_n,\bar{v}_n)\}$ in $H^1_{r}(\mathbb{R}^N)\times H^1_{r}(\mathbb{R}^N)$, it is easy to deduce that $\{\lambda_{1n}\}$ and $\{\lambda_{2n}\}$ are bounded.

By \eqref{add condition}, \eqref{l1} and \eqref{l2}, it yields that
\begin{align*}
&\lambda_{1n}\xi^2+\lambda_{2n}\eta^2+o_{n}(1)\\
=&-(\|\nabla \bar{u}_n\|_2^2+\|\nabla \bar{v}_n\|_2^2)
+(\mu_1B(\bar{u}_n,p)+\mu_2B(\bar{v}_n,p))+\int_{\mathbb{R}^N}2\beta(x)\bar{u}_n\bar{v}_n dx\\
=&\bigg(\frac{1}{\delta_{p}}-1\bigg)(\|\nabla \bar{u}_n\|_2^2+\|\nabla \bar{v}_n\|_2^2)+\int_{\mathbb{R}^N}\bigg(2\beta(x)+\frac{x\cdot\nabla \beta(x)}{\delta_{p}}\bigg)\bar{u}_n\bar{v}_n dx.
\end{align*}
Since $\delta_{p}<1$ and $2\beta(x)+\frac{x\cdot\nabla \beta(x)}{\delta_{p}}\ge0$, by Lemma \ref{lbdd}, it can be inferred that
\begin{align*}
\lambda_{1n}\xi^2+\lambda_{2n}\eta^2+o_{n}(1)\ge\bigg(\frac{1}{\delta_{p}}-1\bigg)C_2>0
\end{align*}
for $n$ large. Hence the lemma holds.
\end{proof}
From the above lemma, there exists a subsequence of $\{\lambda_{1n}\}$ (resp. $\{\lambda_{2n}\}$) and $\lambda_1\in\mathbb{R}$ (resp. $\lambda_2\in\mathbb{R}$)  such that $\lambda_{1n}\rightarrow\lambda_1$ (resp. $\lambda_{2n}\rightarrow\lambda_2$) as $n\rightarrow\infty$. The signs of $\lambda_1$ and $\lambda_2$  play an important role in the strong convergence of $\bar{u}_n,~\bar{v}_n$ in $H^1(\mathbb{R}^N)$.
\begin{lemma}\label{llp}
If $\lambda_1>0~(\text{resp.~} \lambda_2>0)$, then $\bar{u}_n\rightarrow\bar{u}~(\text{resp.~}\bar{v}_n\rightarrow\bar{v}) $ strongly in $H^1(\mathbb{R}^N)$.
\end{lemma}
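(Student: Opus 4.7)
The plan is to use the coercivity of $-\Delta+\lambda_1$ together with standard weak-limit and Br\'ezis--Lieb arguments. First, by Lemma \ref{lbdd} the sequence $(\bar u_n,\bar v_n)$ is bounded in $H^1_r(\mathbb R^N)\times H^1_r(\mathbb R^N)$; by the compact radial embedding, one may pass to a subsequence with $(\bar u_n,\bar v_n)\rightharpoonup(\bar u,\bar v)$ weakly in $H^1_r\times H^1_r$, strongly in $L^r\times L^r$ for each $r\in(2,2^*)$, and almost everywhere. Passing to the limit in \eqref{dvt e} with the aid of Lemma \ref{lbw}, the pair $(\bar u,\bar v)$ solves the corresponding limit system with the Lagrange multipliers $\lambda_1,\lambda_2$ obtained from Lemma \ref{llambda}.

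Setting $w_n:=\bar u_n-\bar u$, the next step is to derive the key identity
\begin{equation*}
\|\nabla w_n\|_2^2+\lambda_1\|w_n\|_2^2=\int_{\mathbb R^N}\beta(x)\,w_n\,(\bar v_n-\bar v)\,dx+o(1).
\end{equation*}
This comes from testing the first equation of \eqref{dvt e} against $w_n$ and the corresponding limit equation against $w_n$, then subtracting. One uses: (a) the Br\'ezis--Lieb-type splittings $\lambda_{1n}\int\bar u_n w_n=\lambda_1\|w_n\|_2^2+o(1)$ and $\|\nabla \bar u_n\|_2^2-\int\nabla\bar u_n\nabla\bar u=\|\nabla w_n\|_2^2+o(1)$, both using $\bar u_n\rightharpoonup\bar u$ and $\lambda_{1n}\to\lambda_1$; (b) the strong $L^{2Np/(N+\alpha)}$ convergence of $\bar u_n$ from the compact radial embedding together with Lemma \ref{lbw} and Lemma \ref{lbl}, which gives $\int[(I_\alpha\star|\bar u_n|^p)|\bar u_n|^{p-2}\bar u_n-(I_\alpha\star|\bar u|^p)|\bar u|^{p-2}\bar u]w_n\to 0$; and (c) the cancellation of the cross terms $\int\beta\bar u(\bar v_n-\bar v)\,dx$ and $\int\beta w_n\bar v\,dx$, which vanish by weak $L^2$ convergence since $\beta\bar u,\beta\bar v\in L^2(\mathbb R^N)$.

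The central step, which is also the main obstacle, is to show $\int_{\mathbb R^N}\beta\,w_n(\bar v_n-\bar v)\,dx\to 0$. The plan is to split this integral as $\int_{B_R}+\int_{B_R^c}$. On the ball $B_R$, Rellich--Kondrachov gives $\bar v_n\to\bar v$ strongly in $L^2(B_R)$, so Cauchy--Schwarz together with the uniform $L^2$ bound on $w_n$ and the boundedness of $\beta$ kill this part for each fixed $R$. The delicate contribution is the tail on $B_R^c$: here one combines the Strauss-type pointwise decay of radial $H^1$ functions with Cauchy--Schwarz and the $\|\beta\|_\infty$ bound from condition (ii) of Theorem \ref{t2}, aiming to show that the tail can be made uniformly small by choosing $R$ large. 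This is the step I expect to require the most careful analysis.

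Once the right-hand side is shown to be $o(1)$, the assumption $\lambda_1>0$ forces $\|\nabla w_n\|_2^2+\lambda_1\|w_n\|_2^2\to 0$, so $w_n\to 0$ in $H^1(\mathbb R^N)$ and hence $\bar u_n\to\bar u$ strongly in $H^1(\mathbb R^N)$. The corresponding claim for $\bar v_n$ when $\lambda_2>0$ follows by the completely symmetric argument applied to the second equation of the system.
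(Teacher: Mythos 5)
Your setup and the key identity
\begin{align*}
\|\nabla w_n\|_2^2+\lambda_1\|w_n\|_2^2=\int_{\mathbb{R}^N}\beta(x)\,w_n(\bar v_n-\bar v)\,dx+o(1),\qquad w_n=\bar u_n-\bar u,
\end{align*}
are correct and amount to a rearranged form of what the paper does: it tests \eqref{dvt e} against $(\bar u_n,0)$ and passes to the limit to reach \eqref{abc}, with the nonlocal term handled exactly as in \eqref{chlp}; your steps (a)--(c) are all sound. The genuine gap is the treatment of the remaining cross term, which you yourself flag as the crux. The $B_R$ part via Rellich--Kondrachov is fine, but the tail estimate on $B_R^c$ cannot be closed as you propose: the Strauss bound gives $\sup_{|x|\ge R}|w_n(x)|\le C R^{-(N-1)/2}$ uniformly in $n$, yet $\int_{B_R^c}|w_n||\bar v_n-\bar v|\,dx$ is controlled only by $\|w_n\|_{L^2(B_R^c)}\|\bar v_n-\bar v\|_{L^2(B_R^c)}$, and pointwise decay does not make $\|w_n\|_{L^2(B_R^c)}$ small uniformly in $n$ (interpolating the sup bound with the $L^2$ bound improves $L^r$ norms only for $r>2$ --- which is exactly why the radial embedding is compact in $L^r$, $r\in(2,2^*)$, but \emph{not} in $L^2$). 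In fact, for two radial sequences converging weakly to $0$ in $H^1$ the pairing $\int\beta w_n z_n$ need not vanish at all (consider spreading profiles of the form $n^{-N/2}\phi(x/n)$), so no soft compactness or decay argument can finish this step.

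This is precisely the point at which the paper invokes the hypothesis that $\|\beta\|_\infty$ is small (condition (ii) of Theorem \ref{t2}) together with $\lambda_1>0$: the cross term is estimated by $\|\beta\|_\infty\|w_n\|_2\|\bar v_n-\bar v\|_2$ and absorbed against the coercive term $\lambda_1\|w_n\|_2^2$ (in combination with weak lower semicontinuity of the two norms), rather than shown to converge to zero. Your proposal never uses the smallness of $\beta$ here, and without it the argument does not close; so while you have correctly isolated the decisive step and everything around it is right, the mechanism you propose for that step would fail.
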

\begin{proof}
By Lemma \ref{lbdd},  $\{(\bar{u}_n,\bar{v}_n)\}$ is bounded in $H^1_{r}(\mathbb{R}^N)\times H^1_{r}(\mathbb{R}^N)$. By \eqref{bu}, there exists a $(\bar{u},\bar{v})\in H^1_{r}(\mathbb{R}^N)\times H^1_{r}(\mathbb{R}^N)$ such that
\begin{align}\label{chlp}
\left\{\begin{aligned}
&(\bar{u}_n,\bar{v}_n)\rightharpoonup(\bar{u},\bar{v})\text{~weakly in }H^1_{r}(\mathbb{R}^N)\times H^1_{r}(\mathbb{R}^N),\\
&B(\bar{u}_n,p)\rightarrow B(\bar{u},p),~B(\bar{v}_n,p)\rightarrow B(\bar{v},p),\\
&(\bar{u}_n,\bar{v}_n)\rightarrow(\bar{u},\bar{v})\text{~a.e. in }\mathbb{R}^N.
\end{aligned}
\right.
\end{align}
From \eqref{dvt e}, we have
\begin{align*}
\langle E'(\bar{u}_n,\bar{v}_n)-\lambda_{1n}(\bar{u}_n,0),(\bar{u}_n,0)\rangle\rightarrow\langle E'(\bar{u},\bar{v})-\lambda_{1}(\bar{u},0),(\bar{u},0)\rangle~~~\text{as }n\rightarrow\infty.
\end{align*}
Combining with \eqref{chlp}, it holds that
\begin{align*}
&\|\nabla \bar{u}_n\|_2^2+\lambda_{1n}\|\bar{u}_n\|_2^2-\int_{\mathbb{R}^N}\beta(x)\bar{u}_n\bar{v}_n
\rightarrow\|\nabla \bar{u}\|_2^2+\lambda_{1}\|\bar{u}\|_2^2-\int_{\mathbb{R}^N}\beta(x)\bar{u}\bar{v}.
\end{align*}
From $\beta(x)$ is sufficiently  small and $\lambda_1>0$, we can obtain that
\begin{align}\label{abc}
\|\nabla \bar{u}_n\|_2^2+\lambda_{1n}\|\bar{u}_n\|_2^2
\rightarrow\|\nabla \bar{u}\|_2^2+\lambda_{1}\|\bar{u}\|_2^2.
\end{align}
From the low semi-continuous of $L^2$-norm and Fatou's Lemma, it follows that
\begin{align*}
\lim\inf_{n\rightarrow\infty}\|\nabla \bar{u}_n\|_2^2\ge\|\nabla \bar{u}\|_2^2,~
\lim\inf_{n\rightarrow\infty}\|\bar{u}_n\|_2^2\ge\|\bar{u}\|_2^2,
\end{align*}
combining with \eqref{abc}, and due to $\lambda_1>0$, hence
\begin{align*}
\lim_{n\rightarrow\infty}\|\nabla \bar{u}_n\|_2^2=\|\nabla \bar{u}\|_2^2~\text{and}~\lim_{n\rightarrow\infty}\|\bar{u}_n\|_2^2=\|\bar{u}\|_2^2,
\end{align*}
For $\lambda_2>0$, the proof is similar. Thus we complete the proof.
\end{proof}

\begin{proof}[\textbf{Proof of Theorem \ref{t2}}]
From Lemma \ref{llambda}, we may assume that $\lambda_1>0$. By Lemma \ref{llp}, $\bar{u}_n\rightarrow\bar{u}$ strongly in $H^r_1(\mathbb{R}^N)$. If $\lambda_2>0$, then  $\bar{v}_n\rightarrow\bar{v}$ strongly in $H^r_1(\mathbb{R}^N)$. Hence we remain to show $\lambda_2>0$.  We argue it by contradiction and assume that $\lambda_2\le0$. By the weak limit and \eqref{dvt e},  $\bar{v}$ satisfies
\begin{align*}
-\Delta\bar{v}=-\lambda_2 \bar{v}+\mu_2(I_{\alpha}\star v)|\bar{v}|^{q-2}\bar{v}+\beta(x)\bar{u}\bar{v}\ge0,
\end{align*}
due to $\bar{u},~\bar{v}\ge0$ and $\beta(x)\ge0$.
By Liouville-type Lemma in \cite{I14}, we get that $\bar{v}\equiv0$. However, $(\bar{u},0)$ can not be the solution of \eqref{system}, hence there exists a contradiction. Therefore, $\lambda_2>0$. By Lemma \ref{llp}, $\bar{v}_n\rightarrow\bar{v}$ strongly in $H^1_{r}(\mathbb{R}^N)$. Moreover, by maximum principle, $u,~v>0$. Thus the theorem is established.
\end{proof}


\section{$V(x)$ for $\max\{1+\frac{\alpha}{N},2\}<p,q<1+\frac{\alpha+2}{N}$}
In this section, we consider the $L^2$-subcritical case  for \eqref{system} with different potentials.
\subsection{Proof of Theorem \ref{tv2}}
In this subsection, we consider the case $V_1(x)$ and $V_2(x)$  satisfy \textbf{(V2)}. 
\begin{lemma}\cite{R93}\label{lcompact}
Let $V_1(x)$ and $V_2(x)$  satisfy \textbf{(V2)}. Then $\tilde{H}_{1}\times\tilde{H}_{2}\hookrightarrow L^{p}\times L^{q}$ for $p,q\in[2,2^{*})$ is compact.
\end{lemma}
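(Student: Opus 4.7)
\medskip

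\noindent\textbf{Proof proposal.} The plan is to reduce the product embedding to the component embedding. Since $L^p(\mathbb{R}^N) \times L^q(\mathbb{R}^N)$ carries the product topology, it suffices to show that for each $i \in \{1,2\}$ the single embedding $\tilde H_i \hookrightarrow L^r(\mathbb{R}^N)$ is compact for every $r \in [2, 2^*)$. Recall that $\tilde H_i$ is equipped with the norm $\|u\|_{\tilde H_i}^2 := \|u\|_{H^1}^2 + \int_{\mathbb{R}^N} V_i(x)|u|^2\,dx$. Since $V_i$ is continuous and $V_i(x)\to\infty$ as $|x|\to\infty$, $V_i$ is bounded below; replacing $V_i$ by $\widehat V_i := V_i + M$ for a suitably large constant $M>0$ yields an equivalent norm on $\tilde H_i$ with $\widehat V_i \geq 1$, so I may assume without loss of generality that $V_i \geq 1$ on $\mathbb{R}^N$.

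The proof then proceeds along the standard Bartsch--Wang tightness scheme. Given a bounded sequence $\{u_n\} \subset \tilde H_i$, I extract a subsequence with $u_n \rightharpoonup u$ weakly in $H^1(\mathbb{R}^N)$; then by Rellich--Kondrachov, $u_n \to u$ strongly in $L^r(B_R)$ for every $R>0$ and every $r \in [2, 2^*)$. The decisive step is tightness at infinity: for any $\varepsilon>0$, condition \textbf{(V2)} furnishes $R_\varepsilon>0$ such that $V_i(x) \geq 1/\varepsilon$ on $|x|\geq R_\varepsilon$, whence
\begin{equation*}
\int_{|x|\geq R_\varepsilon} |u_n|^2\,dx \;\leq\; \varepsilon \int_{|x|\geq R_\varepsilon} V_i(x) |u_n|^2\,dx \;\leq\; \varepsilon\, \|u_n\|_{\tilde H_i}^2 \;\leq\; C\varepsilon,
\end{equation*}
and similarly for $u$ via Fatou's lemma. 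Combining this tail bound with local $L^2$-convergence gives $u_n \to u$ strongly in $L^2(\mathbb{R}^N)$. For general $r \in (2, 2^*)$, I interpolate $\|u_n - u\|_r \leq \|u_n - u\|_2^{\theta} \|u_n - u\|_{2^*}^{1-\theta}$ with $\theta \in (0,1)$ determined by $\tfrac{1}{r} = \tfrac{\theta}{2} + \tfrac{1-\theta}{2^*}$; the $L^{2^*}$-factor is bounded by Sobolev and $H^1$-boundedness, so $u_n \to u$ in $L^r(\mathbb{R}^N)$.

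The main technical obstacle is precisely the tail estimate: without \textbf{(V2)}, a nonzero fraction of mass can escape to infinity and the embedding fails (as happens for $V \equiv 0$). The argument above is tight in that it uses coercivity of $V_i$ in exactly the right way to control the $L^2$-mass on $|x|\geq R_\varepsilon$ uniformly in $n$; once this is in hand, the passage to higher $L^r$-norms is soft via interpolation and Sobolev. Finally, applying the single-factor result to the two components of any bounded sequence $(u_n, v_n) \in \tilde H_1 \times \tilde H_2$ and extracting a common subsequence yields the claimed compactness of $\tilde H_1 \times \tilde H_2 \hookrightarrow L^p \times L^q$.
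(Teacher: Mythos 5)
Your argument is correct. Note that the paper itself gives no proof of this lemma at all: it is stated with a citation to \cite{R93} and used as a known black box, so there is nothing in the text to compare against. What you supply is the standard proof of this classical compact-embedding result (Rellich--Kondrachov on balls plus the tail estimate $\int_{|x|\ge R_\varepsilon}|u_n|^2\,dx\le \varepsilon\int V_i|u_n|^2\,dx$ forced by the coercivity of $V_i$, then interpolation with the Sobolev bound for $r\in(2,2^*)$), and your preliminary normalization $V_i\mapsto V_i+M$ correctly handles the fact that \textbf{(V2)} only guarantees $V_i$ is bounded below rather than nonnegative. The reduction of the product embedding to the two factors by successive subsequence extraction is also fine.
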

Similar to Lemma \ref{lbfb}, we obtain that
\begin{align*}
E_{V}(u,v)\ge& \frac12(\|\nabla u\|_2^2+\|\nabla v\|_2^2)-\frac{\mu_1C(N,\alpha,p)\xi^{2p(1-\delta_{p})}}{2p}\|\nabla u\|_2^{2p\delta_p}\\
-&\frac{\mu_2C(N,\alpha,q)\eta^{2q(1-\delta_{q})}}{2q}\|\nabla v\|_2^{2q\delta_q}
-\beta \xi\eta+\frac12\int_{\mathbb{R}^N}(V_1(x)|u|^2+V_2(x)|v|^2)dx.
\end{align*}
Since $1+\frac{\alpha}{N}<p,q<1+\frac{\alpha+2}{N}$ and $(u,v)\in\tilde{H_{1}}\times\tilde{H_{2}}$, $E_{V}(u,v)$ is also bounded from below. Furthermore, there exists a bounded minimizing sequence $\{(u_n,v_n)\}\in\tilde{H_{1}}\times\tilde{H_{2}}$.
\begin{proof}[\textbf{Proof of theorem \ref{tv2}}]
Notice that $\{(u_n,v_n)\}\in\tilde{H_{1}}\times\tilde{H_{2}}$ is bounded. Hence there exists a $(u_0,v_0)\in\tilde{H_{1}}\times\tilde{H_{2}}$ such that $(u_n,v_n)\rightharpoonup(u_0,v_0)$ weakly in $\tilde{H_{1}}\times\tilde{H_{2}}$. By Lemma \ref{lcompact} and \eqref{bu}, we can obtain that $B(u_n,p)\to B(u_0,p)$ and $B(v_n,q)\to B(v_0,q)$. Moreover, $(u_n,v_n)\to(u_0,v_0)$ strongly in $\tilde{H_{1}}\times\tilde{H_{2}}$ due to
\begin{align*}
e_{V}(\xi,\eta)\le E_{V}(u_0,v_0)\le\liminf_{n\rightarrow\infty}E_{V}(u_n,v_n)=e_{V}(\xi,\eta).
\end{align*}
Thus we establish the theorem.

\end{proof}

\subsection{Proof of Theorem \ref{tv1}}
In this subsection, we focus on the case that $V_1(x)$ and $V_2(x)$  satisfy \textbf{(V1)}. 
\begin{lemma}\label{evsubadd}
Assume that $V_1(x)$ and $V_2(x)$  satisfy \textbf{(V1)}. Then there holds
\begin{itemize}
  \item[(i)] $e_{V}(\xi,\eta)\le e(\xi,\eta)\le0$; if $\xi+\eta>0$, then  $e_{V}(\xi,\eta)< e(\xi,\eta)<0$.
  \item[(ii)] every minimizing sequence for $e_{V}(\xi,\eta)$ is bounded in $H^1(\mathbb{R}^N)\times H^1(\mathbb{R}^N)$,
  \item[(iii)] $e_{V}(\xi,\eta)$ is continuous about $(\xi,\eta)$. Furthermore, $e_{V}(\xi,\eta)\le e_{V}(\xi-\xi_1,\eta-\eta_1)+e(\xi_1,\eta_1)$ for $\xi_1>0$ and $\eta_1>0$,
  \item[(iv)]$e_{V}(\xi,\eta)$ is nondecreasing with respect to $(\xi,\eta)$.
\end{itemize}
\end{lemma}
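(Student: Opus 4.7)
The plan is to treat the four items in order, using the attainment of $e(\xi,\eta)$ (from Theorem \ref{t1} in the coupled regime and from scalar Choquard theory in the degenerate cases), the $L^2$-subcritical estimate \eqref{bu}, and the decay $V_i\to 0$ at infinity. For part (i), the bound $e_V(\xi,\eta)\le e(\xi,\eta)$ is immediate from the pointwise bound $E_V\le E$ on $S_\xi\times S_\eta$ since $V_1,V_2<0$. To obtain $e(\xi,\eta)<0$ when $\xi+\eta>0$, I would pick nonnegative $(u,v)\in S_\xi\times S_\eta$ with $\int uv>0$ and apply the $L^2$-preserving dilation $u_t(x)=t^{N/2}u(tx)$, $v_t(x)=t^{N/2}v(tx)$: the kinetic and Choquard parts scale as $t^2$ and $t^{N(p-1)-\alpha}$ respectively, both positive powers in the subcritical regime, while $\int\beta u_t v_t=\beta\int uv$ is invariant, so $E(u_t,v_t)\to-\beta\int uv<0$ as $t\to 0^+$. (If $\xi=0$ or $\eta=0$ the problem collapses to a scalar Choquard, for which $m(c,\mu)<0$ by Lemma \ref{yt1}.) The strict inequality $e_V(\xi,\eta)<e(\xi,\eta)$ then comes from plugging a minimizer $(u_0,v_0)$ of $e(\xi,\eta)$ into $E_V$:
\[
E_V(u_0,v_0)-E(u_0,v_0)=\tfrac12\int V_1 u_0^2+\tfrac12\int V_2 v_0^2<0,
\]
strictly negative since $V_1,V_2<0$ pointwise and $(u_0,v_0)\not\equiv 0$.

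For (ii), continuity of $V_i$ together with $V_i\to 0$ at infinity gives $\|V_i\|_\infty<\infty$, hence $\int V_i u^2\ge-\|V_i\|_\infty\xi^2$ on $S_\xi$; inserting this into the coercivity computation of Lemma \ref{lbfb} gives the same conclusion up to an additive constant depending on $\xi,\eta,\|V_i\|_\infty$, and the $L^2$-constraint finishes the $H^1\times H^1$ bound on every minimizing sequence. For the continuity portion of (iii), linear rescaling of a near-minimizer from $S_\xi\times S_\eta$ into $S_{\xi_n}\times S_{\eta_n}$ gives $\limsup_n e_V(\xi_n,\eta_n)\le e_V(\xi,\eta)$, and the converse follows symmetrically using (ii). The subadditivity is the crux: given $\varepsilon>0$, take compactly supported $\varepsilon$-approximate minimizers $(\varphi_1,\psi_1)$ for $e_V(\xi-\xi_1,\eta-\eta_1)$ and $(\varphi_2,\psi_2)$ for $e(\xi_1,\eta_1)$ on $S_{\xi-\xi_1}\times S_{\eta-\eta_1}$ and $S_{\xi_1}\times S_{\eta_1}$ respectively, and form $u_n=\varphi_1+\varphi_2(\cdot-y_n)$, $v_n=\psi_1+\psi_2(\cdot-y_n)$ with $|y_n|\to\infty$. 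For large $n$ the supports are disjoint so $L^2$-constraints add exactly, the kinetic and local coupling terms decompose, the Choquard cross-interactions vanish via HLS together with the disjoint-support decay, and $\int V_i|\varphi_2(\cdot-y_n)|^2\to 0$ by $V_i\to 0$ at infinity; this produces $e_V(\xi,\eta)\le e_V(\xi-\xi_1,\eta-\eta_1)+e(\xi_1,\eta_1)+C\varepsilon$ and the claim after $\varepsilon\to 0$.

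Part (iv) is then immediate from (i) and (iii): for $\xi_1,\eta_1>0$,
\[
e_V(\xi,\eta)\le e_V(\xi-\xi_1,\eta-\eta_1)+e(\xi_1,\eta_1)<e_V(\xi-\xi_1,\eta-\eta_1),
\]
so $(\xi,\eta)\mapsto e_V(\xi,\eta)$ is strictly monotone in the mass parameters. I expect the main obstacle to be the subadditivity argument in (iii): one must verify rigorously that the nonlocal Choquard cross term $\iint|\varphi_1(x)|^p|\varphi_2(y-y_n)|^p/|x-y|^{N-\alpha}\,dx\,dy$ and its $\psi$-analogue vanish as $|y_n|\to\infty$, which requires a careful splitting of the integration domain exploiting the compact supports of $\varphi_j,\psi_j$ and the integrability of $|x-y|^{-(N-\alpha)}$ at infinity; the other pieces are routine refinements of the $V=0$ arguments already deployed in Section 3.
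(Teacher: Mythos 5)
Your argument follows the paper's proof essentially step for step: (i) via the pointwise comparison $E_{V}\le E$ on $S_{\xi}\times S_{\eta}$ together with attainment of $e(\xi,\eta)$, (ii) via boundedness of $V_{i}$ inserted into the coercivity estimate of Lemma \ref{lbfb}, (iii) via rescaling for continuity and translation of compactly supported near-minimizers (using $V_{i}\to 0$ at infinity) for subadditivity, and (iv) by combining (i) and (iii). The only minor differences are your explicit dilation argument for $e(\xi,\eta)<0$ (the paper instead relies on its earlier estimate \eqref{e2} and Lemma \ref{yt1}), your caution about the Choquard cross term, which in fact carries a favorable sign for disjoint supports so only an inequality rather than vanishing is needed, and the observation that what (i) and (iii) actually yield in (iv) is that $e_{V}$ is nonincreasing (strictly decreasing) in the masses, exactly as your displayed inequality shows.
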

\begin{proof}
(i) holds immediately from $V_1(x),~V_2(x)<0$. And the proof of  (ii) is similar to the proof of Lemma \ref{lbfb}. If (iii) holds, combining with (i), then (iv) holds.  Hence we only need to show that (iii) holds.

Assume that $|\xi_n-\xi|+|\eta_n-\eta|=o_n(1)$,  for any $\varepsilon>0$, there exists a sequence $\{(u_n,v_n)\}\in S_{\xi_n}\times S_{\eta_n}$  such that
\begin{align}\label{Eon}
 E_{V}(u_n,v_n)\le e_{V}(\xi_n,\eta_n)+o_n(1).
\end{align}
Let $\hat{u}=\frac{\xi}{\xi_n}u_n$ and $\hat{v}=\frac{\eta}{\eta_n}v_n$. Then $(\hat{u},\hat{v})\in S_{\xi}\times S_{\eta}$ and
\begin{align*}
E_{V}(\hat{u},\hat{v})
=&\frac{1}{2}\bigg(\frac{\xi^2}{\xi_n^2}\|\nabla u_n\|^2_2+\frac{\eta^2}{\eta_n^2}\|\nabla v_n\|^2_2\bigg)-\frac{\xi}{\xi_n}\frac{\eta}{\eta_n}\int_{\mathbb{R}^N}\beta u_nv_ndx\\
&+\frac12\int_{\mathbb{R}^N}\bigg(\frac{\xi^2}{\xi_n^2}V_1(x) |u_n|^2+\frac{\eta^2}{\eta_n^2}V_2(x) |v_n|^2\bigg)dx\\
&-\frac{\xi^{2p}}{\xi_n^{2p}}\frac{\mu_1}{2p}B(u_n,p)-\frac{\eta^{2q}}{\eta_n^{2q}}\frac{\mu_2}{2q}B(v_n,q),
\end{align*}
by \eqref{Eon} and $|\xi_n-\xi|+|\eta_n-\eta|=o_n(1)$, it follows that
\begin{align*}
e_{V}(\xi,\eta)\le E_{V}(\hat{u},\hat{v})=E_{V}(u_n,v_n)+o_n(1)\le e_{V}(\xi_n,\eta_n)+o_n(1),
\end{align*}
consequently, $e_{V}(\xi,\eta)\le e_{V}(\xi_n,\eta_n)+o_n(1)$. Vice versa. Hence $e_{V}(\xi,\eta)$ is continuous about $(\xi,\eta)$.

Next we show the subadditivity of $e_{V}(\xi,\eta)$, which is similar to Lemma \ref{lsubadd}.

For any $\varepsilon>0$, we can seek $(\varphi_{1\varepsilon},\psi_{1\varepsilon}),(\varphi_{1\varepsilon},\psi_{1\varepsilon})\in C_{c}(\mathbb{R}^N)\times C_{c}(\mathbb{R}^N)$ such that
\begin{align}\label{vpn}
\begin{split}
&(\varphi_{1\varepsilon},\psi_{1\varepsilon})\in S_{\xi-\xi_1}\times S_{\eta-\eta_1}, ~E_{V}(\varphi_{1\varepsilon},\psi_{1\varepsilon})\le e_{V}(\xi-\xi_1,\eta-\eta_1)+\varepsilon,\\
&(\varphi_{2\varepsilon},\psi_{2\varepsilon})\in S_{\xi_1}\times S_{\eta_1}, ~E(\varphi_{2\varepsilon},\psi_{2\varepsilon})\le e(\xi_1,\eta_1)+\varepsilon,
\end{split}
\end{align}
Set $(u_{\varepsilon,n}(x),v_{\varepsilon,n}(x)):=(\varphi_{1\varepsilon}(x)+\varphi_{2\varepsilon}(x-n\textbf{e}_1),\psi_{1\varepsilon}(x)+\psi_{2\varepsilon}(x-n\textbf{e}_1))$,
where $\textbf{e}_1=(1,0,...,0)$. Since $(\varphi_{1\varepsilon},\psi_{1\varepsilon})$ and $(\varphi_{2\varepsilon},\psi_{2\varepsilon})$ have compact support, there holds that
 $$(u_{\varepsilon,n}(x),v_{\varepsilon,n}(x))\in S_{\xi}\times S_{\eta}$$
for large $n$.  Therefore,
\begin{align*}
e_{V}(\xi,\eta)\le E_{V}(u_{\varepsilon,n}(x),v_{\varepsilon,n}(x))=E_{V}(\varphi_{1\varepsilon}(x),\psi_{1\varepsilon}(x))+E_{V}(\varphi_{2\varepsilon}(x-n\textbf{e}_1),\psi_{2\varepsilon}(x-n\textbf{e}_1)).
\end{align*}
Due to $\lim\limits_{|x|\rightarrow\infty}V_1(x)=\lim\limits_{|x|\rightarrow\infty}V_2(x)=0$, it means that $E_{V}(\varphi_{2\varepsilon}(x-n\textbf{e}_1),\psi_{2\varepsilon}(x-n\textbf{e}_1))\rightarrow E(\varphi_{2\varepsilon},\psi_{2\varepsilon})$ as $n\rightarrow\infty$. Together with \eqref{vpn}, there holds that
\begin{align*}
e_{V}(\xi,\eta)
\le & E_{V}(\varphi_{1\varepsilon}(x),\psi_{1\varepsilon}(x))+E(\varphi_{2\varepsilon},\psi_{2\varepsilon})\le e_{V}(\xi-\xi_1,\eta-\eta_1)+e(\xi_1,\eta_1)+2\varepsilon
\end{align*}
for any $\varepsilon>0$.  Due to the arbitrary of $\varepsilon>0$, hence (iii) holds.
\end{proof}

\begin{lemma}\label{sub-lsplit}
Let $\{(u_n,v_n)\}\subset S_{\xi}\times S_{\eta}$ be a minimizing sequence for $e_{V}(\xi,\eta)$ and there exists $(u_0,v_0)\in H^1(\mathbb{R}^N)\times H^1(\mathbb{R}^N)$  such that $(u_n,v_n)\rightharpoonup (u_0,v_0)$ in $H^1(\mathbb{R}^N)\times H^1(\mathbb{R}^N) $.  Set $\xi_1=\|u_0\|_2,~\eta_1=\|v_0\|_2$. If $\xi_1^2+\eta_1^2<\xi^2+\eta^2$, then there exist $\{y_n\}\subset\mathbb{R}^N$ and $(\varphi_0,\psi_0)\in H^1(\mathbb{R}^N)\times H^1(\mathbb{R}^N)\setminus{(0,0)}$ such that
\begin{align}\label{yninfty}
\left\{\begin{aligned}
&|y_n|\rightarrow\infty,\\
&(u_n(\cdot+y_n),v_n(\cdot+y_n))\rightharpoonup(\varphi_0,\psi_0)\text{~weakly in~}H^1(\mathbb{R}^N)\times H^1(\mathbb{R}^N),
\end{aligned}
\right.\text{~~as~}n\rightarrow\infty,
\end{align}
and
\begin{align}\label{unu0vnv0}
\lim_{n\rightarrow\infty}\|u_n-u_0-\varphi_0(\cdot-y_n)\|_2^2=0,~\lim_{n\rightarrow\infty}\|v_n-v_0-\psi_0(\cdot-y_n)\|_2^2=0,
\end{align}
where $\|\varphi_0\|_2^2=\xi_2^2=\xi^2-\xi_1^2,~\|\psi_0\|_2^2=\eta_2^2=\eta^2-\eta_1^2$. Furthermore,
\begin{align}\label{eveta}
e_{V}(\xi,\eta)=e_{V}(\xi_1,\eta_1)+e(\xi_2,\eta_2),
\end{align}
and
\begin{align}\label{ju0}
E_{V}(u_0,v_0)=e_{V}(\xi_1,\eta_1),~E(\varphi_0,\psi_0)=e(\xi_2,\eta_2).
\end{align}
\end{lemma}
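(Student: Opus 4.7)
The plan is to apply a Lions-type concentration-compactness argument to the residual sequence $(w_n,z_n):=(u_n-u_0,v_n-v_0)$, extract a single translating bubble $(\varphi_0,\psi_0)$, and close the loop with the strict subadditivity of $e$ in Remark \ref{r31}. The Br\'ezis--Lieb lemma for the $L^2$-norm gives $\|w_n\|_2^2\to\xi_2^2$ and $\|z_n\|_2^2\to\eta_2^2$ with $\xi_2^2+\eta_2^2>0$. The preliminary energy decomposition I would establish is
\begin{align*}
E_V(u_n,v_n)=E_V(u_0,v_0)+E(w_n,z_n)+o_n(1),
\end{align*}
combining Br\'ezis--Lieb for $\|\nabla\cdot\|_2^2$, Lemma \ref{lbl} for the Choquard terms, weak $L^2$-convergence $(w_n,z_n)\rightharpoonup(0,0)$ for the cross terms in $\int\beta u_nv_n\,dx$, and the decay $V_i\to 0$ together with the compact embedding $H^1(B_R)\hookrightarrow L^2(B_R)$ to kill $\int V_i w_n^2\,dx$.

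I would next rule out vanishing of $(w_n,z_n)$ in Lions' sense. If $\sup_{y\in\mathbb{R}^N}\int_{B_1(y)}(|w_n|^2+|z_n|^2)\,dx\to 0$, then $w_n,z_n\to 0$ in every $L^r$ with $r\in(2,2^*)$, so by \eqref{bu} we get $B(w_n,p),B(z_n,q)\to 0$ and $\liminf E(w_n,z_n)\ge-\beta\xi_2\eta_2$. In the $L^2$-subcritical regime a scaling test function $(\sigma^{N/2}\xi_2\phi(\sigma\cdot),\sigma^{N/2}\eta_2\phi(\sigma\cdot))$ with small $\sigma>0$ shows $e(\xi_2,\eta_2)<-\beta\xi_2\eta_2$ whenever $\xi_2\eta_2>0$ (because $2p\delta_p<2$ makes the nonlocal term dominate the gradient as $\sigma\to 0$), which combined with Lemma \ref{evsubadd}(iii) contradicts $e_V(\xi,\eta)=E_V(u_0,v_0)+\liminf E(w_n,z_n)$. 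The degenerate case $\xi_2\eta_2=0$ (say $\eta_2=0$) uses $\int w_nz_n\,dx\le\|w_n\|_2\|z_n\|_2\to 0$ and $e(\xi_2,0)=m(\xi_2,\mu_1)<0$ from Lemma \ref{yt1}(i) in the same way. Hence Lions produces $\{y_n\}\subset\mathbb{R}^N$ and $\delta>0$ with $\int_{B_1(y_n)}(|w_n|^2+|z_n|^2)\,dx\ge\delta$, and, along a subsequence, $(w_n(\cdot+y_n),z_n(\cdot+y_n))\rightharpoonup(\varphi_0,\psi_0)\ne(0,0)$ in $H^1\times H^1$. The assumption $(w_n,z_n)\rightharpoonup(0,0)$ in $H^1$, combined with continuity of translation in the weak $L^2$ topology, forces $|y_n|\to\infty$, which is \eqref{yninfty}.

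Setting $(\tilde{w}_n,\tilde{z}_n):=(w_n-\varphi_0(\cdot-y_n),z_n-\psi_0(\cdot-y_n))$, a second round of Br\'ezis--Lieb (for $L^2$, $\|\nabla\cdot\|_2^2$, and Lemma \ref{lbl} for the Choquard part) together with translation invariance of $E$ and the fact that $|y_n|\to\infty$ makes $\int V_i(x)\varphi_0^2(x-y_n)\,dx\to 0$ (by dominated convergence, since $V_i\to 0$) yields
\begin{align*}
E_V(u_n,v_n)=E_V(u_0,v_0)+E(\varphi_0,\psi_0)+E(\tilde{w}_n,\tilde{z}_n)+o_n(1),
\end{align*}
with $\|\tilde{w}_n\|_2^2\to\xi_2^2-\xi_2'^2=:\xi_3^2$ and $\|\tilde{z}_n\|_2^2\to\eta_3^2$, where $\xi_2':=\|\varphi_0\|_2\le\xi_2$, $\eta_2':=\|\psi_0\|_2\le\eta_2$. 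Rescaling $(\tilde{w}_n,\tilde{z}_n)$ to have exact mass $(\xi_3,\eta_3)$ and using continuity of $e$ yields $\liminf E(\tilde{w}_n,\tilde{z}_n)\ge e(\xi_3,\eta_3)$, and then
\begin{align*}
e_V(\xi,\eta)\ge e_V(\xi_1,\eta_1)+e(\xi_2',\eta_2')+e(\xi_3,\eta_3)\ge e_V(\xi_1,\eta_1)+e(\xi_2,\eta_2),
\end{align*}
the second inequality from Lemma \ref{lsubadd}. The reverse inequality is Lemma \ref{evsubadd}(iii), so every inequality collapses to equality; this gives \eqref{eveta} and forces both $E_V(u_0,v_0)=e_V(\xi_1,\eta_1)$ and $E(\varphi_0,\psi_0)=e(\xi_2',\eta_2')$. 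Since $e(\xi_2,\eta_2)$ is attained (Theorem \ref{t1}), Remark \ref{r31} gives the strict inequality $e(\xi_2,\eta_2)<e(\xi_2',\eta_2')+e(\xi_3,\eta_3)$ whenever $(\xi_3,\eta_3)\ne(0,0)$, which contradicts the equality just obtained. Hence $\xi_2'=\xi_2$, $\eta_2'=\eta_2$, so $\|\tilde{w}_n\|_2,\|\tilde{z}_n\|_2\to 0$, which is \eqref{unu0vnv0}, and \eqref{ju0} is the remaining equality.

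The main obstacle will be the careful bookkeeping in the two Br\'ezis--Lieb decompositions, particularly the vanishing of the translated cross terms such as $\int u_0(x)\varphi_0(x-y_n)\,dx$ and of the coupling $\int\beta u_0\psi_0(\cdot-y_n)\,dx$ when $|y_n|\to\infty$, along with verifying that Remark \ref{r31} applies in the possibly degenerate configurations where exactly one of $\xi_3,\eta_3$ vanishes; in such cases one falls back on the strict subadditivity of the scalar problem (Lemma \ref{yt1}(i)) applied in the nonzero component, after observing that the degenerate infimum reduces to the scalar $m(\cdot,\mu_i)$.
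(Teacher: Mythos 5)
Your proposal is correct and follows essentially the same route as the paper: rule out vanishing of the residual by showing $e(\xi_2,\eta_2)<-\beta\xi_2\eta_2$ (the paper imports this from \eqref{e2} via Lemma \ref{yt1} rather than your scaling test function, but the content is identical), extract a single translating bubble, and then use a second Br\'ezis--Lieb decomposition together with the strict subadditivity of Remark \ref{r31} and Lemma \ref{evsubadd}(iii) to force the remainder mass to vanish and collapse all inequalities to the identities \eqref{eveta}--\eqref{ju0}. The bookkeeping issues you flag (translated cross terms, degenerate mass configurations) are handled in the paper exactly as you anticipate.
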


\begin{proof} We divide the proof into three steps.

 \textbf{Step 1: We find $\{y_{n}\}\subset\mathbb{R}^{N}$ and $(\varphi_0,\psi_{0})$ such that \eqref{yninfty} holds}.

We claim that
\begin{align*}
\delta_{0}:=\liminf _{n \rightarrow \infty} \sup _{y \in \mathbb{R}^{N}} \int_{y+B(0,1)}[|u_{n}-u_{0}|^2+|v_{n}-v_{0}|^2] dx>0,
\end{align*}
where $B(0,1):=\{x \in \mathbb{R}^{N}:|x|\le1\}$. The claim can be shown by contradiction. Suppose that  $\delta_{0}=0$. Then $(u_n,v_n)\to (u_0,v_0)$ strongly in $L^{s}(\mathbb{R}^N)\times L^{t}(\mathbb{R}^N)$ for $s,t\in(2,2^{*})$ (see \cite{W96}). By \eqref{bu}, we have $B(u_n,p)\to B(u_0,p)$ and $B(v_n,q)\to B(v_0,q)$. Since $(u_n,v_n)\rightharpoonup (u_0,v_0)$ weakly in $H^1(\mathbb{R}^N)\times H^1(\mathbb{R}^N)$, we can verify that $$\int_{\mathbb{R}^{N}}(V_1(x)(u_{n}-u_{0})^2+V_2(x)(v_{n}-v_{0})^2)dx\rightarrow 0.$$
Thus,
\begin{align*}
e_{V}(\xi,\eta)&=E_{V}(u_n,v_n)+o(1) \\
&\ge
E_{V}(u_0,v_0)+E_{V}(u_n-u_0,v_n-v_0)+o(1) \\
&=E_{V}(u_0,v_0)+\frac{1}{2}(\|\nabla(u_n-u_0)\|_2^2+\|\nabla(v_n-v_0)\|_2^2)\\
&~~-\int_{\mathbb{R}^{N}}\beta(u_n-u_0)(v_n-v_0)dx+o(1)\\
&\ge E_{V}(u_0,v_0)-\beta\|(u_n-u_0)\|_2\|(v_n-v_0)\|_2\\
&=E_{V}(u_0,v_0)-\beta\xi_2\eta_2,
\end{align*}
together with \eqref{e2}, we can obtain that
\begin{align*}
e_{V}(\xi,\eta)> E_{V}(u_0,v_0)+e(\xi_2,\eta_2)+o(1)\ge e_{V}(\xi_1,\eta_1)+e(\xi_2,\eta_2) ,
\end{align*}
which is a contradiction with Lemma \ref{evsubadd} (iii). Hence $\delta_0>0$.

From $\delta_0>0$ and $(u_n,v_n)\rightarrow (u_0,v_0)$ in $L_{loc}^{2}(\mathbb{R}^{N})\times L_{loc}^{2}(\mathbb{R}^{N})$, we can find a sequence $\{y_n\} \subset \mathbb{R}^{N}$ such that
\begin{align*}
\int_{y_{n}+B(0,1)}(|u_n-u_0|^{2}+|v_n-v_0|^{2})dx\rightarrow c_0>0~\text{ and}~~ |y_{n}| \rightarrow \infty,\text{~as~}n\to\infty.
\end{align*}
Denote that $(u_n(\cdot+y_n),v_n(\cdot+y_n))\rightharpoonup(\varphi_0,\psi_0)$ weakly in $H^1(\mathbb{R}^N)\times H^1(\mathbb{R}^N)$.  Due to $c_{0}>0$, $(\varphi_0,\psi_0)\neq(0,0)$. Therefore, $\{y_{n}\}$ and $(\varphi_0,\psi_0)$ satisfy \eqref{yninfty}.

Since $y_{n} \rightarrow \infty$, we have
\begin{align*}
&\|u_n-u_0-\varphi_0(\cdot-y_n)\|_2^2\\
 =&\|u_n\|_2^2+\|u_0\|_2^2+\|\varphi_0\|_2^2-2\langle u_n, u_0\rangle_{L^2}-2\langle u_n(\cdot+y_n), \varphi_0\rangle_{L^2}+o(1) \\
=&\|u_n\|_2^2-\|u_0\|_2^2-\|\varphi_0\|_2^2+o(1),
\end{align*}
and
\begin{align*}
&\|v_n-v_0-\psi_0(\cdot-y_n)\|_2^2 \\
=&\|v_n\|_2^2+\|v_0\|_2^2+\|\psi_0\|_2^2-2\langle v_n, v_0\rangle_{L^2}-2\langle v_n(\cdot+y_n), \psi_0\rangle_{L^2}+o(1) \\
=&\|v_n\|_2^2-\|v_0\|_2^2-\|\psi_0\|_2^2+o(1),
\end{align*}
hence it yields that
\begin{align*}
&\xi_2^2:=\|\varphi_0\|_2^2\le\liminf_{n\rightarrow\infty}(\|u_n\|^2_2-\|u_0\|^2_2)=\xi^2-\xi_1^2,\\
&\eta_2^2:=\|\psi_0\|_2^2\le\liminf_{n\rightarrow\infty}(\|u_n\|^2_2-\|u_0\|^2_2)=\eta^2-\eta_1^2.
\end{align*}
From $c_0>0$, it follows that $\xi_2^2+\eta_2^2>0$.

\textbf{Step 2: We show $\{y_{n}\} \subset \mathbb{R}^{N}$ and $(\varphi_0,\psi_0)$ such that \eqref{unu0vnv0} holds}.

Let $\delta_1^2=\lim\limits_{n\rightarrow\infty}\|u_n-u_0-\varphi_0(\cdot-y_n)\|_2^2$ and $\delta_2^2=\lim\limits_{n\rightarrow\infty}\|v_n-v_0-\psi_0(\cdot-y_n)\|_2^2$. Then $\delta_1^2=\xi^2-\xi_1^2-\xi_2^2$ and $\delta_2^2=\eta^2-\eta_1^2-\eta_2^2$. We next show that $\delta_1=0$ and $\delta_2=0$ by contradiction. Suppose that $\delta_1^2+\delta_2^2>0$. Then
\begin{align*}
&\quad\|\nabla u_n\|_2^2-\|\nabla u_0\|_2^2-\|\nabla\varphi_0\|_2^2-\|\nabla (u_n-u_0-\varphi_0(\cdot-y_n))\|_2^2\\
&=-2\|\nabla u_0\|_2^2-2\|\nabla\varphi_0\|_2^2+2\langle \nabla u_n, \nabla u_0\rangle+2\langle \nabla u_n(\cdot+y_n), \nabla \varphi_0\rangle\\
&=o(1).
\end{align*}
Similarly,
\begin{align*}
&\quad\|\nabla v_n\|_2^2-\|\nabla v_0\|_2^2-\|\nabla\psi_0\|_2^2-\|\nabla (v_n-v_0-\psi_0(\cdot-y_n))\|_2^2\\
&=-2\|\nabla v_0\|_2^2-2\|\nabla\psi_0\|_2^2+2\langle \nabla v_n, \nabla v_0\rangle+2\langle \nabla v_n(\cdot+y_n), \nabla \psi_0\rangle\\
&=o(1).
\end{align*}
Furthermore,
\begin{align*}
&\frac12\int_{\mathbb{R}^N}V_1(x)(|u_n|^2-| u_0|_2^2-|\varphi_0|^2-|(u_n-u_0-\varphi_0(\cdot-y_n))|^2)dx=o(1),\\
&\frac12\int_{\mathbb{R}^N}V_2(x)(|v_n|^2-| v_0|_2^2-|\psi_0|^2-|(v_n-v_0-\psi_0(\cdot-y_n))|^2)dx=o(1).
\end{align*}
In addition, by Br\'ezis-Lieb Lemma, there holds that
\begin{align*}
B(u_n,p)=&B(u_0,p)+B(u_n-u_0,p)+o(1)\\=&B(u_0,p)+B(\varphi_0,p)+B(u_n-u_0-\varphi_0(\cdot-y_n),p)+o(1),
\end{align*}
and
\begin{align*}
B(v_n,q)&=B(v_0,q)+B(v_n-v_0,q)+o(1)\\
=&B(v_0,q)+B(\psi_0,q)+B(v_n-v_0-\psi_0(\cdot-y_n),q)+o(1).
\end{align*}
As a consequence,
\begin{align*}
E_{V}(u_n,v_n)=&E_{V}(u_0,v_0)+E_{V}(\varphi_0,\psi_0)\\
&-E_{V}(u_n-u_0-\varphi_0(\cdot-y_n),v_n-v_0-\psi_0(\cdot-y_n))+o(1).
\end{align*}
Since $(u_n,v_n)\rightharpoonup (u_0,v_0)$ weakly in $H^1(\mathbb{R}^N)\times H^1(\mathbb{R}^N)$, $|y_n|\rightarrow\infty$ and $$\lim_{|x|\rightarrow\infty}V_1(x)=\lim_{|x|\rightarrow\infty}V_2(x)=0,$$ we have
\begin{align}\label{v1v20}
\int_{\mathbb{R}^N}V_1(x)|u_n-u_0-\varphi_0(x-y_n)|^2+V_2(x)|v_n-v_0-\psi_0(x-y_n)|^2dx\rightarrow 0.
\end{align}
Notice that
\begin{align*}
&E_{V}(u_n-u_0-\varphi_0(\cdot-y_n),v_n-v_0-\psi_0(\cdot-y_n))\\
=&E(u_n-u_0-\varphi_0(\cdot-y_n),v_n-v_0-\psi_0(\cdot-y_n))\\
&+\frac12\int_{\mathbb{R}^N}V_1(x)|u_n-u_0-\varphi_0(\cdot-y_n)|^2+V_2(x)|v_n-v_0-\psi_0(\cdot-y_n)|^2dx,
\end{align*}
together with \eqref{v1v20}, it yields that
\begin{align*}
&\liminf_{n\rightarrow\infty}E_{V}(u_n-u_0-\varphi_0(\cdot-y_n),v_n-v_0-\psi_0(\cdot-y_n))\\
=&\liminf_{n\rightarrow\infty}E(u_n-u_0-\varphi_0(\cdot-y_n),v_n-v_0-\psi_0(\cdot-y_n))\\
\ge &e(\delta_1,\delta_2),
\end{align*}
Similarly, we have
\begin{align*}
\liminf_{n\rightarrow\infty}E_{V}(\varphi_0(\cdot-y_n),\psi_0(\cdot-y_n))\ge e(\xi_2,\eta_2).
\end{align*}
Consequently,
\begin{align*}
e_{V}(\xi,\eta)\ge e_{V}(\xi_1,\eta_1)+e(\xi_2,\eta_2)+e(\delta_1,\delta_2).
\end{align*}
By Remark \ref{r31}, we have $e(\xi_2+\delta_1,\eta_2+\delta_2)<e(\xi_2,\eta_2)+e(\delta_1,\delta_2)$ for $\delta_1^2+\delta_2^2>0$. By Lemma \ref{evsubadd} (iii), it yields that
\begin{align*}
e_{V}(\xi,\eta)&\ge e_{V}(\xi_1,\eta_1)+e(\xi_2,\eta_2)+e(\delta_1,\delta_2)\\
&> e_{V}(\xi_1,\eta_1)+e(\xi_2+\delta_1,\eta_2+\delta_2)\\
&\ge e_{V}(\xi_1+\xi_2+\delta_1,\eta_1+\eta_2+\delta_2)\\
&=e_{V}(\xi,\eta),
\end{align*}
which is a contradiction. Hence $\delta_1^2+\delta_2^2=0$.

\textbf{Step 3: We show $\{y_{n}\} \subset \mathbb{R}^{N}$ and $(\varphi_0,\psi_0)$ satisfy \eqref{eveta} and \eqref{ju0}}.

Since $\delta_1=\delta_2=0$, we have
\begin{align*}
e_{V}(\xi,\eta)=\lim_{n\rightarrow\infty}E_{V}(u_n,v_n)
&=\lim_{n\rightarrow\infty}\big(E_{V}(u_0,v_0)+E_{V}(\varphi_0(\cdot+y_n),\psi_0(\cdot+y_n))\big)\\
&\ge E_{V}(u_0,v_0)+E(\varphi_0,\psi_0)\\
&\ge e_{V}(\xi_1,\eta_1)+e(\xi_2,\eta_2),
\end{align*}
together with  Lemma \ref{evsubadd} (iii), there holds that
$$
e_{V}(\xi,\eta)=e_{V}(\xi_1,\eta_1)+e(\xi_2,\eta_2),
$$
Furthermore, $E_{V}(u_0,v_0)=e_{V}(\xi_1,\eta_1)$ and $E(\varphi_0,\psi_0)=e(\xi_2,\eta_2)$. Thus we finish the proof.
\end{proof}

\begin{proof}[\textbf{Proof of Theorem \ref{tv1}}]
 Let $\{(u_n,v_n)\}$ be the  minimizing sequence for $e_{V}(\xi,\eta)$. We divide the proof into two steps.

\textbf{Step 1: $\{(u_n,v_n)\}$ is strongly convergent in $L^2(\mathbb{R}^N)\times L^2(\mathbb{R}^N)$}.

By  Lemma \ref{evsubadd} (ii), we can find that $\{(u_n,v_n)\}$ is bounded in $H^1(\mathbb{R}^N)\times H^1(\mathbb{R}^N)$. Hence there exists a $(u_0,v_0)\in H^1(\mathbb{R}^N)\times H^1(\mathbb{R}^N)$ such that
\begin{align*}
(u_n,v_n)\rightharpoonup (u_0,v_0) \text{ weakly in }H^1(\mathbb{R}^N)\times H^1(\mathbb{R}^N).
\end{align*}
Since $\{(u_n,v_n)\}$ is the minimizing sequence for $e_{V}(\xi,\eta)$, we have $dE_{V}|_{S_{\xi}\times S_{\eta}}(u_n,v_n)\rightarrow 0$. Moreover, there exist two sequences  $\lambda_{1n}\subset\mathbb{R}$ and $\lambda_{2n}\subset\mathbb{R}$ such that
\begin{align}\label{evd}
\begin{split}
&\int_{\mathbb{R}^N}(\nabla u_n\nabla \varphi+\nabla v_n\nabla \psi)dx+\int_{\mathbb{R}^N}(V_1(x)u_n\varphi+V_2(x)v_n\psi) dx\\
-&\int_{\mathbb{R}^N}(\mu_1(I_{\alpha}\star|u_n|^{p})|u_n|^{p-2}u_n\varphi+\mu_2(I_{\alpha}\star|v_n|^{q})|v_n|^{q-2}v_n\psi)dx\\
-&\int_{\mathbb{R}^N}\beta(x)(u_n\psi+v_n\varphi)dx+\lambda_{1n}\int_{\mathbb{R}^N}u_n\varphi dx+\lambda_{2n}\int_{\mathbb{R}^N}v_n\psi dx\\
=&o_{n}(1)\|(\varphi,\psi)\|_{H^1(\mathbb{R}^N)\times H^1(\mathbb{R}^N)}
\end{split}
\end{align}
for any $(\varphi,\psi)\in H^1(\mathbb{R}^N)\times H^1(\mathbb{R}^N)$ with $o_{n}(1)\rightarrow 0$ as $n\rightarrow\infty$.
Taking $(\varphi,\psi)=(u_n,0)$ and $(\varphi,\psi)=(0,v_n)$ in \eqref{evd}, we can derive that
\begin{align*}
\int_{\mathbb{R}^N}|\nabla u_n|^2dx+\int_{\mathbb{R}^N}V_1(x)|u_n|^2dx-\mu_1\int_{\mathbb{R}^N}(I_{\alpha}\star|u_n|^{p})|u_n|^{p} dx
-\int_{\mathbb{R}^N}\beta u_nv_ndx=-\lambda_{1n}\xi^2.
\end{align*}
and
\begin{align*}
\int_{\mathbb{R}^N}|\nabla v_n|^2dx+\int_{\mathbb{R}^N}V_2(x)|v_n|^2dx
-\mu_2\int_{\mathbb{R}^N}(I_{\alpha}\star|v_n|^{q})|v_n|^{q} dx
-\int_{\mathbb{R}^N}\beta u_nv_ndx=-\lambda_{2n}\eta^2.
\end{align*}
Since $\{(u_n,v_n)\}$ is bounded in $H^1(\mathbb{R}^N)\times H^1(\mathbb{R}^N)$,  by \eqref{bu}, $B(u_n,p)$ and $B(v_n,q)$ are bounded. Therefore, $\lambda_{1n}$ and $\lambda_{2n}$ are bounded in $\mathbb{R}$. Moreover, there exist $\lambda_1\in\mathbb{R}$ and $\lambda_2\in\mathbb{R}$ such that $\lambda_{1n}\rightarrow\lambda_1$ and $\lambda_{2n}\rightarrow\lambda_2$. Therefore, $(u_0,v_0)$ satisfies
\begin{align*}
\left\{\begin{aligned}
&-\Delta u_0+V_1(x)u_0+\lambda_1 u_0=\mu_1(I_{\alpha}\star|u_0|^p)|u_0|^{p-2}u_0+\beta v_0,\\
&-\Delta v_0+V_2(x)v_0+\lambda_2 v_0=\mu_2(I_{\alpha}\star|v_0|^q)|v_0|^{q-2}v_0+\beta u_0,
\end{aligned}
\right.
\end{align*}
with $\|u_0\|_2^2=\xi_1^2$ and $\|v_0\|_2^2=\eta_1^2$. If $\xi_1=\xi$ and $\eta_1=\eta$, then the step is finished. Otherwise, $\xi_1^2+\eta_1^2<\xi^2+\eta^2$, i.e., there are three cases:
\begin{align*}
\left\{\begin{aligned}
 &\textbf{Case 1: }\xi_1<\xi,~\eta_1<\eta,\\
 &\textbf{Case 2: }\xi_1=\xi,~\eta_1<\eta,\\
 &\textbf{Case 3: }\xi_1<\xi,~\eta_1=\eta.
\end{aligned}
\right.
\end{align*}

By Lemma \ref{sub-lsplit},  $(\varphi_0,\psi_0)$ satisfies
\begin{align}\label{varphi0psi0}
\left\{\begin{aligned}
&-\Delta \varphi_0+\lambda_1 \varphi_0=\mu_1(I_{\alpha}\star|\varphi_0|^p)|\varphi_0|^{p-2}\varphi_0+\beta \psi_0,\\
&-\Delta \psi_0+\lambda_2 \psi_0=\mu_2(I_{\alpha}\star|\psi_0|^q)|\psi_0|^{q-2}\psi_0+\beta \varphi_0,
\end{aligned}
\right.
\end{align}
with $\|\varphi_0\|_2^2=\xi^2-\xi_1^2=\xi_2^2$ and $\|\psi_0\|_2^2=\eta^2-\eta_1^2=\eta_2^2$.

For \textbf{Case 2}, it is easy to find $\|\varphi_0\|_2^2=\xi^2-\xi_1^2=0$ and $\|\psi_0\|_2^2=\eta^2-\eta_1^2>0$, which implies that $\varphi_0=0$ and $\psi_0\neq0$. As a consequence, $(0,\psi_0)$ is a solution of \eqref{varphi0psi0}, which is impossible for $\beta>0$. Hence  \textbf{Case 2} doesn't hold. Similarly, \textbf{Case 3} also doesn't hold.

For \textbf{Case 1}, take $(\sqrt{u_0^2+\varphi_0^2},\sqrt{v_0^2+\phi_0^2})\in S_{\xi}\times S_{\eta}$ as the test function,  direct computation shows that
\begin{align}\label{sqrt2p}
\left\{\begin{aligned}
&|\nabla \sqrt{u_0^2+\varphi_0^2}|^2\le|\nabla u_0|^2+|\nabla\varphi_0|^2,~|\nabla \sqrt{v_0^2+\psi_0^2}|^2\le|\nabla v_0|^2+|\nabla\psi_0|^2,\\
&|\sqrt{u_0^2+\varphi_0^2}|^p\ge|u_0|^p+|\varphi_0|^p,~|\sqrt{v_0^2+\phi_0^2}|^q\ge|v_0|^q|\phi_0|^q,\\
&\beta\sqrt{u_0^2+\varphi_0^2}\sqrt{v_0^2+\phi_0^2}\ge \beta u_0v_0+\beta \varphi_0\psi_0\text{~for~}\beta>0.
 \end{aligned}
 \right.
\end{align}
From the second inequality in \eqref{sqrt2p}, we can derive that
\begin{align}\label{bup}
\begin{split}
B(\sqrt{u_0^2+\varphi_0^2},p)&=\int_{\mathbb{R}^N}\int_{\mathbb{R}^N}\frac{|\sqrt{u_0^2+\varphi_0^2}(x)|^p|\sqrt{u_0^2+\varphi_0^2}(y)|^p}{|x-y|^{N-\alpha}}dydx\\
&\ge\int_{\mathbb{R}^N}\int_{\mathbb{R}^N}\frac{(|u_0(x)|^p+|\varphi_0(x)|^p)(|u_0(y)|^p+|\varphi_0(y)|^p)}{|x-y|^{N-\alpha}}dydx\\
&\ge\int_{\mathbb{R}^N}\int_{\mathbb{R}^N}\frac{|u_0(x)|^p|u_0(y)|^p}{|x-y|^{N-\alpha}}dydx+\int_{\mathbb{R}^N}\int_{\mathbb{R}^N}\frac{|\varphi_0(x)|^p|\varphi_0(y)|^p}{|x-y|^{N-\alpha}}dydx\\
&=B(u_0,p)+B(\varphi_0,p),
\end{split}
\end{align}
similarly, there holds that
\begin{align}\label{bvq}
B(\sqrt{v_0^2+\psi_0^2},q)\ge B(v_0,q)+B(\psi_0,q).
\end{align}
By \eqref{sqrt2p}, \eqref{bup} and \eqref{bvq}, it yields that
\begin{align}\label{evtest}
\begin{split}
e_{V}(\xi,\eta)&\le E_{V}(\sqrt{u_0^2+\varphi_0^2},\sqrt{v_0^2+\psi_0^2})\\
&\le E_{V}(u_0,v_0)+E_{V}(\varphi_0,\psi_0)\\
&=E_{V}(u_0,v_0)+E(\varphi_0,\psi_0)+\frac12\int_{\mathbb{R}^N}[V_1(x)|\varphi_0|^2+V_2(x)|\psi_0|^2]dx.
\end{split}
\end{align}
Notice that
\begin{align}\label{V0}
\int_{\mathbb{R}^N}(V_1(x)|\varphi_0|^2+V_2(x)|\psi_0|^2)dx<0.
\end{align}
From Lemma \ref{sub-lsplit}, \eqref{evtest} and \eqref{V0}, it can be derived that
\begin{align*}
 e_{V}(\xi,\eta)&\le E_{V}(u_0,v_0)+E(\varphi_0,\psi_0)+\frac12\int_{\mathbb{R}^N}(V_1(x)|\varphi_0|^2+V_2(x)|\psi_0|^2)dx\\
 &<E_{V}(u_0,v_0)+E(\varphi_0,\psi_0)\\
 &=e_{V}(\xi_1,\eta_1)+e(\xi_2,\eta_2)\\
 &=e_{V}(\xi,\eta),
\end{align*}
which is impossible. Hence \textbf{Case 1} doesn't hold.  In conclusion, $\xi_1=\xi$ and $\eta_1=\eta$. Therefore, we complete the proof of \textbf{Step 1}.

\textbf{Step 2: $\{(u_n,v_n)\}$ is strongly convergent in $H^1(\mathbb{R}^N)\times H^1(\mathbb{R}^N)$}.

From \textbf{Step 1}, $(u_n,v_n)\rightarrow (u_0,v_0)$ strongly in $L^2(\mathbb{R}^N)\times L^2(\mathbb{R}^N)$. Hence $B(u_n,p)\rightarrow B(u_0,p)$ and $B(v_n,q)\rightarrow B(v_0,q)$  by \eqref{bu}. 

Notice that
$$
e_{V}(\xi,\eta)\le E_{V}(u_0,v_0)\le\liminf_{n}E_{V}(u_n,v_n)=e_{V}(\xi,\eta),
$$
which implies that
\begin{align*}
\|\nabla u_0\|_2^2+\|\nabla v_0\|_2^2
=\lim_{n\rightarrow\infty}\bigg\{\|\nabla u_n\|_2^2+\|\nabla v_n\|_2^2\bigg\}.
\end{align*}
Hence \textbf{Step 2} holds. Therefore, we establish the theorem.
\end{proof}

\subsection{Proof of Theorem \ref{tv1v2}}
In this subsection, we consider the case that $V_1(x)$ satisfies \textbf{(V1)} and $V_2(x)$ satisfies \textbf{(V2)}.  We work on the space $H^1(\mathbb{R}^N)\times \tilde{H}_2$.
\begin{lemma}\label{lscpt}
Assume that $V_2(x)$ satisfies \textbf{(V2)}. Then $\tilde{H}_2\hookrightarrow L^{p}(\mathbb{R}^N)(p\in[2,2^{*}))$ is compact.
\end{lemma}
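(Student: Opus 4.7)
The plan is to adapt the classical Rabinowitz compact embedding argument to the present weighted space. After a harmless shift, I may assume $V_2(x)\ge 1$ on $\mathbb{R}^N$ (since $V_2$ is continuous and $V_2(x)\to\infty$, it is bounded below; adding a constant changes $\tilde{H}_2$ by an equivalent norm and does not affect compactness of the embedding into $L^p$). Equip $\tilde{H}_2$ with the natural Hilbert norm
\[
\|u\|_{\tilde{H}_2}^{2}:=\int_{\mathbb{R}^N}\bigl(|\nabla u|^2+V_2(x)|u|^2\bigr)\,dx,
\]
so that $\tilde{H}_2\hookrightarrow H^1(\mathbb{R}^N)$ continuously.

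Let $\{u_n\}\subset\tilde{H}_2$ be bounded. First I would extract a subsequence (still denoted $u_n$) such that $u_n\rightharpoonup u$ weakly in $\tilde{H}_2$, hence weakly in $H^1(\mathbb{R}^N)$, with $u\in\tilde{H}_2$ by weak lower semicontinuity. By the standard Rellich--Kondrachov theorem applied on each ball $B_R(0)$, I obtain $u_n\to u$ strongly in $L^p(B_R(0))$ for every $p\in[2,2^\ast)$ and every $R>0$.

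The key step, and the only nontrivial point, is controlling the tail uniformly in $n$. Given $\varepsilon>0$, use the hypothesis $V_2(x)\to\infty$ to choose $R>0$ so large that $V_2(x)\ge 1/\varepsilon$ for $|x|\ge R$. Then
\[
\int_{|x|\ge R}|u_n-u|^{2}\,dx\;\le\;\varepsilon\int_{|x|\ge R}V_2(x)|u_n-u|^{2}\,dx\;\le\;\varepsilon\,\|u_n-u\|_{\tilde{H}_2}^{2}\;\le\;C\varepsilon,
\]
uniformly in $n$, where $C$ bounds $\|u_n-u\|_{\tilde{H}_2}^2$. Combined with the local strong convergence, this yields $u_n\to u$ strongly in $L^2(\mathbb{R}^N)$, proving the case $p=2$.

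For $p\in(2,2^\ast)$, I would interpolate: by the Sobolev embedding $H^1(\mathbb{R}^N)\hookrightarrow L^{2^\ast}(\mathbb{R}^N)$ (with the obvious modification in low dimensions, using any $L^{r}$ with $r>p$), the sequence $\{u_n-u\}$ is bounded in $L^{2^\ast}(\mathbb{R}^N)$. Writing $p=2\theta+2^\ast(1-\theta)$ for a suitable $\theta\in(0,1)$ and applying H\"older's inequality gives
\[
\|u_n-u\|_{p}\;\le\;\|u_n-u\|_{2}^{\theta}\,\|u_n-u\|_{2^\ast}^{1-\theta}\longrightarrow 0.
\]
The main (and only) obstacle is the tail estimate; once the weight $V_2(x)\to\infty$ is used in exactly the step above, everything else is routine compactness plus interpolation.
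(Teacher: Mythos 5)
Your argument is correct and is the standard proof of this compact embedding; the paper itself offers no proof of Lemma \ref{lscpt} (the analogous Lemma \ref{lcompact} is simply quoted from \cite{R93}), so your write-up fills in exactly the classical Rabinowitz/Reed--Simon argument: local Rellich--Kondrachov compactness plus the uniform tail estimate $\int_{|x|\ge R}|u_n-u|^2\le \varepsilon\int V_2|u_n-u|^2$ coming from $V_2(x)\to\infty$, followed by interpolation. One cosmetic slip: with your parametrization $p=2\theta+2^{*}(1-\theta)$ the Hölder inequality gives $\|u_n-u\|_{p}^{p}\le\|u_n-u\|_{2}^{2\theta}\|u_n-u\|_{2^{*}}^{2^{*}(1-\theta)}$, so the exponents in your final displayed inequality should be $2\theta/p$ and $2^{*}(1-\theta)/p$ (or else define $\theta$ by $\frac1p=\frac{\theta}{2}+\frac{1-\theta}{2^{*}}$); either way the conclusion $\|u_n-u\|_{p}\to 0$ stands.
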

Similar to Lemma \ref{lbfb}, we obtain that
\begin{align*}
E_{V}(u,v)\ge&\frac12(\|\nabla u\|_2^2+\|\nabla v\|_2^2)
-\frac{\mu_1C(N,\alpha,p)\xi^{2p(1-\delta_{p})}}{2p}\|\nabla u\|_2^{2p\delta_p}\\
&-\frac{\mu_2C(N,\alpha,q)\eta^{2q(1-\delta_{q})}}{2q}\|\nabla v\|_2^{2q\delta_q}
-\beta \xi\eta+\frac12\int_{\mathbb{R}^N}(V_1(x)|u|^2+V_2(x)|v|^2)dx.
\end{align*}
Since $\max\{1+\frac{\alpha}{N},2\}<p,q<1+\frac{\alpha+2}{N}$ and $(u,v)\in H^1(\mathbb{R}^N)\times\tilde{H_{2}}$, we have
$E_{V}(u,v)$ is also bounded from below. Moreover, there exists a bounded minimizing sequence $\{(u_n,v_n)\}\in H^1(\mathbb{R}^N)\times\tilde{H_{2}}$ with $\|u_n\|_2=\xi,~\|v_n\|_2=\eta$. Therefore, there exists a $(u_0,v_0)\in H^1(\mathbb{R}^N)\times\tilde{H_{2}}$ such that $(u_n,v_n)\rightharpoonup(u_0,v_0)$ weakly in $H^1(\mathbb{R}^N)\times\tilde{H_{2}}$.  Furthermore, by Lemma \ref{lscpt}, $\|v_0\|_2^2=\eta$. 
\begin{lemma}\label{lssub}
There holds that $e_{V}(\xi,\eta)\le e_{V}(\xi_1,\eta)+m(\xi-\xi_1,\mu_1)$, where $m(\xi-\xi_1,\mu_1)$ is defined in \eqref{mcnc}.
\end{lemma}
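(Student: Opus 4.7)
The plan is to mimic the strategy used in the proof of Lemma \ref{evsubadd}(iii): build a competitor in $S_\xi \times S_\eta$ by gluing a near-minimizer of $e_V(\xi_1, \eta)$ to a far-translated near-minimizer of the scalar problem $m(\xi - \xi_1, \mu_1)$, so that in the limit the two pieces decouple and the energies add. The translation sends the $w$-piece to infinity, where $V_1 \to 0$ by (V1), and this is what makes the glued test function's $V_1$-potential energy reduce to that of the fixed piece alone.

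More precisely, for any $\varepsilon > 0$ I would first pick compactly supported $(\varphi, \psi) \in S_{\xi_1} \times S_\eta$ with $E_V(\varphi, \psi) \le e_V(\xi_1, \eta) + \varepsilon$, and a compactly supported $w \in S_{\xi - \xi_1}$ with $F_{\mu_1}(w) \le m(\xi-\xi_1, \mu_1) + \varepsilon$; existence follows from density of $C_c^\infty$ together with continuity of both functionals on their constraint sets. Then I set $u_n(x) := \varphi(x) + w(x - n \mathbf{e}_1)$ and $v_n := \psi$; for $n$ large enough, $\mathrm{supp}(\varphi)$ and $\mathrm{supp}(w(\cdot - n\mathbf{e}_1))$ are disjoint, so $(u_n, v_n) \in S_\xi \times S_\eta$ with the same mass-bookkeeping convention as in Lemma \ref{evsubadd}(iii).

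Next I would verify $E_V(u_n, v_n) \to E_V(\varphi, \psi) + F_{\mu_1}(w)$ as $n \to \infty$. The kinetic terms split exactly by disjointness. Since $V_1(x) \to 0$ at infinity,
\begin{equation*}
\int_{\mathbb{R}^N} V_1(x) |w(x - n\mathbf{e}_1)|^2 \, dx = \int_{\mathbb{R}^N} V_1(y + n\mathbf{e}_1) |w(y)|^2 \, dy \to 0
\end{equation*}
by dominated convergence, while $V_2$ only acts on $v_n = \psi$ and is unchanged. The coupling $\int \beta u_n v_n \, dx$ reduces to $\int \beta \varphi \psi \, dx$ for $n$ large, because $w(\cdot - n\mathbf{e}_1)$ and $\psi$ then have disjoint supports. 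For the Choquard term, disjointness yields $|u_n|^p = |\varphi|^p + |w(\cdot - n\mathbf{e}_1)|^p$ pointwise, hence
\begin{equation*}
B(u_n, p) = B(\varphi, p) + B(w, p) + 2\int_{\mathbb{R}^N}\int_{\mathbb{R}^N}\frac{|\varphi(x)|^p \, |w(y - n\mathbf{e}_1)|^p}{|x-y|^{N-\alpha}} \, dy \, dx,
\end{equation*}
and the cross term is $O(n^{-(N-\alpha)})$ since $|x-y|$ is bounded below by a constant multiple of $n$ on the product of the two compact supports.

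Combining everything gives $e_V(\xi, \eta) \le E_V(u_n, v_n) = E_V(\varphi, \psi) + F_{\mu_1}(w) + o_n(1) \le e_V(\xi_1, \eta) + m(\xi - \xi_1, \mu_1) + 2\varepsilon + o_n(1)$, and letting $n \to \infty$ and then $\varepsilon \downarrow 0$ closes the argument. The main obstacle is controlling the nonlocal Choquard cross term, since, unlike in a local problem, disjointness of supports does not automatically kill the interaction; but the polynomial decay of the Riesz kernel on compact supports dispatches it cleanly.
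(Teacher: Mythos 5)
Your proposal is correct and is essentially the argument the paper intends: the paper omits the proof of this lemma, saying only that it is ``similar to Lemma \ref{evsubadd} (iii)'', and your construction (gluing a compactly supported near-minimizer of $e_V(\xi_1,\eta)$ to a far-translated compactly supported near-minimizer $w$ of the scalar problem, using $V_1\to 0$ at infinity and the decay of the Riesz kernel to decouple the energies) is exactly that gluing argument written out, with the nonlocal cross term correctly handled. The only caveat, which you already flag, is the mass-bookkeeping convention ($\|u_n\|_2^2=\xi_1^2+\|w\|_2^2$ versus the literal $\xi=\xi_1+\|w\|_2$ in the statement), an ambiguity present in the paper's own Lemmas \ref{lsubadd} and \ref{evsubadd} as well.
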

\begin{proof}
The proof is similar to Lemma \ref{evsubadd} (iii). We omit the details.
\end{proof}
Similar to Lemma \ref{sub-lsplit}, we give the following splitting lemma.
\begin{lemma}\label{lssplit}
Let $\{(u_n,v_n)\}\subset S_{\xi}\times S_{\eta}$ be a minimizing sequence for $e_{V}(\xi,\eta)$ such that $(u_n,v_n)\rightharpoonup (u_0,v_0)$ in $H^1(\mathbb{R}^N)\times H^1(\mathbb{R}^N) $ and let $\xi_1=\|u_0\|_2,~\eta=\|v_0\|_2$. If $\xi_1<\xi$, then there exist $\{y_n\}\subset\mathbb{R}^N$ and $w_0\in H^1(\mathbb{R}^N)\setminus{0}$ such that
\begin{align}\label{syninfty}
|y_n|\rightarrow\infty,~~u_n(\cdot+y_n)\rightharpoonup w_0\quad\text{in~}H^1(\mathbb{R}^N),
\end{align}
\begin{align}\label{sunu0}
\lim_{n\rightarrow\infty}\|u_n-u_0-w_0(\cdot-y_n)\|_2^2=0,
\end{align}
and $\xi^2=\xi_1^2+\xi_2^2$, where $\|w_0\|_2=\xi_2$. Furthermore,
\begin{align}\label{seveta}
e_{V}(\xi,\eta)=e_{V}(\xi_1,\eta)+m(\xi_2,\mu_1),
\end{align}
and
\begin{align}\label{su0}
E_{V}(u_0,v_0)=e_{V}(\xi_1,\eta),~E(w_0,0)=F_{\mu_1}(w_0)=m(\xi_2,\mu_1).
\end{align}
\end{lemma}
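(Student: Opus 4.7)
The plan is to mirror the blueprint of Lemma \ref{sub-lsplit}, exploiting the crucial asymmetry provided by Lemma \ref{lscpt}: since $\tilde{H}_2 \hookrightarrow L^s(\mathbb{R}^N)$ is compact for $s\in[2,2^*)$, the $v$-component cannot leak mass, so the entire splitting happens in the $u$-component and the escaping profile $w_0$ will solve a scalar equation rather than a coupled one. Consequently Lemma \ref{yt1} becomes the natural replacement for Lemma \ref{lsubadd} in ruling out residual mass.

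First I would invoke Lemma \ref{lscpt} to get (along a subsequence) $v_n \to v_0$ strongly in $L^s$ for every $s\in[2,2^*)$, which forces $\|v_0\|_2=\eta$, and by \eqref{bu} also $B(v_n,q)\to B(v_0,q)$. Setting $\tilde{u}_n := u_n - u_0 \rightharpoonup 0$ in $H^1$ with $\|\tilde{u}_n\|_2^2 \to \xi^2-\xi_1^2 > 0$, I claim
\[
\delta_0 := \liminf_{n\to\infty}\,\sup_{y\in\mathbb{R}^N}\int_{y+B(0,1)} |\tilde{u}_n|^2\, dx > 0.
\]
If $\delta_0=0$, Lions' lemma gives $\tilde{u}_n\to 0$ in $L^{2Np/(N+\alpha)}$; Lemma \ref{lbl} then yields $B(u_n,p)\to B(u_0,p)$, while $V_1(x)\to 0$ at infinity and $v_n\to v_0$ in $L^2$ handle $\int V_1 u_n^2 \to \int V_1 u_0^2$ and $\int\beta u_n v_n \to \int\beta u_0 v_0$. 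Joint lower semicontinuity of $\tfrac12\|\nabla v\|_2^2+\tfrac12\int V_2 v^2$ at the weak limit (from the weak convergence in $\tilde{H}_2$ and $\|v_n\|_2=\|v_0\|_2$) plus lower semicontinuity of $\|\nabla\tilde u_n\|_2^2\ge 0$ then gives $e_V(\xi,\eta)\ge E_V(u_0,v_0)\ge e_V(\xi_1,\eta)$, contradicting Lemma \ref{lssub} combined with $m(\xi_2,\mu_1)<0$ from Lemma \ref{yt1}(i).

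Given $\delta_0>0$, pick $y_n$ with $\int_{y_n+B(0,1)}|\tilde{u}_n|^2\ge \delta_0/2$; the sequence must satisfy $|y_n|\to\infty$, otherwise the compact local embedding contradicts $\tilde{u}_n\rightharpoonup 0$. Passing to a subsequence, let $w_0$ be the weak $H^1$-limit of $\tilde{u}_n(\cdot+y_n)$; local compactness in $B(0,1)$ forces $w_0\neq 0$, which establishes \eqref{syninfty}. The standard orthogonality identity (as in Step 1 of Lemma \ref{sub-lsplit}) then yields
\[
\|u_n-u_0-w_0(\cdot-y_n)\|_2^2 = \|u_n\|_2^2 - \|u_0\|_2^2 - \|w_0\|_2^2 + o(1),
\]
so $\xi_2 := \|w_0\|_2$ satisfies $\xi_2\le\sqrt{\xi^2-\xi_1^2}$.

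The main obstacle, as in Lemma \ref{sub-lsplit}, is ruling out a positive residual $\delta^2 := \lim\|u_n-u_0-w_0(\cdot-y_n)\|_2^2$. I plan to argue by contradiction via a double Br\'ezis-Lieb decomposition. Because $|y_n|\to\infty$, the translated profile $w_0(\cdot-y_n)$ sees $V_1\to 0$ in mean and decouples in the $L^2$-pairing against $v_0$ (hence against $v_n$ by strong $L^2$-convergence), so the energy splits as
\[
E_V(u_n,v_n) = E_V(u_0,v_0) + F_{\mu_1}(w_0) + F_{\mu_1}\bigl(u_n-u_0-w_0(\cdot-y_n)\bigr) + o(1).
\]
Combined with $F_{\mu_1}(w_0)\ge m(\xi_2,\mu_1)$ and $F_{\mu_1}(u_n-u_0-w_0(\cdot-y_n))\ge m(\delta,\mu_1)+o(1)$, this gives $e_V(\xi,\eta)\ge e_V(\xi_1,\eta)+m(\xi_2,\mu_1)+m(\delta,\mu_1)$. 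If $\delta>0$, the strict subadditivity of $m$ (Lemma \ref{yt1}(i)(2)) together with Lemma \ref{lssub} produces
\[
e_V(\xi,\eta) > e_V(\xi_1,\eta) + m(\xi_2+\delta,\mu_1) \ge e_V(\xi,\eta),
\]
a contradiction. Hence $\delta=0$, which upgrades all intermediate inequalities to equalities and simultaneously yields $\xi^2=\xi_1^2+\xi_2^2$, \eqref{seveta}, and \eqref{su0}.
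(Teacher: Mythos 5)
Your proposal is correct and follows essentially the same route as the paper: the same vanishing/non-vanishing dichotomy for $u_n-u_0$ (with the contradiction coming from Lemma \ref{lssub} and $m(\xi-\xi_1,\mu_1)<0$), the same extraction of $y_n$ and a nontrivial weak profile $w_0$, and the same Br\'ezis--Lieb energy splitting combined with the strict subadditivity of $m$ from Lemma \ref{yt1} to kill the residual mass $\delta$ and upgrade the inequalities to the equalities \eqref{seveta} and \eqref{su0}. The only differences are cosmetic (you invoke Lions' lemma explicitly where the paper just cites \eqref{bu}, and you front-load the compactness of $\tilde H_2\hookrightarrow L^s$ for the $v$-component, which the paper records just before the lemma).
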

\begin{proof}
We divide the proof into three steps.

\textbf{Step 1: We find $\{y_{n}\}\subset\mathbb{R}^{N}$ and $w_0$ such that \eqref{syninfty} holds}.

We claim that
\begin{align*}
\delta_{0}:=\liminf _{n \rightarrow \infty} \sup _{y \in \mathbb{R}^{N}} \int_{y+B(0,1)}|u_{n}-u_{0}|^2dx>0,
\end{align*}
where $B(0,1):=\{x \in \mathbb{R}^{N}:|x|\le1\}$. We prove the claim by contradiction. In fact, we may assume that  $\delta_{0}=0$. Then by \eqref{bu}, $B(u_n,p)\to B(u_0,p)$. Hence
\begin{align*}
e_{V}(\xi_1,\eta)\le E_{V}(u_0,v_0)\le\liminf_{n}E_{V}(u_n,v_n)=e_{V}(\xi,\eta).
\end{align*}
By Lemma \ref{yt1} and Lemma \ref{lssub}, there holds that
\begin{align*}
e_{V}(\xi_1,\eta)\le e_{V}(\xi,\eta)\le e_{V}(\xi_1,\eta)+m(\xi-\xi_1,\mu_1)<e_{V}(\xi_1,\eta),
\end{align*}
it is impossible. Hence $\delta_{0}>0$.

Furthermore, by the fact $u_n\rightarrow u_0$ in $L_{loc}^{2}(\mathbb{R}^{N})$, we can find $\{y_n\} \subset \mathbb{R}^{N}$ such that
\begin{align*}
\int_{y_{n}+B(0,1)}|u_n-u_0|^{2}dx\rightarrow c_0>0~\text{ and}~~ |y_{n}| \rightarrow \infty.
\end{align*}
Denote that $u_n(\cdot+y_n)\rightarrow w_0$ weakly in $H^1(\mathbb{R}^N)$.  Due to $c_{0}>0$, we can deduce that $w_0\neq0$. Therefore, $\{y_{n}\}$ and $w_0$ satisfy \eqref{syninfty}.

Since $y_{n} \rightarrow \infty$, we have
\begin{align*}
&\|u_n-u_0-w_0(\cdot-y_n)\|_2^2\\
=&\|u_n\|_2^2+\|u_0\|_2^2+\|w_0\|_2^2-2\langle u_n, u_0\rangle_{L^2}-2\langle u_n(\cdot+y_n), w_0\rangle_{L^2}+o(1) \\
=&\|u_n\|_2^2-\|u_0\|_2^2-\|w_0\|_2^2+o(1),
\end{align*}
in other words,
\begin{align*}
\xi_2^2:=\|w_0\|_2^2\le\liminf_{n\rightarrow\infty}(\|u_n\|^2_2-\|u_0\|^2_2)=\xi^2-\xi_1^2,
\end{align*}
From $c_0>0$, it follows that $\xi_2>0$.

\textbf{Step 2: We show $\{y_{n}\} \subset \mathbb{R}^{N}$ and $w_0$ such that \eqref{sunu0} holds}.

Let $\delta_1^2=\lim\limits_{n\rightarrow\infty}\|u_n-u_0-w_0(\cdot-y_n)\|_2^2$. Then $\delta_1^2=\xi^2-\xi_1^2-\xi_2^2$. Next we show that $\delta_1=0$ by contradiction. Indeed, if $\delta_1>0$, then
\begin{align*}
&\|\nabla u_n\|_2^2-\|\nabla u_0\|_2^2-\|\nabla w_0\|_2^2-\|\nabla (u_n-u_0-w_0(\cdot-y_n))\|_2^2\\
=&-2\|\nabla u_0\|_2^2-2\|\nabla w_0\|_2^2+2\langle \nabla u_n, \nabla u_0\rangle+2\langle \nabla u_n(\cdot+y_n), \nabla w_0\rangle\\
=&o(1).
\end{align*}
Furthermore,
\begin{align*}
&\frac12\int_{\mathbb{R}^N}V_1(x)(|u_n|^2-| u_0|_2^2-|w_0|^2-|(u_n-u_0-w_0(\cdot-y_n))|^2)dx=o(1).
\end{align*}
In addition, by Br\'ezis-Lieb Lemma, there holds that
\begin{align*}
B(u_n,p)&=B(u_0,p)+B(u_n-u_0,p)+o(1)\\
&=B(u_0,p)+B(\varphi_0,p)+B(u _n-u_0-\varphi_0(\cdot-y_n),p)+o(1),
\end{align*}
and
$$
\int_{\mathbb{R}^N}\beta u_nv_n=\int_{\mathbb{R}^N}\beta (u_n-u_0)(v_n-v_0)+\int_{\mathbb{R}^N}\beta u_0v_0=o(1)+\int_{\mathbb{R}^N}\beta u_0v_0.
$$
Therefore,
\begin{align*}
E_{V}(u_n,v_n)
\ge & E_{V}(u_0,v_0)+F_{\mu_1}(w_0(\cdot-y_n))+\frac12\int_{\mathbb{R}^N}V_1(x)|w_0(\cdot-y_n)|^2dx\\
&+F_{\mu_1}(u_n-u_0-w_0(\cdot-y_n))+\frac12\int_{\mathbb{R}^N}V_1(x)|u_n-u_0-w_0(\cdot-y_n)|^2dx.
\end{align*}
Since  $|y_n|\rightarrow\infty,~\lim\limits_{|x|\rightarrow\infty}V_1(x)=0$ and $V_1(x)<0$, we can conclude that
\begin{align*}
\int_{\mathbb{R}^N}V_1(x)|w_0(x-y_n)|^2dx\rightarrow 0\text{~and}~\int_{\mathbb{R}^N}V_1(x)|u_n-u_0-w_0(x-y_n)|^2dx\rightarrow 0,
\end{align*}
as $n\rightarrow\infty$, which implies  that
\begin{align*}
E_{V}(u_n,v_n)
\ge E_{V}(u_0,v_0)+F_{\mu_1}(w_0(\cdot-y_n))
+F_{\mu_1}(u_n-u_0-w_0(\cdot-y_n).
\end{align*}
Hence
\begin{align*}
e_{V}(\xi,\eta)\ge e_{V}(\xi_1,\eta)+m(\xi_2,\mu_1)+m(\delta_1,\mu_1).
\end{align*}

By Lemma \ref{yt1} and Lemma \ref{lssub}, we can see that
\begin{align*}
e_{V}(\xi,\eta)&= e_{V}(\xi_1,\eta)+m(\xi_2,\mu_1)+m(\delta_1,\mu_1)\\
&> e_{V}(\xi_1,\eta)+m(\xi_2+\delta_1,\mu_1)\\
&\ge e_{V}(\xi_1+\xi_2+\delta_1,\eta)\\
&=e_{V}(\xi,\eta),
\end{align*}
which is a contradiction. Hence $\delta_1=0$.

\textbf{Step 3: We show $\{y_{n}\} \subset \mathbb{R}^{N}$ and $(\varphi_0,\psi_0)$ satisfy \eqref{seveta} and \eqref{su0}}.

Since $\delta_1=0$, we have
\begin{align*}
e_{V}(\xi,\eta)&=\lim_{n\rightarrow\infty}E_{V}(u_n,v_n)\\
&\ge\liminf_{n\rightarrow\infty}\bigg(E_{V}(u_0,v_0)+F_{\mu_1}(w_0(\cdot-y_n))+\frac12\int_{\mathbb{R}^N}V_1(x)|w_0(\cdot-y_n)|^2dx\bigg)\\
&\ge E_{V}(u_0,v_0)+F_{\mu_1}(w_0)\\
&\ge e_{V}(\xi_1,\eta)+m(\xi_2,\mu_1),
\end{align*}
together with  Lemma \ref{lssub}, we can obtain that $e_{V}(\xi,\eta)=e_{V}(\xi_1,\eta)+m(\xi_2,\mu_1)$ and $E_{V}(u_0,v_0)=e_{V}(\xi_1,\eta),~F_{\mu_1}(w_0)=m(\xi_2,\mu_1)$. Thus we complete the proof.
\end{proof}

In the following, we give the $L^2-$convergence of $\{u_n\}$.
\begin{lemma}
$\|u_0\|_2=\xi$.
\end{lemma}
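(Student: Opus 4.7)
The plan is to argue by contradiction, mirroring the strategy used in \textbf{Case 1} of the proof of Theorem \ref{tv1}, with the splitting lemma \ref{lssplit} playing the role of Lemma \ref{sub-lsplit}. Suppose that $\xi_1 := \|u_0\|_2 < \xi$. Since $\|u_0\|_2^2 \le \liminf_n \|u_n\|_2^2 = \xi^2$ this is the only alternative to the desired conclusion. Recall that, thanks to Lemma \ref{lscpt}, $v_n \to v_0$ strongly in $L^2$, so we already know $\|v_0\|_2 = \eta$; also, because we may replace $u_n, v_n$ by $|u_n|, |v_n|$ (since $\beta > 0$), we may assume $u_0, v_0 \ge 0$, and the weak limit $w_0$ provided by Lemma \ref{lssplit} is likewise nonnegative.

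Applying Lemma \ref{lssplit} to the minimizing sequence gives $\{y_n\} \subset \mathbb{R}^N$ with $|y_n|\to\infty$, a nonnegative $w_0 \in H^1(\mathbb{R}^N) \setminus \{0\}$ with $\|w_0\|_2^2 = \xi_2^2 = \xi^2 - \xi_1^2 > 0$, and the energy identities
\begin{equation*}
e_V(\xi,\eta) = e_V(\xi_1,\eta) + m(\xi_2,\mu_1), \qquad E_V(u_0,v_0)=e_V(\xi_1,\eta), \qquad F_{\mu_1}(w_0) = m(\xi_2,\mu_1).
\end{equation*}
I then use $\bigl(\sqrt{u_0^2+w_0^2},\,v_0\bigr) \in S_\xi \times S_\eta$ as a test pair for $e_V(\xi,\eta)$. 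As in the proof of Theorem \ref{tv1}, the pointwise inequalities
\begin{equation*}
|\nabla \sqrt{u_0^2+w_0^2}|^2 \le |\nabla u_0|^2 + |\nabla w_0|^2, \qquad B\bigl(\sqrt{u_0^2+w_0^2},p\bigr) \ge B(u_0,p)+B(w_0,p),
\end{equation*}
together with $\sqrt{u_0^2+w_0^2}\,v_0 \ge u_0 v_0$ (which uses $u_0,w_0,v_0\ge 0$ and $\beta>0$), yield
\begin{equation*}
E_V\bigl(\sqrt{u_0^2+w_0^2},\,v_0\bigr) \le E_V(u_0,v_0) + F_{\mu_1}(w_0) + \tfrac12 \int_{\mathbb{R}^N} V_1(x)|w_0|^2\,dx.
\end{equation*}

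Now I invoke the strict sign hypothesis $V_1(x) < 0$ from \textbf{(V1)}: since $w_0 \not\equiv 0$, the remainder term $\tfrac12 \int V_1 |w_0|^2 \,dx$ is strictly negative. Combining the displayed inequality with the energy identities from the splitting lemma gives
\begin{equation*}
e_V(\xi,\eta) \le E_V\bigl(\sqrt{u_0^2+w_0^2},\,v_0\bigr) < E_V(u_0,v_0) + F_{\mu_1}(w_0) = e_V(\xi_1,\eta) + m(\xi_2,\mu_1) = e_V(\xi,\eta),
\end{equation*}
a contradiction. Therefore $\xi_1 = \xi$, i.e.\ $\|u_0\|_2 = \xi$.

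The only real obstacle is justifying the superposition estimate $B(\sqrt{u_0^2+w_0^2},p) \ge B(u_0,p)+B(w_0,p)$ together with the gradient and coupling inequalities; these are exactly the calculations carried out in \eqref{sqrt2p}--\eqref{bup} of the proof of Theorem \ref{tv1}, and they transfer verbatim because they are purely pointwise and do not involve the potential. The role of \textbf{(V1)} is only to produce the strict negativity of $\int V_1|w_0|^2$, which provides the gap that closes the contradiction; here it is essential that $V_1$ is strictly negative (not merely nonpositive).
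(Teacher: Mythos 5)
Your proof is correct and follows essentially the same route as the paper: both argue by contradiction via Lemma \ref{lssplit}, test with $\bigl(\sqrt{u_0^2+w_0^2},\,v_0\bigr)$, use the pointwise gradient, Choquard-superadditivity and coupling inequalities, and close the gap with the strict negativity of $\int_{\mathbb{R}^N} V_1(x)|w_0|^2\,dx$. Your write-up is in fact slightly more careful than the paper's in making the contradiction structure and the nonnegativity assumptions explicit.
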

\begin{proof}
Take $(\sqrt{u_0^2+w_0^2},v_0)\in S_{\xi}\times S_{\eta}$ as the test function, where $w_0$ is defined as in Lemma \ref{lssplit}. From $\beta\int_{\mathbb{R}^N}\sqrt{u_0^2+w_0^2}v_0dx\ge\beta\int_{\mathbb{R}^N}u_0v_0dx$ for $w_0\neq0$ and \eqref{bup}, we can derive that
\begin{align}\label{sevs}
\begin{split}
e_{V}(\xi,\eta)&\le E_{V}(\sqrt{u_0^2+w_0^2},v_0)\\
&\le E_{V}(u_0,v_0)+\frac12\|\nabla w_0\|_2^2-\frac{\mu_1}{2p}B(w_0,p)+\frac12\int_{\mathbb{R}^N}V_1(x)w_0^2dx,\\
\end{split}
\end{align}
due to $V_1(x)<0$, it means that
\begin{align*}
\int_{\mathbb{R}^N}V_1(x)w_0^2dx<0,
\end{align*}
together  with \eqref{sevs}, by Lemma \ref{lssplit}, it follows that
\begin{align*}
e_{V}(\xi,\eta)&< E_{V}(u_0,v_0)+\frac{1}{2}\|\nabla w_0\|_2^2-\frac{\mu_1}{2p}B(w_0,p)\\
&=e_{V}(\xi_1,\eta)+F_{\mu_1}(w_0)\\
&=e_{V}(\xi_1,\eta)+m(\xi_2,\mu_1)\\
&=e_{V}(\xi,\eta),
\end{align*}
which is a contradiction. Hence $\|u_0\|_2=\xi$. In conclusion,  $(u_n,v_n)\rightarrow(u_0,v_0)$ strongly in $L^2(\mathbb{R}^N)\times L^2(\mathbb{R}^N)$.
\end{proof}
Similar to the proof of Theorem \ref{t1}, we prove Theorem \ref{tv1v2}.
\begin{proof}[\textbf{Proof of Theorem \ref{tv1v2}}]
By the facts
\begin{align*}
\left\{\begin{aligned}
&(u_n,v_n)\rightharpoonup(u_0,v_0)\text{~weakly in~} H^1(\mathbb{R}^N)\times H^1(\mathbb{R}^N),\\ &(u_n,v_n)\rightarrow(u_0,v_0)\text{~strongly in~} L^2(\mathbb{R}^N)\times L^2(\mathbb{R}^N),
\end{aligned}
\right.
\end{align*}
and \eqref{bu}, it yields that
$B(u_n,p)\rightarrow B(u_0,p)$ and $B(v_n,q)\rightarrow B(v_0,q)$. Furthermore, it can be deduced that $(u_n,v_n)\rightarrow(u_0,v_0)$ strongly in $H^1(\mathbb{R}^N)\times H^1(\mathbb{R}^N)$. Hence we finish the proof.
\end{proof}

\end{document}